\theoremstyle{definition}
\theoremstyle{remark}
\theoremstyle{plain}
\newtheorem{theorem}{\sc Theorem}[section]
\newtheorem{lemma}{\sc Lemma}[section]
\newtheorem{proposition}[lemma]{\sc Proposition}
\newcommand*{\be}{\begin{equation}}
\newcommand*{\ee}{\end{equation}}
\newcommand*{\ba}{\begin{aligned}}
\newcommand*{\ea}{\end{aligned}}
\newcommand*{\nn}{\nonumber}
\def\gabar{\bar{\gamma}}
\def\altil{\tilde{\alpha}}
\def\rhotil{\tilde{\rho}}
\def\Xbar{{\bar{X}}}
\def\Ybar{{\bar{Y}}}
\def\betabar{{\bar{\beta}}}
\def\rhobar{{\bar{\rho}}}
\def\zetabar{{\bar{\zeta}}}
\def\ombar{{\bar{\omega}}}
\def\Gammabar{{\bar{\Gamma}}}
\def\qbar{{\bar{q}}}
\def\Mbar{{\bar{M}}}
\def\Zbar{{\bar{Z}}}
\def\Lbar{{\bar{L}}}
\def\wbar{{\bar{w}}}
\def\Pv{\mathbf{P}}
\def\Ev{\mathbf{E}}
\def\Tbar{{\bar{T}}}
\def\vtil{{\tilde{v}}}
\def\kD{\mathfrak{D}}
\def\Qtil{{\widetilde Q}}
\def\zbar{{\bar{z}}}
\def\gatil{{\tilde{\gamma}}}
\def\elltil{{\tilde{\ell}}}
\def\Mtil{{\widetilde{M}}}
\def\e{\varepsilon}
\def\PP{\mathbf{P}}   
\def\Fc{\mathcal{F}}  
\def\Ic{\mathcal{I}}  
\def\Hc{\mathcal{H}}
\def\cG{\mathcal{G}}
\def\Gc{\mathcal{G}}
\def\cP{\mathcal{P}}
\def\cona{b_0}
\def\conb{b}
\def\wiener{W}  
\def\btil{{\tilde{b}}}
\def\mtil{{\tilde{m}}}
\def\etatil{{\tilde{\eta}}}
\def\Etil{{\widetilde{E}}}
\def\Xtil{{\widetilde{X}}}
\newcommand*{\tfl}[1]{\lfloor{#1}\rfloor}
\def\Ac{\mathcal{A}}
\def\cS{\mathcal{S}}
\def\uhat{{\hat u}}
\def\mutil{{\tilde\mu}}
\def\nutil{{\tilde\nu}}
\def\kS{{\mathfrak S}}
\def\bbR{\mathbb R}
\def\bbN{\mathbb N}
\def\bbZ{\mathbb Z}
\def\bbQ{\mathbb Q}
\def\bbP{\mathbb P}
\def\bbE{\mathbb E}
\def\fQ{{{\rm Q}\kern-.65em {}^{{}_/ }\,}}
\def\fQQ{ {{\rm Q}\kern-.57em \scriptscriptstyle{}^{]\kern.055em[}\,}}
\def\one{{{\rm 1\mkern-1.5mu}\!{\rm I}}}
\def\w{\omega}
\def\vard{d_{\scriptscriptstyle\rm Var}}
\def\linf{\varliminf}
\def\lsup{\varlimsup}
\def\ord{\kern0.1em o\kern-0.02em{}_{\ds\breve{}}\kern0.1em}
\def\Ord{\kern0.1em O\kern-0.02em{\ds\breve{}}\kern0.1em}
\def\ds{\displaystyle}
\def\fmonth{\ifcase\month\or Jan\or Feb\or Mar\or Apr
\or May\or Jun\or Jul\or Aug\or Sep
\or Oct\or Nov\or Dec\fi\ }
\def\mmddyyyy{\the\month.\the\day.\the\year}
\def\ddmmyyyy{\the\day.\the\month.\the\year}
\def\Mddyyyy{\fmonth~\the\day,~\the\year}
\def\R{\bbR}
\def\N{\bbN}
\def\Z{\bbZ}
\def\Q{\bbQ}
\def\P{\bbP}
\def\E{\bbE}
\providecommand{\abs}[1]{\left\vert#1\right\vert}
\providecommand{\norm}[1]{\left\Vert#1\right\Vert}
\providecommand{\pp}[1]{\langle#1\rangle}
\numberwithin{equation}{section}
\begin{document}


\author[F.~Rassoul-Agha]{Firas~Rassoul-Agha$^1$}   
\thanks{$^1$Department of Mathematics, University of Utah}
\thanks{$^1$Supported in part by NSF Grant DMS-0505030}
\address{F.~Rassoul-Agha, 155 S 1400 E, Salt Lake City, UT 84112}
\email{firas@math.utah.edu}
\urladdr{www.math.utah.edu/$\sim$firas}
\author[T.~Sepp\"al\"ainen]{Timo~Sepp\"al\"ainen$^{2}$} 
\thanks{$^2$Mathematics Department, University of Wisconsin-Madison}
\thanks{$^2$Supported in part by NSF Grant DMS-0402231}
\address{T.~Sepp\"al\"ainen, 419 Van Vleck Hall, Madison, WI 53706}
\email{seppalai@math.wisc.edu}
\urladdr{www.math.wisc.edu/$\sim$seppalai}

\date{\today}

\keywords{Random walk, non-nestling, random environment, 
central limit theorem,
invariance principle, point of view of the
particle, environment process, Green function.}
\subjclass[2000]{60K37, 60F05, 60F17, 82D30}


\begin{abstract}
We consider a non-nestling random walk in a product
random environment. We assume an exponential moment for the step of 
the walk, uniformly in the environment.
We prove  an invariance principle (functional central
limit theorem) under almost every  environment for the centered and
diffusively scaled walk.  
The main point behind the invariance principle is that the quenched mean of the walk
behaves subdiffusively.
\end{abstract}

\title[Quenched Functional CLT for RWRE]
{Almost sure  functional central limit theorem  for non-nestling
random walk in random environment}


\maketitle

\def\bfP{{\mathbf P}}
\def\bfE{{\mathbf E}}
\def\R{\bbR}
\def\N{\bbN}
\def\Z{\bbZ}
\def\Q{\bbQ}
\def\P{\bbP}
\def\E{\bbE}
\def\xbar{{{\bar x}}}
\def\Qtil{{{\widetilde Q}}}
\def\si{{\sigma}}


\section{Introduction and main result}
\label{intro}
We prove a quenched functional central limit theorem for
non-nestling random walk in random environment (RWRE) on the  $d$-dimensional
integer lattice $\Z^d$ in dimensions $d\ge 2$. 
Here is a  general description of  the model,
fairly standard since quite a while. 
An environment $\w$  is a configuration of transition probability vectors
$\w=(\w_x)_{x\in\Z^d}\in\Omega={\mathcal P}^{\Z^d},$  where
$\mathcal P=\{(p_z)_{z\in\Z^d}:p_z\ge0,\sum_z p_z=1\}$ is the simplex of all
probability  vectors on $\Z^d$.
Vector $\w_x=(\w_{x,z})_{z\in\Z^d}$ gives 
the transition probabilities out of state $x$,
denoted by $\pi_{x,y}(\w)=\w_{x,y-x}$. 
To run the random walk, fix an environment $\w$ and 
 an initial state $z\in\Z^d$.
 The random walk  $X_{0,\infty}=(X_n)_{n\geq0}$ in environment $\w$ 
started at $z$ is then the canonical
Markov chain with state space $\Z^d$
whose path measure  $P_z^\w$ satisfies
\begin{align*}
P_z^\w(X_0=z)=1\quad\text{and}\quad 
P_z^\w(X_{n+1}=y|X_n=x)=\pi_{x,y}(\w).
\end{align*}

On the space $\Omega$ we put its  product
$\si$-field $\kS$,  natural shifts 
$\pi_{x,y}(T_z\w)=\pi_{x+z,y+z}(\w)$, and  
 a $\{T_z\}$-invariant probability measure $\P$ that makes the system
$(\Omega,\kS,(T_z)_{z\in\Z^d},\P)$ ergodic.  In this paper
 $\P$ is an  i.i.d.\  product  measure on ${\mathcal P}^{\Z^d}$.
In other words,    the vectors 
$(\w_x)_{x\in\Z^d}$ are i.i.d.\ across the sites $x$ under $\P$.

Statements, probabilities and expectations 
 under a fixed environment, such as the distribution  $P_z^\w$ above, are 
called {\sl quenched}.  When also the environment is averaged out,
the notions are called {\sl averaged}, or also {\sl annealed}. 
In particular,  the averaged distribution  $P_z(d x_{0,\infty})$ of the walk 
is the marginal of the joint  distribution 
$P_z(d x_{0,\infty},d\w)=P_z^\w(d{x}_{0,\infty})\P(d\w)$
on paths and environments. 

Several excellent expositions on RWRE exist,
and we refer the reader to the lectures \cite{tenlectures},
\cite{rwrelectures} and 
\cite{stflour}.  
 We turn to  the specialized
assumptions imposed on the model in this paper. 

The main assumption is  {\sl non-nestling} (N) which guarantees a
drift  uniformly over the 
environments.   The terminology 
was introduced by Zerner 
\cite{zernerldp}.  

\newtheorem*{hypothesisN}{\sc Hypothesis (N)}
\begin{hypothesisN}
There exists a vector $\uhat\in\Z^d\setminus\{0\}$ and a constant 
$\delta>0$ such that
\[\P\Bigl\{\w\,:\, \sum_{z\in\Z^d} z\cdot\uhat\,\pi_{0,z}(\w)
\geq\delta\Bigr\}=1.\]
\end{hypothesisN}

There is no harm in assuming $\uhat\in\Z^d$, and this
is convenient. 
We utilize  two auxiliary assumptions: an
exponential moment bound (M) on the steps of
the walk, and some regularity (R) on the environments.

\newtheorem*{hypothesisM}{\sc Hypothesis (M)}
\begin{hypothesisM}
There exist positive constants $M$ and ${s}_0$
such that 
\[\P\Bigl\{\w\,:\, \sum_{z\in\Z^d}e^{{s}_0|z|}\pi_{0,z}(\w)\leq e^{{s}_0 M}\Bigr\}=1.\]
\end{hypothesisM}

\newtheorem*{hypothesisWE}{\sc Hypothesis (R)}
\begin{hypothesisWE}
There exists a  constant $\kappa>0$ 
such that 
\be \P\Bigl\{\w: \, \sum_{z:\,z \cdot\uhat=1}  \pi_{0,z}(\w)\ge\kappa \,\Bigr\} =1. 
\label{level-ell}\ee
Let ${\mathcal J}=\{z:\E\pi_{0,z}>0\}$ be the set of admissible steps
under $\P$.  Then    
\be
\P\{\forall z:\pi_{0,0}+\pi_{0,z}<1\}>0  \quad\text{and}\quad
{\mathcal J}\not\subset\R u \ \text{ for all $u\in\R^d$.}
\label{Yell5}
\ee  
\end{hypothesisWE}

Assumption \eqref{level-ell} above is
 stronger than needed.   In the proofs it is
actually used in the form \eqref{level-ell-1}  [Section
\ref{intersectbound}] that permits backtracking before 
hitting the level $x\cdot\uhat=1$.  At the expense of additional
technicalities  in  Section
\ref{intersectbound}  quenched assumption \eqref{level-ell} can be 
replaced by an averaged requirement. 

Assumption \eqref{Yell5}
 is used in Lemma  \ref{Yapplm1}.   
It  is necessary for the  quenched CLT 
as was discovered already in the simpler forbidden direction
case we studied  in 
\cite{forbidden-alea}  and \cite{forbidden-qclt}. 
Note that assumption \eqref{Yell5} rules out the case $d=1$.   
However, the issue is not whether the walk is genuinely $d$-dimensional, 
but whether the walk can explore its environment thoroughly enough to 
suppress the fluctuations of the quenched mean. 
  Most work on RWRE takes uniform
ellipticity and  nearest-neighbor jumps as standing assumptions, 
which of course imply Hypotheses (M) and (R). 

These assumptions are more than strong enough to imply
a law of large numbers: there exists a  velocity $v\ne 0$ such that 
\be
P_0\bigl\{\,\lim_{n\to\infty}n^{-1}X_n=v\bigr\}=1.
\label{lln1}\ee
  Representations for $v$ are given in \eqref{defv} 
and Lemma \ref{velocity}. 
Define the (approximately)
 centered and diffusively scaled process 
\begin{align}
B_n(t)=\frac{X_{[nt]}-[nt]v}{\sqrt{n}}. 
\label{Bndef}
\end{align}
As usual  $[x]=\max\{n\in\Z: n\le x\}$ is the integer part of a real $x$.
Let $D_{\R^d}[0,\infty)$
be the standard Skorohod space of $\R^d$-valued cadlag paths
(see \cite{EK} for the basics).
Let $Q_n^\w=P^\w_0(B_n\in\cdot\,)$ 
denote the quenched distribution
of the process $B_n$ on $D_{\R^d}[0,\infty)$.

The results of this paper
 concern the limit of the process $B_n$
as $n\to\infty$.  
As expected, the limit process is a Brownian motion with 
correlated coordinates.  
 For a symmetric, non-negative definite 
$d\times d$ matrix $\mathfrak D$,
a {\sl Brownian motion with diffusion matrix $\mathfrak D$} is the $\R^d$-valued
process $\{{B}(t):t\geq0\}$ with 
 continuous paths, independent increments,
and such that  for $s<t$ the $d$-vector ${B}(t)-{B}(s)$ has Gaussian distribution
with mean zero and covariance matrix
$(t-s)\mathfrak D$.
The matrix $\mathfrak D$  is {\sl degenerate} in direction $u\in\R^d$
if $u^t\mathfrak D u=0$.   Equivalently,
$u\cdot {B}(t)=0$ almost surely.

Here is the main result.  

\begin{theorem}
\label{main}
Let $d\geq2$ and
consider a random walk in an i.i.d.\ product 
random environment that satisfies 
  non-nestling {\rm(N)}, 
 the exponential moment hypothesis {\rm(M)}, and
the  regularity in {\rm(R)}. 
Then for $\P$-almost every   
$\w$  distributions $Q_n^\w$  
converge weakly on $D_{\R^d}[0,\infty)$  to the distribution of a Brownian 
motion with a diffusion matrix $\kD$ that is independent of 
$\w$.  $u^t\kD u=0$ iff $u$ is orthogonal to the span of 
$\{x-y: \E(\pi_{0x})\E(\pi_{0y})>0\}$. 
\end{theorem}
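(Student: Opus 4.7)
The plan is to combine an averaged functional CLT with $\P$-almost sure subdiffusivity of the quenched mean, as foreshadowed in the abstract. For $f$ bounded continuous on $D_{\R^d}[0,\infty)$,
\[
Q_n^\w[f] - E_0[f(B_n)] = E_0^\w[f(B_n)] - E_0[f(B_n)],
\]
so the quenched CLT follows from (i) averaged convergence $E_0[f(B_n)] \to E[f(B)]$ to Brownian motion with the stated matrix $\kD$, together with (ii) the $\P$-a.s.\ vanishing of the right-hand side for $f$ in a convergence-determining class.

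For (i) I would invoke the Sznitman--Zerner regeneration structure in the direction $\uhat$, available under (N) and (M): stopping times $0 < \tau_1 < \tau_2 < \cdots$ with $X_{\tau_k}\cdot\uhat$ strictly increasing and with the blocks $(X_{\tau_{k+1}}-X_{\tau_k},\tau_{k+1}-\tau_k)_{k\ge 1}$ i.i.d.\ under $P_0(\,\cdot\mid\tau_1<\infty)$. Non-nestling together with (M) yields exponential tails for $\tau_1$ and $|X_{\tau_1}|$, so the classical renewal CLT applied to these i.i.d.\ blocks gives the averaged LLN \eqref{lln1} with velocity $v$, and the averaged functional CLT on $D_{\R^d}[0,\infty)$ with diffusion matrix $\kD$ read off from the covariance of $X_{\tau_2}-X_{\tau_1}-v(\tau_2-\tau_1)$.

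The crux is (ii). A standard Borel--Cantelli plus monotonicity argument reduces the $\P$-a.s.\ statement to a variance estimate of the form
\[
\E\Bigl[\bigl(E_0^\w[g(X_n/\sqrt{n})] - E_0[g(X_n/\sqrt{n})]\bigr)^2\Bigr] \le C\,n^{-\delta}
\]
for smooth $g$ and some $\delta>0$. The natural way to treat this variance is to expand it as a covariance of two independent copies $X,X'$ of the walk in a common environment, and to bound the contribution coming from sites they both visit. Applying the regeneration structure to each walk, this reduces to controlling the expected cardinality of $\{X_j\}_{j\le n}\cap\{X'_j\}_{j\le n}$, which should be at most $n^{1-\delta}$. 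Hypothesis (R) is decisive here: \eqref{level-ell} forces uniform progress to the next $\uhat$-level and prevents long dwelling on any single level, while \eqref{Yell5} rules out confinement to a lower-dimensional sublattice and so provides the lateral spreading that keeps two independent copies typically on distinct sites at each level. Establishing this intersection estimate (the object of Section \ref{intersectbound} and Lemma \ref{Yapplm1}) is the main technical obstacle; the remainder is bookkeeping.

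For the degeneracy characterization, observe that if $u$ is orthogonal to $\Span\{x-y:\E\pi_{0x}\E\pi_{0y}>0\}$ then every admissible increment has the same $u$-component, so $u\cdot(X_n-nv)$ vanishes identically along $P_0$-a.e.\ path and hence $u^t\kD u=0$. Conversely, if $u$ is not orthogonal to that span, two admissible steps have distinct $u$-projections, which produces strictly positive variance of $u\cdot(X_{\tau_2}-X_{\tau_1})$ in the averaged limit and hence $u^t\kD u>0$.
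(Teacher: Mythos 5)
Your step (ii) contains the decisive gap. The martingale decomposition over lattice sites bounds the variance of a bounded quenched functional by the expected number of common sites of two independent walks in the same environment:
\[
\E\Bigl[\bigl(E_0^\w[g(X_n/\sqrt{n})]-E_0[g(X_n/\sqrt n)]\bigr)^2\Bigr]
\;\le\; C\,E_{0,0}\bigl(\lvert X_{[0,n)}\cap\Xtil_{[0,n)}\rvert\bigr),
\]
because changing the environment at a site the walk never visits does not change $E_0^\w[g(X_n/\sqrt n)]$, while at a visited site the change can be of order $1$. The right-hand side is at least $1$ (the walks share the origin) and under the present hypotheses is genuinely of order $n^{1-\eta}$; it is nowhere near the $n^{-\delta}$ you need for Borel--Cantelli. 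This is precisely why the Bolthausen--Sznitman/Berger--Zeitouni route, which is what you are proposing, requires a much more refined analysis (exploiting the smoothness of $g$ and discarding intersections near time $0$) and ends up needing high dimension $d\ge4$ to make the residual intersection count small. The paper deliberately avoids this: it targets the \emph{quenched mean} $E_0^\w(X_n)$, for which mere subdiffusivity $\Var=\Ord(n^{2\alpha})$, $\alpha<1/2$, suffices, and then invokes Theorem \ref{RS} (the Maxwell--Woodroofe/Derriennic--Lin martingale approximation applied to the environment process) to upgrade that single estimate to the full quenched invariance principle. Your proposal is missing this entire mechanism, and with it the construction of the invariant measure $\P_\infty$ for the environment chain (Section \ref{Pinfty_exists}), the variation-distance comparison $\vard(\P_{\infty|\kS_\ell},\P_{|\kS_\ell})\le Ce^{-c\ell}$, and the transfer arguments of Section \ref{substitution} that move both the hypothesis \eqref{cond} and the conclusion between $\P_\infty$ and $\P$. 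Without Theorem \ref{RS}, the intersection bound $n^{1-\eta}$ that Section \ref{intersectbound} actually delivers cannot be converted into a quenched CLT.

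Two smaller remarks. First, even granting a variance bound $n^{-\delta}$, the ``standard Borel--Cantelli plus monotonicity'' step for a functional limit theorem on $D_{\R^d}[0,\infty)$ needs an interpolation/maximal-inequality argument between subsequence points, which you should not wave away. Second, your degeneracy argument is right in spirit for the direction ``orthogonality $\Rightarrow$ $u^t\kD u=0$,'' but the converse as you state it is too quick: $\kD$ is built from $X_{\tau_1}-\tau_1 v$ conditioned on no backtracking, and admissible steps $z$ with $z\cdot\uhat<0$ require the compensating construction with a forward step $w$ (as in the paper's degeneracy lemma) before one can conclude $z\cdot u=v\cdot u$.
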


Eqn \eqref{defkD} gives the expression for the diffusion matrix 
$\kD$, familiar for example from \cite{slowdown+clt}. 
Before turning to the proofs we discuss briefly the current
situation in this area of probability and the place of this
work in this context. 

Several different  approaches can be identified in
recent work on quenched central limit theorems for
multidimensional RWRE.
 (i) Small perturbations of classical random walk 
have been studied by many authors. The most significant 
results include the early work of Bricmont and Kupiainen \cite{BK}
and more recently Sznitman and Zeitouni  \cite{szni-zeit-06}
for small perturbations of Brownian motion in
dimension $d\ge 3$.
(ii) An averaged CLT can be turned into a quenched CLT
by  bounding certain variances through the control of intersections
of two independent paths.  This idea was introduced
by Bolthausen and Sznitman in \cite{dynstat}
and more recently applied by Berger and Zeitouni in \cite{berg-zeit-07-}. 
Both 
utilize high dimension to handle the intersections. 
   (iii) Our approach is based on the subdiffusivity 
of the quenched mean of the walk.  That is, we show that  
the variance of $E^\w_0(X_n)$ is of order $n^{2\alpha}$ for some
$\alpha<1/2$. 
We also achieve this through intersection bounds. Instead 
of high dimension we assume  strong enough drift.  
We introduced this line of reasoning in \cite{qclt-spacetime}
 and later applied it to the 
case of walks with a forbidden direction in \cite{forbidden-qclt}. 
The significant advance taken in the present paper over 
 \cite{qclt-spacetime} and  \cite{forbidden-qclt} 
 is the elimination of restrictions  on the
admissible steps of the  walk. 
Theorem \ref{RS} below summarizes the general principle for
application in this paper.

As the reader will see, the arguments in this paper are based on
quenched exponential bounds that flow from Hypotheses (N), (M) and (R). 
It is common in this field to look for an invariant measure $\P_\infty$
for the environment process that is mutually absolutely continuous
with the original $\P$,  at least on the 
 part of the space $\Omega$ to which the drift points.  In this
paper we do things a little differently: instead of the 
absolute continuity, we use bounds on the variation
distance between $\P_\infty$ and $\P$.  This distance will
decay exponentially in the direction $\uhat$.   

 In the case of
nearest-neighbor, uniformly elliptic  non-nestling 
 walks in dimension $d\ge 4$
the quenched CLT  has been  proved  earlier:  
first by  Bolthausen and Sznitman \cite{dynstat} under  
a small noise assumption, 
and recently   by Berger and Zeitouni \cite{berg-zeit-07-} without the 
small noise assumption.
Berger and Zeitouni \cite{berg-zeit-07-}
 go beyond non-nestling to more general ballistic walks.
The method in these two papers  utilizes high 
dimension crucially. Whether their argument can work in $d=3$ 
is not presently clear.   The  approach of the present paper
should  work for  more general ballistic walks in all dimensions $d\ge 2$, 
as the main 
technical step that reduces the variance estimate to an
intersection estimate is generalized (Section 
\ref{pathintersections} in the present paper).  

We turn to  the proofs. The next section collects some
preliminary material and finishes with an outline of the rest
of the paper.

\section{Preliminaries for the proof.} 
\label{prelim}
As mentioned, we can assume
that $\uhat\in\Z^d$.  This is convenient 
because then the lattice $\Z^d$ decomposes into {\sl levels}
identified by the integer value   $x\cdot\uhat$. 

Let us summarize  notation for the reader's convenience.
Constants whose exact values are not important and can change
from line to line are often denoted by $C$ and $s$. 
The set of nonnegative integers is  $\N=\{0,1,2,\dotsc\}$.
  Vectors and 
sequences are abbreviated $x_{m,n}=(x_m, x_{m+1},\dotsc,x_n)$
and $x_{m,\infty}=(x_m, x_{m+1}, x_{m+2}, \dotsc)$.  Similar
notation is used for finite and infinite random paths:
 $X_{m,n}$ $=$ $(X_m$, $X_{m+1},$ $\dotsc,$ $X_n)$
and $X_{m,\infty}$ $=$ $(X_m,$ $X_{m+1},$ $ X_{m+2}, \dotsc)$.
$X_{[0,n]}=\{X_k:0\leq k\leq n\}$ denotes the set of sites visited
by the walk. 
 ${\mathfrak D}^t$ is the transpose of a vector or matrix
${\mathfrak D}$. An element of $\R^d$ is regarded as a $d\times 1$ 
 column vector.
 The left shift on the path 
space  $(\Z^d)^\N$ is 
 $(\theta^kx_{0,\infty})_n = x_{n+k}$. 

 $\E$, $E_0$, and $E_0^\w$ denote
expectations under, respectively, $\P$, $P_0$, and $P_0^\w$.
$\P_\infty$ will denote an invariant measure on $\Omega$, with
expectation $\E_\infty$. We abbreviate 
$P^\infty_0(\cdot)=\E_\infty P^\w_0(\cdot)$
and $E^\infty_0(\cdot)=\E_\infty E^\w_0(\cdot)$ to indicate
that the environment of a quenched expectation is averaged
under $\P_\infty$. 
A family of  $\sigma$-algebras  on $\Omega$ that 
in a sense look towards the future  is defined by 
${\kS}_\ell=\sigma\{\w_x: x\cdot \uhat\geq \ell\}$.

Define the {\sl drift} 
\[D(\w)=E_0^\w(X_1)=\sum_z z\pi_{0z}(\w).\]
The {\sl environment process}  is the   Markov chain   
on $\Omega$ with transition kernel
\[\Pi(\w,A)=P_0^\w(T_{X_1}\w\in A).\]

The proof of the  quenched CLT  Theorem \ref{main}  utilizes
crucially the environment process and its invariant 
distribution. A preliminary part of the proof  is summarized in 
the next theorem quoted from  \cite{qclt-spacetime}.  This  Theorem \ref{RS} 
was proved by  applying the arguments  
 of   Maxwell and Woodroofe \cite{MW} and Derriennic and Lin
\cite{DL} to the environment process. 

\begin{theorem}  {\rm \cite{qclt-spacetime}}
Let $d\geq1$.  Suppose the  probability measure  $\P_\infty$  on
$(\Omega,{\kS})$ is invariant and ergodic for the Markov
transition  $\Pi$. Assume that
$\sum_z|z|^2\E_\infty(\pi_{0z})<\infty$ and  that there
exists an $\alpha<1/2$ such that as $n\to\infty$ 
\begin{align}
\E_\infty\bigl[\,\abs{E_0^\w(X_n)-n\E_\infty(D)}^2\,\bigr]=\Ord(n^{2\alpha}).
\label{cond}
\end{align}
Then  as $n\to\infty$ the following weak limit happens 
 for $\P_\infty$-a.e.\ $\w$:  distributions  $Q_n^\w$ 
converge weakly  on the space
 $D_{\R^d}[0,\infty)$ to the distribution of a Brownian motion
with a symmetric, non-negative definite diffusion matrix
$\mathfrak D$  
that is independent of $\w$.
\label{RS}\end{theorem}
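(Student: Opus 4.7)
The natural starting point is the decomposition of the walk into a quenched martingale and an additive functional of the environment chain $\bar\w_k=T_{X_k}\w$. Setting $v=\E_\infty(D)$, write
\begin{align*}
X_n - nv &= M_n + S_n, \\
M_n &:= \sum_{k=0}^{n-1}\bigl(X_{k+1}-X_k-D(\bar\w_k)\bigr), \\
S_n &:= \sum_{k=0}^{n-1}\bigl(D(\bar\w_k)-v\bigr).
\end{align*}
Under any quenched law $P_0^\w$, $M_n$ is a martingale in the filtration of the walk whose conditional one-step covariance $\Phi(\bar\w_k)$ depends only on the current environment, and $\Phi$ lies in $L^1(\P_\infty)$ by the second moment hypothesis on steps.

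The key observation is that \eqref{cond} is exactly the Maxwell-Woodroofe hypothesis for $S_n$, applied coordinate-wise. Indeed
\begin{align*}
E_0^\w(X_n)-nv=\sum_{k=0}^{n-1}\Pi^k(D-v)(\w),
\end{align*}
so \eqref{cond} reads $\bigl\|\sum_{k=0}^{n-1}\Pi^k(D-v)\bigr\|_{L^2(\P_\infty)}=O(n^\alpha)$, and since $\alpha<1/2$ the Maxwell-Woodroofe sum $\sum_n n^{-3/2+\alpha}$ converges. Their resolvent construction, sharpened \`a la Derriennic-Lin, then yields an $L^2(\P_\infty)$ corrector $g$ and a $P_0^\infty$-martingale $\widetilde M_n$ with stationary ergodic increments such that $S_n=\widetilde M_n+g(\w)-g(\bar\w_n)$. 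Combining gives
\begin{align*}
X_n - nv = N_n + g(\w) - g(\bar\w_n), \qquad N_n := M_n + \widetilde M_n,
\end{align*}
where $N_n$ is again a $P_0^\w$-martingale with a conditional one-step covariance $\Psi(\bar\w_{k})\in L^1(\P_\infty)$.

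It remains to convert this into a quenched functional CLT. By Birkhoff applied to the ergodic chain $\bar\w_k$ under $\P_\infty$, for $\P_\infty$-a.e.\ $\w$ the scaled quadratic variation $n^{-1}\sum_{k<n}\Psi(\bar\w_k)$ converges to a deterministic matrix $\kD$, the Lindeberg condition holds via the second moment bound on steps together with $\E_\infty|g|^2<\infty$, and the martingale functional invariance principle delivers quenched convergence of $N_{[nt]}/\sqrt n$ to Brownian motion with covariance $\kD$. The corrector is negligible: stationarity and $\E_\infty|g|^2<\infty$ force $g(\bar\w_n)/\sqrt n\to 0$ in probability, and a maximal argument upgrades this to $\max_{k\le n}|g(\bar\w_k)|/\sqrt n\to 0$ for $\P_\infty$-a.e.\ $\w$, giving uniform negligibility on compacts. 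Ergodicity of $\P_\infty$ makes $\kD$ deterministic. The main obstacle is precisely this last step: the Maxwell-Woodroofe construction is intrinsically annealed under $\P_\infty$, and lifting both the Birkhoff convergence of the quadratic variation and the a.s.\ smallness of $g(\bar\w_n)$ to hold along trajectories from a single starting $\w$ is the technical heart of the Derriennic-Lin refinement one must invoke.
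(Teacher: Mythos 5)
Your overall route is the same as the one behind the cited result: the paper does not reprove Theorem \ref{RS} but quotes it from \cite{qclt-spacetime}, where it is obtained essentially as you describe --- split $X_n-nv$ into the quenched martingale $M_n$ and the additive functional $S_n$ of the environment chain, read \eqref{cond} as the Maxwell--Woodroofe condition for $D-v$, and finish with a quenched martingale invariance principle. One step in your write-up is, however, stated too strongly. Under \eqref{cond} with $\alpha<1/2$ the Maxwell--Woodroofe/Derriennic--Lin machinery does \emph{not} produce an exact $L^2(\P_\infty)$ coboundary $S_n=\widetilde M_n+g(\w)-g(\bar\w_n)$ with a single $g\in L^2$; that representation would require convergence of the resolvent (Poisson-equation) series, i.e.\ a Gordin--Lif\v{s}ic type hypothesis strictly stronger than \eqref{cond}. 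What one actually obtains is a martingale approximation $S_n=\widetilde M_n+R_n$ whose error $R_n$ is the sum of a coboundary and a \emph{fractional} coboundary of order $<1/2$; the Derriennic--Lin contribution is precisely that such fractional coboundaries satisfy $n^{-1/2}\max_{k\le n}\lvert R_k\rvert\to0$ almost surely, which is what makes the corrector negligible along quenched trajectories. You correctly flag that the quenched upgrade is the technical heart, but with the exact-coboundary form you assumed that heart would be trivial (stationarity plus $g\in L^2$ suffice), so the real difficulty is hidden in the decomposition itself rather than in the final step. With that correction the argument is the standard one.
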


Another  central tool  for the development that follows is provided  by the 
{\sl Sznitman-Zerner 
 regeneration times} \cite{szlln} that we now define.  
 For $\ell\ge0$ let 
$\lambda_\ell$ be the first time the walk reaches
level $\ell$  relative to the initial level:
\[\lambda_\ell=\min\{n\ge0:X_n\cdot\uhat-X_0\cdot\uhat\ge\ell\}.\]
Define  $\beta$ to be the first backtracking
time:
\[\beta=\inf\{n\ge0:X_n\cdot\uhat<X_0\cdot\uhat\}.\]
Let $M_n$ be the maximum level, relative to the starting level, 
 reached by time $n$:
\[M_n=\max\{X_k\cdot\uhat-X_0\cdot\uhat:0\le k\le n\}.\]
For $a>0$, and when $\beta<\infty$,
consider the first time by which the walker reaches level
$M_\beta+a$:
\[\lambda_{M_\beta+a}=\inf\{n\ge\beta:X_n\cdot\uhat-X_0\cdot\uhat\ge M_\beta+a\}.\]
Let $S_0=\lambda_a$ and, as long as $\beta\circ\theta^{S_{k-1}}<\infty$, 
define
 $S_k=S_{k-1}+\lambda_{M_\beta+a}\circ\theta^{S_{k-1}}$   for $k\ge1$.
Finally, let the first regeneration time be
\begin{align}
\label{tau}
\tau_1^{(a)}=\sum_{\ell\ge0}S_\ell
\one\{\beta\circ\theta^{S_k}<\infty
\text{ for }0\le k<\ell\text{ and }
\beta\circ\theta^{S_\ell}=\infty\}.
\end{align}
Non-nestling  guarantees 
that $\tau^{(a)}_1$ is finite,
and in fact gives moment bounds uniformly in $\w$ as  we see
in Lemma \ref{exponential} below. 
Consequently we can iterate to  define  $\tau_0^{(a)}=0$, and
for $k\ge 1$
\be
\tau_k^{(a)}=\tau_{k-1}^{(a)}+\tau_1^{(a)}\circ\theta^{\tau_{k-1}^{(a)}}.
\label{tauk}\ee

When the value of $a$ is not important we simplify the 
notation to $\tau_k=\tau^{(a)}_k$. 
Sznitman and Zerner \cite{szlln}  proved that the {\sl regeneration slabs} 
\be
\begin{split}
\cS_k=&\bigl( \tau_{k+1}-\tau_k,\,
 (X_{\tau_k+n}-X_{\tau_k})_{0\le n\le \tau_{k+1}-\tau_k},\,\\
&\qquad\qquad 
\{\w_{X_{\tau_k}+z}: 0\le z\cdot\uhat<(X_{\tau_{k+1}}-X_{\tau_k})\cdot\uhat\}
\bigr)  \end{split} 
\label{regenslab}\ee
are   i.i.d.\ for $k\ge 1$, each distributed   as
$\bigl( \tau_{1},\,
 (X_{n})_{0\le n\le \tau_{1}},\,
\{\w_{z}: 0 \le z\cdot\uhat<X_{\tau_{1}}\cdot\uhat\}\bigr)$
under $P_0(\,\cdot\,\vert\,\beta=\infty)$. 
Strictly speaking, uniform ellipticity and  nearest-neighbor jumps
 were standing assumptions in \cite{szlln}, but these assumptions
are not needed for the proof of the i.i.d.\ structure. 

From the renewal structure and moment estimates 
a law of large numbers \eqref{lln1} and an averaged functional central
limit theorem follow, along the lines of Theorem 2.3 
in \cite{szlln} and
Theorem 4.1 in \cite{slowdown+clt}.  These references treat walks that 
satisfy Kalikow's condition,  considerably more general than the 
non-nestling walks we study.   
   The limiting velocity for the 
law of large numbers  is
\be
v=\frac{E_0(X_{\tau_1}\vert\beta=\infty)}{E_0({\tau_1}\vert\beta=\infty)}.
\label{defv}\ee
The averaged CLT states that the distributions  $P_0\{B_n\in\,\cdot\,\}$ 
converge to the distribution of a Brownian motion with 
diffusion matrix
\be
\kD=\frac{E_0\bigl[(X_{\tau_1}-\tau_1v)(X_{\tau_1}-\tau_1v)^t\vert\beta=\infty\bigr]}
{E_0[{\tau_1}\vert\beta=\infty]}.
\label{defkD}
\ee

Once we know 
 that the $\P$-a.s.\ quenched CLT holds with a constant diffusion matrix,
 this diffusion matrix must be the same $\kD$  as for the 
averaged CLT.  
We give here  the argument for the degeneracy statement of
Theorem \ref{main}. 

\begin{lemma}  Define  $\kD$  by {\rm \eqref{defkD}} and let $u\in\R^d$. 
Then 
 $u^t\kD u=0$ iff $u$ is orthogonal to the span of 
$\{x-y: \E(\pi_{0x})\E(\pi_{0y})>0\}$.
\end{lemma}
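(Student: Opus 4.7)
The starting point is to unpack formula \eqref{defkD}, which gives
\[
u^t\kD u=\frac{E_0\bigl[(u\cdot(X_{\tau_1}-\tau_1 v))^2\,|\,\beta=\infty\bigr]}{E_0[\tau_1\,|\,\beta=\infty]}.
\]
The denominator is finite by the exponential moment bound on $\tau_1$ that the paper will establish from non-nestling, so $u^t\kD u=0$ is equivalent to the pointwise identity
\[
(\ast)\qquad u\cdot X_{\tau_1}=\tau_1\,(u\cdot v)\qquad P_0(\,\cdot\,|\,\beta=\infty)\text{-a.s.}
\]
I will derive both implications of the lemma from $(\ast)$.

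If $u$ is orthogonal to $L:=\Span\{x-y:x,y\in\cJ\}$, then $u\cdot z$ takes a common value $c$ on $\cJ$. Under $P_0$ each step $X_n-X_{n-1}$ lies in $\cJ$ almost surely, so $u\cdot X_n\equiv nc$, and the LLN \eqref{lln1} forces $u\cdot v=c$; then $u\cdot(X_n-nv)\equiv 0$ and $(\ast)$ holds trivially.

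For the converse my plan is to convert $(\ast)$ into the pointwise statement $u\cdot z=u\cdot v$ for every $z\in\cJ$ by engineering, for each such $z$, a deterministic initial path $\gamma_z=(0,y_1,\ldots,y_N)$ and a regeneration parameter $a\ge1$ such that the event $\{X_{0,N}=\gamma_z\}\cap\{\beta\circ\theta^N=\infty\}$ has positive $P_0$-probability, sits inside $\{\beta=\infty\}$, and forces $\tau_1^{(a)}=N$ with $X_{\tau_1}=y_N$. Positivity comes from independent sampling of the environment along the distinct sites of $\gamma_z$ together with the standard non-nestling estimate $P_0(\beta=\infty)>0$, and $(\ast)$ then delivers an explicit affine equation for $u\cdot z$. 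For $z\cdot\uhat\ge 1$ the trivial path $\gamma_z=(0,z)$ with $a=1$ gives $\lambda_1=\tau_1=1$, so $(\ast)$ reads $u\cdot z=u\cdot v$; applied to a level-$1$ step $w\in\cJ$ supplied by the first clause of (R) this yields $u\cdot w=u\cdot v$. For $z\cdot\uhat\le 0$ I set $k=1-z\cdot\uhat\ge 1$ and use
\[
\gamma_z=(0,w,2w,\ldots,kw,\,kw+z,(k+1)w+z,\ldots,(2k)w+z),\qquad a=k+1;
\]
the levels along $\gamma_z$ are $0,1,\ldots,k,1,2,\ldots,k+1$, all nonnegative, and the terminal level $k+1$ is attained for the first time at $y_N=2kw+z$ with $N=2k+1$, so $\tau_1=N$, and $(\ast)$ combined with $u\cdot w=u\cdot v$ forces $u\cdot z=u\cdot v$. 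Hence $u\cdot z$ is constant on $\cJ$, i.e., $u\perp L$.

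The main obstacle I anticipate is verifying positivity of the path probability in the degenerate case where sites of $\gamma_z$ coincide, which happens precisely when $z$ is a scalar multiple of $w$. To deal with this I would appeal to the non-collinearity clause $\cJ\not\subset\R u$ of (R) to pick an auxiliary $w'\in\cJ$ linearly independent of $w$, and insert it into $\gamma_z$ to separate the colliding sites, with a corresponding minor adjustment of the level count and of $a$. The other verifications (levels, $\lambda_a$, and the regeneration structure) are routine consequences of the definition of $\tau_1^{(a)}$.
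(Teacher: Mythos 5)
Your proposal follows essentially the same route as the paper: reduce $u^t\kD u=0$ to the a.s.\ identity $u\cdot X_{\tau_1}=\tau_1\,u\cdot v$ on $\{\beta=\infty\}$, get the easy direction from the law of large numbers, and for the converse build, for each admissible $z$, an explicit finite path followed by a no-backtracking event whose $P_0$-probability is positive and on which $(X_{\tau_1},\tau_1)$ is deterministic, yielding the linear relation $u\cdot z=u\cdot v$. The ``ladder, one $z$-step, ladder'' path for $z\cdot\uhat\le 0$ is exactly the paper's construction; your choice of regeneration parameter $a=k+1$ (so that level $a$ is first attained at the endpoint of the path) makes the identification $\tau_1^{(a)}=N$ somewhat cleaner than the paper's choice $a=1$.

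The one substantive difference is how the colliding-sites case $z\in\Z w$ is handled, and this is where your sketch is thin. The paper does not take $w$ from \eqref{level-ell}; it uses non-nestling to choose $w$ with $\E[\pi_{0,z}\pi_{0,w}]>0$ and $w\cdot\uhat>0$. With that choice, even when the path revisits a site the factorized expectation only produces factors of the form $\E[\pi_{0,w}^j]$ and $\E[\pi_{0,z}\pi_{0,w}]$, all positive, so no separate degenerate case arises. Your $w$ from \eqref{level-ell} carries only marginal positivity $\E\pi_{0,w}>0$, so when $z=cw$ the site at which both the $z$-step and a $w$-step are taken contributes a factor $\E[\pi_{0,z}\pi_{0,w}]$ that could vanish. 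Your proposed repair --- inserting an auxiliary $w'\in\cJ\setminus\R w$ --- can be made to work, but note that the insertion shifts the endpoint by a multiple of $w'$, so the resulting equation involves the unknown $u\cdot w'$; you must first run your own construction for $z=w'$ (which is collision-free precisely because $w'\notin\R w$) to obtain $u\cdot w'=u\cdot v$ before the insertion yields anything. Either completing that two-step argument or simply switching to the paper's jointly admissible $w$ closes the gap.
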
 

\begin{proof} The argument is a minor embellishment of that given
for a similar degeneracy  statement 
on p.~123--124  of \cite{forbidden-alea} for the forbidden-direction
 case where $\pi_{0,z}$ is supported by $z\cdot\uhat\ge 0$.
We spell out enough  of the argument  to show 
 how to adapt that proof to the present case.

Again, the intermediate step is to show that  $u^t\kD u=0$ 
iff $u$ is orthogonal to the span of 
$\{x-v: \E(\pi_{0x})>0\}$.  The argument from orthogonality
to  $u^t\kD u=0$  goes as in \cite[p.~124]{forbidden-alea}. 

Suppose $u^t\kD u=0$ which is the same as 
\[ P_0(X_{\tau_1}\cdot u
=\tau_1 v\cdot u \,\vert\,\beta=\infty)=1.\] 
Suppose $z$ is such that $\E\pi_{0,z}>0$ and $z\cdot\uhat< 0$. 
By non-nestling there must exist $w$ such that  $\E\pi_{0,z}\pi_{0,w}>0$
 and $w\cdot\uhat>0$.  Pick $m>0$ so that $(z+mw)\cdot\uhat>0$ but 
$(z+(m-1)w)\cdot\uhat\le 0$. Take $a=1$ in the definition 
\eqref{tau} of regeneration.  Then  
\begin{align*}
&P_0[X_{\tau_1}=z+2mw, \tau_1=2m+1\,\vert\,\beta=\infty]\\
&\ge
\E\Bigl[\,\Bigl(\;\prod_{i=0}^{m-1}\pi_{iw,(i+1)w}\Bigr)
\pi_{mw,z+mw}\Bigl(\;\prod_{j=0}^{m-1}\pi_{z+(m+j)w,z+(m+j+1)w}\Bigr)
P^\w_{z+2mw}(\beta=\infty)\Bigr]>0.
\end{align*}
Consequently 
\be (z+2mw)\cdot u=(1+2m)v\cdot u. \label{degenaux1}\ee

In this manner, by replacing $\sigma_1$ with $\tau_1$ 
and by adding in the no-backtracking probabilities, the arguments
 in \cite[p.~123]{forbidden-alea} can be repeated 
to show that if $\E\pi_{0x}>0$ then $x\cdot u=v\cdot u$ for $x$ such
that $x\cdot\uhat\ge 0$. 
In particular the very first step on p.~123  of \cite{forbidden-alea} 
gives $w\cdot u=v\cdot u$. 
This combines with \eqref{degenaux1} above 
to give $z\cdot u=v\cdot u$.   Now simply follow the 
proof in \cite[p.~123--124]{forbidden-alea} to its conclusion. 
\end{proof}

Here is  an outline of the proof of
Theorem \ref{main}.  It all goes via Theorem \ref{RS}.

\smallskip

(i) After some basic estimates in Section \ref{nonnestling},
we prove in Section \ref{Pinfty_exists}
 the existence of the ergodic equilibrium $\P_\infty$ required
for Theorem \ref{RS}.  $\P_\infty$ is not convenient to work with
 so we still need  to do  computations with
$\P$. For this purpose
Section \ref{Pinfty_exists} proves that 
 in the direction $\uhat$ 
the measures $\P_\infty$ and $\P$ come exponentially close in variation 
distance and that the environment process satisfies
 a $P_0$-a.s.\ ergodic theorem. 
 In Section \ref{substitution}  we show that $\P_\infty$
and $\P$  are interchangeable both in the hypotheses that
need to be checked 
and in the conclusions obtained. 
In particular, the $\P_\infty$-a.s.\ quenched CLT
coming from Theorem \ref{RS} holds also $\P$-a.s.  Then we know
that the diffusion matrix $\kD$ is the one in \eqref{defkD}. 

\smallskip

 The bulk of the work goes towards verifying condition
 \eqref{cond}, but under $\P$ instead of $\P_\infty$. 
There are two main stages to this argument. 

\smallskip

(ii) By a decomposition into martingale increments
the proof of \eqref{cond}  reduces to bounding the number of common
points of two independent walks in a common environment
(Section \ref{pathintersections}).

\smallskip

(iii) The intersections are controlled by introducing levels 
at which both walks regenerate. These 
common regeneration levels are reached fast enough and 
the progression from one
common regeneration level to the next is a Markov chain.
When this Markov chain drifts away from the origin it 
 can be approximated well enough by a symmetric random
walk.  This approximation enables us to control the growth 
of the Green function of the Markov chain, and thereby the number of 
common points. This is in Section \ref{intersectbound} and 
in an Appendix devoted to  the Green function bound.

\section{Basic estimates for non-nestling RWRE}
\label{nonnestling}
This section contains estimates that follow from
  Hypotheses  (N) and (M), all collected in the following
lemma.  These will be used repeatedly.
In addition to the stopping  times 
already defined, let \[H_z=\min\{n\ge1:X_n=z\}\]  be the first hitting
time of site $z$.

\begin{lemma}
\label{exponential}
If $\P$ satisfies Hypotheses {\rm(N)} and {\rm(M)}, 
then there exist positive
constants $\eta$, $\gamma$, $\kappa$,
$(C_p)_{p\ge1}$, and ${s}_1\le{s}_0$, 
possibly depending on $M$, ${s}_0$, and $\delta$, such that
 for all $x\in\Z^d$, $n\ge0$, ${s}\in[0,{s}_1]$, $p\ge1$, 
$\ell\ge1$, for $z$ such that $z\cdot\uhat\ge0$, $a\ge1$, 
and for $\P$-a.e.\ $\w$,
\begin{align}
&E_x^\w(e^{-{s} X_n\cdot\uhat})\leq 
e^{-{s} x\cdot\uhat}(1-{s}\delta/2)^n,
\label{exp-bound}\\
&E_x^\w(e^{{s}|X_n-x|})\leq e^{{s} Mn},
\label{bounded-step}\\
&P_x^\w(X_1\cdot\uhat\ge x\cdot\uhat+\gamma)\ge\kappa,
\label{E}\\
&E_x^\w(\lambda_\ell^p)\leq C_p\ell^p,
\label{si-bound}\\
&E_x^\w(|X_{\lambda_\ell}-x|^p)\leq C_p\ell^p,
\label{X-si-bound}\\
&E_0^\w[(M_{H_z}-z\cdot\uhat)^p\one\{H_z<n\}]\le C_p\ell^p P_0^\w(H_z<n)+
C_p s^{-p}e^{-{s}\ell/2},
\label{Hz-bound}\\
&P_x^\w(\beta=\infty)\ge \eta,
\label{beta-bound}\\
&E_x^\w(|\tau_1^{(a)}|^p)\leq C_p\, a^p,
\label{tau-bound}\\
&E_x^\w(|X_{\tau_1^{(a)}+n}-X_n|^p)\leq C_q\, a^q,\text{ for all }q>p.
\label{X-tau-bound}
\end{align}
\end{lemma}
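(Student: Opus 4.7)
The plan is to deduce all nine bounds from one master estimate, the exponential supermartingale inequality \eqref{exp-bound}, which packages (N) and (M) into a single analytically usable form. I would prove it first by writing $e^{-sy}\le 1-sy+\tfrac12 s^2y^2 e^{s|y|}$ with $y=(X_1-X_0)\cdot\uhat$, taking $P^\w_x$-expectation, using (N) on the linear term and (M) on the remainder (after bounding $y^2 e^{s|y|}$ by an exponential at a smaller rate), and choosing $s_1$ small enough that the quadratic correction is absorbed into half the drift. Iterating via the Markov property gives \eqref{exp-bound}, and \eqref{bounded-step} is immediate from (M) by the same iteration. For \eqref{E}, I would split $\delta \le \gamma + E^\w_x[(X_1-x)\cdot\uhat\,;\,(X_1-x)\cdot\uhat\ge\gamma]$ and bound the conditional expectation by Cauchy--Schwarz against (M), solving for $\kappa$ in terms of $\gamma$.

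Next I would derive the stopping-time bounds from \eqref{exp-bound}. Chebyshev gives $P^\w_x(\lambda_\ell>n)\le e^{s\ell}(1-s\delta/2)^n$, which integrated yields \eqref{si-bound}. Optional stopping of the same supermartingale $e^{-sX_n\cdot\uhat}$ at the first hit of any lower level gives $P^\w_x(\text{ever reach level}\le x\cdot\uhat-k)\le e^{-sk}$; specialised to $k=1$ this is \eqref{beta-bound}. For \eqref{X-si-bound} I would promote (M) into a separate supermartingale $\exp\{s\sum_{k\le n}|X_k-X_{k-1}|-sMn\}$, optionally stop it at $\lambda_\ell$, and combine with the exponential tail of $\lambda_\ell$ just obtained to get an exponential moment of $|X_{\lambda_\ell}-x|$ at some rate $s'>0$, from which the $p$-th moment bound follows by Chebyshev integration.

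For \eqref{Hz-bound} the key observation is that $\{M_{H_z}-z\cdot\uhat\ge k\}$ forces the walk to travel from some site at level $z\cdot\uhat+k$ back down to level $z\cdot\uhat$. The strong Markov property applied at the first hit of level $z\cdot\uhat+k$, together with the "never-below" tail derived above, give $P^\w_0(M_{H_z}-z\cdot\uhat\ge k,\,H_z<\infty)\le e^{-sk}$; summing this tail after splitting the $p$-th moment at $k=\ell/2$ produces the claimed two-term bound. For \eqref{tau-bound} I would use the construction \eqref{tau}: by \eqref{beta-bound} the number of failed restarts is stochastically dominated by a geometric variable of parameter $\eta$, while each increment is a $\lambda_{M_\beta+a}$-time whose conditional $p$-th moment is $C_p a^p$ by \eqref{si-bound} together with the exponential tail of $M_\beta$ conditioned on $\beta<\infty$. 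A power-mean inequality on the geometric sum then yields $E^\w_x[\tau_1^{(a)\,p}]\le C_p a^p$. Finally, \eqref{X-tau-bound} follows from $|X_{\tau_1}-X_0|\le\sum_{k\le\tau_1}|X_k-X_{k-1}|$ and H\"older's inequality combining \eqref{tau-bound} with (M); the slack $q>p$ is precisely what allows finite H\"older exponents on both factors so that the combined polynomial dependence on $a$ remains bounded.

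I expect the main obstacles to be \eqref{Hz-bound}, where the overshoot, the indicator $\one\{H_z<n\}$, and the two different scalings ($\ell^p P^\w$ versus $s^{-p}e^{-s\ell/2}$) must be balanced simultaneously by a careful splitting of the $p$-th moment, and \eqref{tau-bound}, whose proof demands a careful recursive use of the strong Markov property at each $S_k$ to tame the growing target height $M_\beta+a$ at each failed restart.
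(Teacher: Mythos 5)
Your proposal is correct and follows the paper's overall architecture (establish \eqref{exp-bound} as the master supermartingale estimate, then deduce everything else), but several sub-arguments are genuinely different and some are cleaner than what the paper does. For \eqref{E}, the paper directly estimates $P^\w_x(X_1\cdot\uhat\ge x\cdot\uhat+\gamma)\ge 1-e^{\gamma s}(1-s\delta/2)$, while your split of the drift into a truncated part (bounded by $\gamma$) plus a Cauchy--Schwarz piece is an equally valid alternative. For \eqref{si-bound} the paper decomposes $\lambda_\ell$ into roughly $\ell$ increments each with bounded $p$-th moment; your direct integration of the Chebyshev tail $P^\w_x(\lambda_\ell>n)\le e^{s\ell}(1-s\delta/2)^n$ also gives $C_p\ell^p$ once one notes the crossover between the trivial bound $1$ and the exponential bound occurs at $n\sim\ell$, so the dominant contribution is $\sim \ell^p$. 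For \eqref{X-si-bound} your second supermartingale $\exp\{s\sum_{k\le n}|X_k-X_{k-1}|-sMn\}$ with optional stopping at $\lambda_\ell$ replaces the paper's Cauchy--Schwarz sum over $n$; it works, but one must take the rate in this supermartingale strictly smaller than the one in \eqref{exp-bound} (so that $sM\lambda_\ell$ has a finite exponential moment), and then the crossover argument again produces the polynomial-in-$\ell$ bound. Your argument for \eqref{beta-bound} via optional stopping of $e^{-sX_n\cdot\uhat}(1-s\delta/2)^{-n}$ at the first backtracking time is notably shorter and cleaner than the paper's, which estimates the probability of never reaching each successive lower level and multiplies them together; your version gives directly $P^\w_x(\beta<\infty)\le e^{-s}$. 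For \eqref{Hz-bound}, \eqref{tau-bound}, and \eqref{X-tau-bound} your sketches coincide with the paper's strategy; the main place you are compressing real work is \eqref{tau-bound}, where the uniform-in-$\w$ bound $E^\w_0[\lambda_{M_\beta+a}^p\one\{\beta<\infty\}]\le C_p a^p$ requires the nested H\"older manipulations that the paper spells out (because $M_\beta$, the overshoot $-X_\beta\cdot\uhat$, and the post-$\beta$ climbing time are all coupled through the path and must be decoupled before \eqref{si-bound} can be applied), but you do flag this as the delicate point.
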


The particular point in \eqref{tau-bound}--\eqref{X-tau-bound}
is to make the dependence on $a$ explicit. 
Note that \eqref{beta-bound}--\eqref{tau-bound} give 
\be
E_0(\tau_j-\tau_{j-1})^p<\infty
\label{tau-bound2}
\ee
for all $j\ge 1$.  
In Section \ref{Pinfty_exists}
 we construct an ergodic invariant measure $\P_\infty$
for the environment chain in a way that preserves the conclusions 
of this lemma under $\P_\infty$. 

\begin{proof}
Replacing $x$ by $0$ and $\w$ by $T_x\w$ allows us to assume
 that $x=0$. Then for all 
${s}\in[0,{s}_0/2]$
\begin{align*}
\big| E_0^\w(e^{-{s} X_1\cdot\uhat})-1+{s} E_0^\w(X_1\cdot\uhat)\big|
&\le|\uhat|^2E_0^\w(|X_1|^2 e^{{s}_0|X_1|/2})\frac{{s}^2}2 \\
&\le
(2|\uhat|/{s}_0)^2e^{{s}_0 M}{s}^2=c{s}^2,\end{align*}
where we used moment assumption (M).
Then by  the non-nestling assumption (N)
\begin{align*}
E_0^\w(e^{-{s} X_n\cdot\uhat}|X_{n-1})&=
e^{-{s} X_{n-1}\cdot\uhat}E_{X_{n-1}}^\w(e^{-{s} (X_1-X_0)\cdot\uhat})
\leq e^{-{s} X_{n-1}\cdot\uhat}(1-{s}\delta+c{s}^2).
\end{align*}
Taking now the quenched expectation of both sides and iterating the procedure
proves \eqref{exp-bound}, provided ${s}_1$ is small enough.
To prove \eqref{bounded-step} one can instead show that
\[E_0^\w(e^{{s}\sum_{k=1}^n|X_k-X_{k-1}|})\le e^{{s} n M}.\]
This can be proved by induction as for  \eqref{exp-bound},
using only Hypothesis (M) and H\"older's inequality (to switch to ${s}_0$). 

Concerning \eqref{E}, we have
\[P_0^\w(X_1\cdot\uhat\ge\gamma)\ge(1-e^{\gamma{s}}(1-{s}\delta/2))
\mathop{\longrightarrow}_{\gamma\to0}{s}\delta/2.\]
So taking $\gamma$ small enough and $\kappa$ slightly smaller than 
${s}\delta/2$ does the job.

Notice next that  $P_0^\w(\lambda_1<\infty)=1$ 
 due to \eqref{exp-bound}.
$\P$-a.s. Then 
\begin{align*}
E_0^\w(\lambda_1^p)&\le \sum_{n\ge0}(n+1)^p P_0^\w(\lambda_1>n)
\le \sum_{n\ge0}(n+1)^p P_0^\w(X_n\cdot\uhat\le 1)\\
&\le e^{{s}}\sum_{n\ge0}(n+1)^p E_0^\w(e^{-{s} X_n\cdot\uhat}).
\end{align*}
The last expression  is bounded if ${s}$ is small enough. 
Therefore,
\begin{align*}
E_0^\w(\lambda_\ell^p)&\le E_0^\w\bigg[\,\Big|\sum_{i=1}^{[\ell]+1}
(\lambda_i-\lambda_{i-1})\Big|^p\,\bigg]\le
([\ell]+1)^{p-1}\sum_{i=1}^{[\ell]+1} E_0^\w\big[E_{X_{\lambda_{i-1}}}^\w
(\lambda_1^p)\,\big]\le 
C_p\ell^p.
\end{align*}
Bound \eqref{X-si-bound} is proved similarly: by  the
Cauchy-Schwarz inequality,   Hypothesis (M) and \eqref{exp-bound},
\begin{align*}
E_0^\w(|X_{\lambda_1}|^p)&\le\sum_{n\ge1}E_0^\w(|X_n|^{2p})^{1/2}
P_0^\w(X_{n-1}\cdot\uhat<1)^{1/2}\\
&\le \bigl([2p]!\,{s}_0^{-[2p]}e^{{s}_0 M}e^s\bigr)^{1/2}
\,\sum_{n\ge1}(1-s\delta/2)^{(n-1)/2}n^p\le C_p.
\end{align*}

To prove \eqref{Hz-bound}, write
\begin{align*}
&E_0^\w[(M_{H_z}-z\cdot\uhat)^p\one\{H_z<n\}]\\
&\qquad\le C_p\sum_{\ell>\ell_0}\ell^{p-1} P_0^\w(M_{H_z}-z\cdot\uhat\ge\ell,H_z<n)+
C_p\ell_0^p P_0^\w(H_z<n)\\
&\qquad\le C_p\sum_{\ell>\ell_0}\sum_{k\ge0}\ell^{p-1}
E_0^\w[P_{X_{\lambda_{z\cdot\uhat+\ell}}}^\w(X_k\cdot\uhat-X_0\cdot\uhat\le-\ell)]+
C_p\ell_0^p P_0^\w(H_z<n)\\
&\qquad\le C_p\sum_{\ell>\ell_0}\ell^{p-1} e^{-{s}\ell}+
C_p\ell_0^p P_0^\w(H_z<n)
\le C_p s^{-p}e^{-{s}\ell_0/2}+C_p\ell_0^p P_0^\w(H_z<n).
\end{align*}

To prove \eqref{beta-bound}, note that 
 Chebyshev inequality and \eqref{exp-bound} give, 
 for ${s}>0$ small enough, $\ell\ge1$, and $\P$-a.e.\ $\w$
\begin{align*}
P_0^\w(\lambda_{-\ell+1}<\infty)\le \sum_{n\ge0} P_0^\w(X_n\cdot\uhat\le-(\ell-1))
\le 2({s}\delta)^{-1}e^{-{s}(\ell-1)}.
\end{align*}
On the other hand, for an integer $\ell\ge2$ we have
\begin{align*}
P_0^\w(\lambda_\ell<\beta)\ge\sum_x P_0^\w(\lambda_{\ell-1}<\beta,X_{\lambda_{\ell-1}}=x)
P_x^\w(\lambda_{-\ell+1}=\infty).
\end{align*}
Therefore, taking $\ell$ to infinity one has, for $\ell_0$ large enough,
\[P_0^\w(\beta=\infty)\ge
P_0^\w(\lambda_{\ell_0}<\beta)\prod_{\ell\ge\ell_0}(1-2({s}\delta)^{-1}e^{-{s}\ell}).
\]
 Markov property and \eqref{E} give 
$P_0^\w(\lambda_{\ell_0}<\beta)\ge\kappa^{\ell_0/\gamma+1}>0$ and 
 \eqref{beta-bound} is proved.

Now we will bound the quenched expectation of 
$\lambda_{M_\beta+a}^p\one\{\beta<\infty\}$ uniformly in $\w$.
To this end, for $p_1>p$ and $q_1=p_1(p_1-p)^{-1}$, we have by \eqref{beta-bound}
\begin{align*}
E_0^\w(\lambda_{M_\beta+a}^p\one\{\beta<\infty\})
&\le\sum_{n\ge1}E_0^\w(\lambda_{M_n+a}^p\one\{\beta=n\})\\
&\le\sum_{n\ge1}\Big(E_0^\w(\lambda_{M_n+a}^{p_1})\Big)^{p/p_1}
\Big(P_0^\w(\beta=n)\Big)^{1/q_1}.
\end{align*}
By \eqref{si-bound} one has, for $p_2>p_1>p$ and $q_2=p_2(p_2-p_1)^{-1}$,
\begin{align*}
E_0^\w(\lambda_{M_n+a}^{p_1})
&\le\sum_{m\ge0}\Big(E_0^\w(\lambda_{m+1+a}^{p_2})\Big)^{p_1/p_2}
\Big(P_0^\w([M_n]=m)\Big)^{1/q_2}\\
&\le C_p\sum_{m\ge0}(m+1+a)^{p_1}
\Big(\sum_{i=0}^n P_0^\w(X_i\cdot\uhat\ge m)\Big)^{1/q_2},
\end{align*}
where $C_p$ really depends on $p_1$ and $p_2$, but these are chosen 
arbitrarily, as long as they satisfy $p_2>p_1>p$.
Using \eqref{bounded-step} one has
\[P_0^\w(X_i\cdot\uhat\ge m)\le\left\{
\begin{matrix}
1\hfill&\text{if }m< 2M|\uhat|i,\\
e^{-{s} m}e^{M|\uhat|{s} i}\hfill&\text{if }m\ge 2M|\uhat|i.
\end{matrix}\right.\]
Hence, 
\begin{align*}
E_0^\w(\lambda_{M_n+a}^{p_1})&\le 
C_p
\sum_{m\ge0}(m+1+a)^{p_1}(n\one\{m<2Mn|\uhat|\}+e^{-{s} m/2})^{1/q_2}\\
&\le C_p n(n+a)^{p_1}n^{1/q_2}+C_p\sum_{m\ge0}(m+1)^{p_1}e^{-sm/2q_2}
+C_pa^{p_1}\sum_{m\ge0}e^{-sm/2q_2}\\
&\le C_p n^{1+1/q_2} (n+a)^{p_1}.
\end{align*}
Since $\{\beta=n\}\subset\{X_n\cdot\uhat\le0\}$, one can use
\eqref{exp-bound} to conclude that
\[E_0^\w(\lambda_{M_\beta+a}^p\one\{\beta<\infty\})\le 
C_p
\sum_{n\ge1}n^{p/p_1+p/(p_1q_2)}(n+a)^p(1-{s}\delta/2)^{n/q_1}\le C_p a^p.\]
In the last inequality we have used the fact that $a\ge1$.
Using, \eqref{beta-bound},
the definition of the times $S_k$, and the Markov property, one has
\begin{align*}
&E_0^\w[S_\ell^p\one\{\beta\circ\theta^{S_k}<\infty
\text{ for }0\le k<\ell\text{ and }
\beta\circ\theta^{S_\ell}=\infty\}]\\
&\qquad\le(\ell+1)^{p-1}\Big(E_0^\w[\lambda_a^p
\one\{\beta\circ\theta^{S_k}<\infty
\text{ for }0\le k<\ell\}]\\
&\qquad\qquad\qquad\qquad\qquad+
\sum_{j=0}^{\ell-1} E_0^\w[\lambda_{M_\beta+a}^p\circ\theta^{S_j}
\one\{\beta\circ\theta^{S_k}<\infty
\text{ for }0\le k<\ell\}]\Big)\\
&\qquad\le (\ell+1)^{p-1}\Big(C_p a^p(1-\eta)^\ell
+\sum_{j=0}^{\ell-1}(1-\eta)^j C_p a^p (1-\eta)^{\ell-j-1}\Big)\\
&\qquad\le C_p (\ell+1)^p(1-\eta)^{\ell-1}a^p.
\end{align*}
Bound \eqref{tau-bound} follows then from \eqref{tau}. 
To prove \eqref{X-tau-bound} let $q>p$ and write
\begin{align*}
E_0^\w(|X_{\tau^{(a)}_1+n}-X_n|^p)&\le
\sum_{k\ge0}E_0^\w(|X_{k+1+n}-X_{k+n}|^p |\tau_1^{(a)}|^{p-1} 
\one\{k<\tau_1^{(a)}\})\\
&\le\sum_{k\ge0}k^{-{1-q+p}}E_0^\w(|X_{k+1+n}-X_{k+n}|^p
|\tau_1^{(a)}|^q)\\
&\le C_p\sum_{k\ge0}k^{-{1-q+p}}E_0^\w(|\tau_1^{(a)}|^{2q})^{1/2}
\le C_q\, a^q,
\end{align*}
where we have used Hypothesis (M) along with the Cauchy-Schwarz inequality
in the second to last inequality and \eqref{tau-bound} in the last.
This completes the proof of the lemma.
\end{proof}

\section{Invariant measure and ergodicity}
\label{Pinfty_exists}
For $\ell\in\Z$ define the $\sigma$-algebras 
${\kS}_\ell=\sigma\{\w_x: x\cdot \uhat\geq \ell\}$ on $\Omega$. 
 Denote the restriction of the measure $\P$ to the
 $\sigma$-algebra ${\kS}_\ell$ by $\P_{\vert{\kS}_\ell}$. 
In this section we prove the next two theorems.  The 
variation distance of two probability measures is
$\vard(\mu,\nu)=\sup\{\mu(A)-\nu(A)\}$ with the supremum taken
over measurable sets $A$. 

\begin{theorem}
\label{Th_exist}
Assume $\P$ is product 
non-nestling {\rm(N)} and satisfies the moment hypothesis {\rm(M)}.
Then 
there exists a probability measure $\P_\infty$ on $\Omega$ with 
these properties.  
\begin{enumerate}
\item[{\rm (a)}] $\P_\infty$ is invariant and ergodic
 for the Markov  transition kernel $\Pi$.
\item[{\rm (b)}] There exist 
constants $0<c,C<\infty$ 
such that  for all $\ell\ge 0$ 
\begin{align}
\label{vard-exp}
\vard({\P_\infty}_{\vert{\kS}_\ell},\P_{\vert{\kS}_\ell})\le Ce^{-c\ell}.
\end{align}
\item[{\rm (c)}]  Hypotheses {\rm (N)} and {\rm (M)}
and the  
 conclusions of Lemma {\rm \ref{exponential}} hold
 $\P_\infty$-almost surely. 
\end{enumerate}
\end{theorem}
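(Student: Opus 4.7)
I will build $\P_\infty$ as a Ces\`aro limit of the laws of the ``environment seen from the particle'' chain $T_{X_n}\w$, and then read off each of (a)--(c) from the drift and moment estimates of Section~\ref{nonnestling}. Concretely, set $Q_n(\,\cdot\,)=P_0(T_{X_n}\w\in\cdot\,)$ and $\P_n=\frac1n\sum_{k=0}^{n-1}Q_k$. Hypothesis~(M) makes the marginal of each $\pi_{0,z}$ under $Q_k$ tight uniformly in $k$, so $\{\P_n\}$ is tight on $\Omega$ with its product topology, and any subsequential weak limit $\P_\infty$ is $\Pi$-invariant via the telescoping identity $\P_n\Pi-\P_n=n^{-1}(Q_n-Q_0)\to 0$. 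This handles the invariance half of~(a).

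For the variation-distance bound~(b), fix $\ell\ge 0$ and $A\in\kS_\ell$. The event $\{T_{X_k}\w\in A\}$ depends only on transition vectors at sites of absolute level $\ge X_k\cdot\uhat+\ell$. On the ``good'' event $\{M_k<X_k\cdot\uhat+\ell\}$ the walker has never touched any such site, so conditionally on the path those vectors remain fresh $\P$-samples independent of the walk, yielding
\[
|Q_k(A)-\P(A)|\le 2\,P_0(M_k-X_k\cdot\uhat\ge\ell).
\]
A union bound over the time at which the running maximum is attained, combined with the backtracking estimate that follows from \eqref{exp-bound} and a geometric summation, gives $P_0(M_k-X_k\cdot\uhat\ge\ell)\le Ce^{-c\ell}$ uniformly in $k$. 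Since $\kS_\ell$ is countably generated, the inequality passes from $Q_k$ to $\P_n$ and to the weak limit $\P_\infty$.

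For the ergodicity half of~(a), a bounded $\Pi$-invariant function $f$ is harmonic, so $f(T_{X_n}\w)$ is a bounded $P_0^\w$-martingale converging to a tail-measurable limit $f_\infty$, and $f(\w)=E_0^\w[f_\infty]$ by harmonicity plus bounded convergence. The Sznitman--Zerner regeneration times $\tau_k$ split the walk into slabs $\cS_k$ that are i.i.d.\ under $P_0(\,\cdot\,\vert\,\beta=\infty)$, and this renewal structure is inherited after a single regeneration under $\P_\infty\otimes P_0^\w$ using the forward decoupling from the previous paragraph (the environment ahead of a regeneration position is exponentially close in variation to a fresh $\P$-sample). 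Thus $f_\infty$ is measurable with respect to the tail of an i.i.d.\ sequence, Kolmogorov's 0--1 law forces it to be a constant, and $f$ is $\P_\infty$-a.s.\ constant. Property~(c) then follows because the set of $\w$ satisfying (N) and (M) at every site is a translation-invariant closed subset of $\Omega$ of full $\P$-measure, hence also of full $Q_k$-, $\P_n$-, and $\P_\infty$-measure; the uniform-in-$\w$ conclusions of Lemma~\ref{exponential} are then automatic, since they are deterministic consequences of (N) and (M) at each site.

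The main obstacle is the ergodicity step. The variation-distance bound is one-sided (it only controls the $+\uhat$ half-space), so $\P_\infty$ cannot be coupled to the product measure $\P$ globally, and the annealed i.i.d.\ slab structure must be lifted to a genuine tail-triviality statement valid under $\P_\infty\otimes P_0^\w$. The delicate point is showing that after a single regeneration the conditional distribution of future slabs agrees with the one under $P_0(\,\cdot\,\vert\,\beta=\infty)$, which will require carefully combining the regenerative decomposition with the exponential forward mixing established in~(b).
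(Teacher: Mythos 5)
Your construction of $\P_\infty$ (Ces\`aro averages of $Q_k=P_0(T_{X_k}\w\in\cdot)$, compactness via Hypothesis (M), invariance from $\P_n\Pi-\P_n=n^{-1}(Q_n-Q_0)$), your proof of the variation bound (b), and your observation for (c) that (N), (M) and Lemma~\ref{exponential} are deterministic consequences of the per-site constraints built into the state space, all follow the paper's route. Your derivation of (b) is a small streamlining: instead of writing $d\P_n/d\P=\sum_x P^\w_x(X_n=0)$ and splitting the sum, you condition directly on the walk not overshooting its endpoint by $\ell$; both reduce to the same estimate $P_0(M_k-X_k\cdot\uhat\ge\ell)\le Ce^{-c\ell}$, and lower semicontinuity/convexity of $\vard$ passes it to the weak limit. (One small omission: the invariance step needs the Feller property of $\Pi$, i.e.\ that $\Pi F$ is continuous for continuous local $F$, which the paper checks explicitly using the uniform exponential tail in $\mathcal{P}_0$; you only mention tightness of the step distribution, which is the right ingredient but the continuity of $\Pi F$ should be stated.)

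The ergodicity step is where the proposal has a genuine gap, and you flag the right worry but the plan as stated does not close. Two distinct problems. First, your $f_\infty=\lim_n f(T_{X_n}\w)$ is \emph{not} a function of the regeneration slabs $(\cS_k)_{k\ge 1}$: $f$ is an arbitrary bounded $\Pi$-invariant function of the full environment, and $T_{X_n}\w$ reads $\w$ at all absolute levels, including those strictly below $X_{\tau_1}\cdot\uhat$ which the slabs never record. So Kolmogorov's $0$--$1$ law for the slab sequence cannot be applied to $f_\infty$ directly. Second, even granting $f_\infty=c$ $P_0$-a.s., the resulting identity $f=c$ holds only $\P$-a.s., and as you note $\P_\infty\not\ll\P$ on the full $\sigma$-field, so this does not yield $f=c$ $\P_\infty$-a.s. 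The ``exponential forward mixing'' you propose as the fix only controls $\kS_\ell$ for $\ell\ge 0$, whereas an invariant $f$ can depend on arbitrarily low levels. The paper circumvents both issues by never handling a general invariant $f$ at all: it proves an $L^1$-ergodic theorem (the Ces\`aro averages $n^{-1}\sum_j\Psi(T_{X_j}\w)$ converge to a constant in $L^1(\P_\infty\otimes P^\w_0)$) for bounded $\Psi$ that are $\kS_{-a}$-measurable. For such $\Psi$ the Birkhoff sum over one regeneration block genuinely \emph{is} a function of the slab, the SLLN for i.i.d.\ slabs gives a $P_0$-a.s.\ limit (this is Theorem~\ref{erg-thm}), and the transfer to $\P_\infty$ is carried out by waiting until the walk has risen to level $\e_0\ell$ so that the remaining quantity is $\kS_{\e_0\ell/3}$-measurable and \eqref{vard-exp} applies. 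Ergodicity of $\P_\infty$ then follows because $\kS_{-a}$-measurable bounded functions are $L^1(\P_\infty)$-dense, and the $L^1$ mean ergodic theorem characterizes ergodicity. You should restructure your ergodicity argument along these lines rather than through tail-triviality of a harmonic function.
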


Along the way we also establish this ergodic theorem under the 
original environment measure.  $\E_\infty$
denotes expectation under $\P_\infty$. 

\begin{theorem} 
\label{erg-thm}  Assumptions as in Theorem {\rm\ref{Th_exist}} above. 
Let $\Psi$ be a bounded $\kS_{-a}$-measurable function 
on $\Omega$, for some $0<a<\infty$.  Then 
\be
\lim_{n\to\infty} 
 n^{-1}\sum_{j=0}^{n-1} \Psi(T_{X_j}\w) 
 =  \E_\infty \Psi
\quad\text{$P_0$-almost surely.}
\label{erg-P}\ee
\end{theorem}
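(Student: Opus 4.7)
The plan is to combine Birkhoff's ergodic theorem for the environment chain under $\P_\infty$ with a transfer from $\P_\infty$-a.e.\ to $\P$-a.e.\ $\w$, leveraging the variation-distance bound \eqref{vard-exp} supplied by Theorem \ref{Th_exist}(b).

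Under the joint measure $P^\infty_0(\cdot)=\E_\infty P^\w_0(\cdot)$, the sequence of environments $(T_{X_n}\w)_{n\ge 0}$ is a Markov chain on $(\Omega,\kS)$ with transition kernel $\Pi$ and one-dimensional marginal $\P_\infty$. By Theorem \ref{Th_exist}(a) this chain is stationary and ergodic, so Birkhoff's ergodic theorem gives
\[
\frac{1}{n}\sum_{j=0}^{n-1}\Psi(T_{X_j}\w)\longrightarrow \E_\infty\Psi\qquad P^\infty_0\text{-a.s.},
\]
that is, for $\P_\infty$-a.e.\ $\w$ and $P^\w_0$-a.s. This disposes of the statement under $\P_\infty$.

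To push the conclusion from $\P_\infty$-a.e.\ $\w$ to $\P$-a.e.\ $\w$, I would apply the strong Markov property at the hitting time $\lambda_\ell$. Writing $\w'=T_{X_{\lambda_\ell}}\w$ and $X'_k=X_{\lambda_\ell+k}-X_{\lambda_\ell}$, the shift identity $T_{X_{\lambda_\ell+k}}\w=T_{X'_k}\w'$ together with $E^\w_0\lambda_\ell\le C\ell$ from \eqref{si-bound} reduce the Cesaro convergence under $P^\w_0$ to the Cesaro convergence of the zero-started average in environment $\w'$ under $P^{\w'}_0$. The full claim then follows if the law $\mu_\ell$ of $\w'$ under $P_0$ satisfies $\vard(\mu_\ell,\P_\infty)\to 0$ as $\ell\to\infty$: one applies the $\P_\infty$-a.s.\ conclusion of Birkhoff inside $\mu_\ell$-integration and sends $\ell\to\infty$.

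The main obstacle is the variation-distance claim $\vard(\mu_\ell,\P_\infty)\to 0$. I would prove this by combining \eqref{vard-exp}, the shift invariance of $\P$ (which would make $T_{X_{\lambda_\ell}}\w$ distributed as $\P$ in the absence of conditioning on the trajectory), and the bound \eqref{X-si-bound}, which controls the size of the walk's footprint up to $\lambda_\ell$. The key difficulty is the conditional bias on $\w$ induced by observing $X_{[0,\lambda_\ell]}$: one must show that this footprint affects $\w$ only at sites of level at most $\ell+O(1)$, so that the shifted environment $\w'$ remains within variation distance $Ce^{-c\ell}$ of $\P_\infty$ on the $\sigma$-algebra relevant to the Cesaro limit.
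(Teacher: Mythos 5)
Your first step is fine: assuming ergodicity of $\P_\infty$ for $\Pi$, Birkhoff's ergodic theorem gives the Cesàro convergence $P^\infty_0$-a.s., i.e.\ for $\P_\infty$-a.e.\ $\w$. The problem is in the transfer to $\P$-a.e.\ $\w$.

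The claim that $\vard(\mu_\ell,\P_\infty)\to 0$, where $\mu_\ell$ is the law of $T_{X_{\lambda_\ell}}\w$ under $P_0$, cannot be extracted from \eqref{vard-exp} and is in fact false under the hypotheses at hand (Theorem \ref{Th_exist} assumes only product (N) and (M), not (R)). Each $\mu_\ell$ is absolutely continuous with respect to $\P$, with $L^1(\P)$ density $\sum_x P^\w_x(X_{\lambda_\ell}=0)$. Consequently, if $\P_\infty$ has a singular part relative to $\P$ on the full $\sigma$-algebra $\kS$, then $\vard(\mu_\ell,\P_\infty)$ is bounded below by the mass of that singular part, uniformly in $\ell$. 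The paper points out explicitly, just before the proof, that such singularity can occur under (N) and (M): this is the Bolthausen--Sznitman counterexample cited there, and it is precisely why the paper is careful to state \eqref{erg-P} only for $\kS_{-a}$-measurable $\Psi$ and to prove the variation bound \eqref{vard-exp} only on the one-sided $\sigma$-algebras $\kS_\ell$. Your caveat ``on the $\sigma$-algebra relevant to the Cesàro limit'' does not rescue the argument: the quantity you need to integrate, $P_0^{\w'}\{\text{Cesàro conv.}\}$, is the probability of a tail event of the path and is not $\kS_m$-measurable for any $m$, so the one-sided bound \eqref{vard-exp} never applies to it.

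The paper's proof runs in the opposite direction. It first proves the $P_0$-a.s.\ convergence to a deterministic constant $\conb$ directly, with no reference to $\P_\infty$, by grouping the sum $\sum_j \Psi(T_{X_j}\w)$ over regeneration blocks $[\tau_i,\tau_{i+1})$ and invoking the strong law of large numbers for the i.i.d.\ regeneration slabs of Sznitman--Zerner, together with the moment bound \eqref{tau-bound2}; the $\kS_{-a}$-measurability of $\Psi$ and the choice of regeneration gap $a$ ensure the block sums are genuine functions of the slabs. The variation bound \eqref{vard-exp} is then used for the separate task of proving $L^1(P^\infty_0)$ convergence to the same $\conb$ (equation \eqref{ergaux1}), which simultaneously establishes ergodicity of $\P_\infty$ and identifies $\conb=\E_\infty\Psi$. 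So where you want to pull an a.s.\ statement from $\P_\infty$ back to $\P$, the paper instead proves the $P_0$-a.s.\ statement from scratch via renewal theory and only pushes information forward from $\P$ to $\P_\infty$ (through averages of $\kS_\ell$-measurable functionals), which is exactly what \eqref{vard-exp} permits.
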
 

The ergodic theorem tells us that there is a unique invariant $\P_\infty$ 
in a natural relationship to $\P$, and that 
 $\P_\infty\ll\P$ on each
$\sigma$-algebra  $\kS_{-a}$.  Limit  \eqref{erg-P} cannot hold
for all bounded measurable $\Psi$  on $\Omega$ because this would 
imply the absolute continuity  $\P_\infty\ll\P$ on the entire space
$\Omega$.  A  counterexample that satisfies (N) and (M)
but where the quenched walk is degenerate
was given by Bolthausen and Sznitman \cite[Proposition 1.5]{dynstat}.
Whether regularity  assumption (R) or ellipticity
will make a difference here is not presently clear. 
For the simpler case of  space-time walks (see description of
model  in \cite{qclt-spacetime}) 
 with nondegenerate
$P^\w_0$ absolute continuity $\P_\infty\ll\P$ does hold on the 
entire space. Theorem 3.1 in \cite{dynstat} proves this 
for nearest-neighbor jumps with some weak ellipticity. 
 The general case is no harder. 

\begin{proof}[Proof of Theorems \ref{Th_exist} and \ref{erg-thm}] 
Let  $\P_n(A)=P_0(T_{X_n}\w\in A)$. A computation shows
that \[f_n(\w)=\frac{d\P_n}{d\P}(\w)=\sum_x P_x^\w(X_n=0).\]

By hypotheses (M) and  (N) 
  we can replace the state space $\Omega
=\mathcal P^{\Z^d}$ 
with  the smaller space 
$\Omega_0={\mathcal P}_{0}^{\Z^d}$ where 
\be {\mathcal P}_{0}=
\{(p_z)\in\mathcal P: 
\text{$\sum_z e^{{s}_0|z|}p_z\le e^{{s}_0 M}$ and    
  $\sum_{z} z\cdot\uhat\,p_z\geq\delta$ } \}.
\label{defP0}\ee
Fatou's lemma shows that the exponential bound is preserved by
pointwise convergence in $\cP_0$. 
Then the exponential bound shows that
the non-nestling property is also preserved.  
Thus ${\mathcal P}_{0}$  is compact, and 
then  $\Omega_0$ is compact  under the  product topology.

Compactness gives  a subsequence $\{n_j\}$ along which the averages
${n_j}^{-1}\sum_{m=1}^{n_j}\P_m$ converge weakly to a probability measure
$\P_\infty$ on $\Omega_0$.   Hypotheses (N) and (M)  transfer to 
$\P_\infty$ by virtue of having been included in the 
state space $\Omega_0$. 
Thus the proof of Lemma \ref{exponential} can be repeated for 
$\P_\infty$-a.e. $\w$.  We have verified part (c) of Theorem \ref{Th_exist}. 

Next 
we check that $\P_\infty$ is invariant under $\Pi$. 
Take a  bounded,  continuous  local function $F$ on $\Omega_0$
that  depends only on environments
$(\w_x:|x|\le K)$.  For $\w,\bar\w\in\Omega_0$ 
\begin{align*}
&\bigl\lvert \Pi F(\w)-\Pi F(\bar\w) \bigr\rvert  = 
\bigl\lvert 
E_0^\w[F(T_{X_1}\w)]-E_0^{\bar\w}[F(T_{X_1}\bar\w)]\bigr\rvert \\
&\quad\le 
 \sum_{\abs{z}\le C} \Bigl\lvert \pi_{0,z}(\w)F(T_z\w)
- \pi_{0,z}(\bar\w)F(T_z\bar\w) \Bigr\rvert
+ \norm{F}_\infty 
\sum_{\abs{z}> C} \bigl(\pi_{0,z}(\w)+\pi_{0,z}(\bar\w)\bigr). 
\end{align*}
From this we see that $\Pi F$ is continuous.  
For let $\bar\w\to\w$ in $\Omega_0$ so that  
$\bar\w_{x,z}\to\w_{x,z}$ 
 at each coordinate. Since the last term above is controlled by
the uniform exponential tail bound imposed on $\mathcal P_0$,
continuity of 
$\Pi F$ follows.  
Consequently the weak limit 
${n_j}^{-1}\sum_{m=1}^{n_j}\P_m\to\P_\infty$   together with 
$\P_{n+1}=\P_n\Pi$ implies the 
 $\Pi$-invariance of $\P_\infty$.

We show the exponential bound \eqref{vard-exp} on the variation 
distance next because the ergodicity proof depends on it.
 On metric spaces  total variation distance
can be  characterized  in terms of continuous functions: 
\[\vard(\mu,\nu)=\frac12
\sup\Big\{\int f d\mu-\int f d\nu:f\text{ continuous},\ \sup|f|\le1\Big\}.\]  This makes $\vard(\mu,\nu)$ lower semicontinuous
which we shall find convenient below. 

Fix $\ell>0$. Then 
\be
\begin{split}
\frac{d{\P_n}_{|{\kS}_\ell}}{d{\P}_{|{\kS}_\ell}}&=
\E\big[\sum_x P_x^\w(X_n=0,\max_{j\leq n} X_j\cdot\uhat\leq \ell/2)
\big\vert {\kS}_\ell\big]\\
&\qquad
+\sum_x\E[P_x^\w(X_n=0,\max_{j\leq n} X_j\cdot\uhat>\ell/2)|{\kS}_\ell].
\end{split} \label{RNder}\ee
The $L^1(\P)$-norm of the second term is bounded by 
\[I_{n,\ell}=
P_0(\max_{j\leq n} X_j\cdot\uhat>X_n\cdot\uhat+\ell/2)\]
and \eqref{exp-bound} tells us that
\begin{align}
I_{n,\ell}\leq\sum_{j=0}^n
e^{-{s} \ell/2}(1-{s}\delta/2)^{n-j}\leq Ce^{-{s} \ell/2}.
\label{Ink}
\end{align}
The integrand in the first term of \eqref{RNder}
 is measurable with
respect to $\sigma(\w_x:x\cdot\uhat\le \ell/2)$ and therefore independent of 
$\kS_{\ell}$. The distance between the whole first term and 1 is then 
$\Ord(I_{n,\ell})$.   Thus for  large enough $\ell$,
\begin{align*}
\vard({\P_n}_{|{\kS}_\ell},{\P}_{|{\kS}_\ell})\leq
\int\Bigl\lvert\frac{d{\P_n}_{|{\kS}_\ell}}{d{\P}_{|{\kS}_\ell}}-1\Bigr\rvert d\P
\leq 2I_{n,\ell}\leq Ce^{-c \ell}.
\end{align*}

By the construction of $\P_\infty$ as the Ces\`aro limit  
and by the lower semicontinuity and convexity 
 of the variation distance 
\[\vard({\P_\infty}_{|{\kS}_\ell},{\P}_{|{\kS}_\ell})\leq
\linf_{j\rightarrow\infty} 
n_j^{-1}\sum_{m=1}^{n_j}\vard({\P_m}_{|{\kS}_\ell},{\P}_{|{\kS}_\ell})
\leq Ce^{-c\ell}.
\]
Part (b) has been verified. 

As the last point 
we prove the ergodicity. 
  Recall  the notation $E^\infty_0=\E_\infty E^\w_0$.  
Let $\Psi$ be a bounded local function on $\Omega$. It suffices
to prove that for some constant $\conb$  
\be
\lim_{n\to\infty} 
E^\infty_0 \Bigl\lvert n^{-1}\sum_{j=0}^{n-1} \Psi(T_{X_j}\w) 
-\conb\,\Bigr\rvert = 0.
\label{ergaux1}
\ee
By an approximation it follows from this that for all $F\in L^1(\P_\infty)$
\be
n^{-1}\sum_{j=0}^{n-1} \Pi^jF(\w)  \rightarrow \E_\infty F 
\quad\text{in $L^1(\P_\infty)$.}
\label{erqaux2}
\ee
  By standard theory (Section IV.2 in \cite{rosenblatt})
this is equivalent to ergodicity of $\P_\infty$ for the transition $\Pi$. 

We combine the proof of Theorem \ref{erg-thm} with the proof of
 \eqref{ergaux1}.  For this purpose let $\Psi$ be $\kS_{-a+1}$-measurable
with $a<\infty$.   Take $a$ to be the parameter in 
the regeneration times \eqref{tau}.    Let 
\[
\varphi_i=\sum_{j=\tau_{i}}^{\tau_{i+1}-1} \Psi(T_{X_j}\w).
\]
From the  i.i.d.\ regeneration slabs and the moment 
bound \eqref{tau-bound2}  follows the limit 
\be
\lim_{m\to\infty} 
 m^{-1}\sum_{j=0}^{\tau_m-1}   \Psi(T_{X_j}\w) 
=
\lim_{m\to\infty} 
 m^{-1}\sum_{i=0}^{m-1} \varphi_i 
=\cona \qquad\text{$P_0$-almost surely,}
\label{ergaux3} \ee
where the constant $\cona$ is defined by the limit.  

To justify this more precisely,  recall the 
 definition of regeneration slabs given in \eqref{regenslab}.
  Define  a function  $\Phi$
of the regeneration slabs  by
\[
\Phi(\cS_0,\cS_1,\cS_2,\dotsc)=\sum_{j=\tau_1}^{\tau_{2}-1} 
\Psi(T_{X_{j}}\w).
\]
 Since each regeneration slab has thickness in $\uhat$-direction
 at least $a$, the $\Psi$-terms in the sum do not read the environments
below level zero and consequently the sum is a function of 
$(\cS_0,\cS_1,\cS_2,\dotsc)$.
Next one can check for $k\ge 1$ that 
\[
\Phi(\cS_{k-1},\cS_k,\cS_{k+1},\dotsc) =
\sum_{j=\tau_1(X_{\tau_{k-1}+\,\cdot\,}-X_{\tau_{k-1}})}^{\tau_{2}
(X_{\tau_{k-1}+\,\cdot\,}-X_{\tau_{k-1}})-1} 
\Psi\bigl(T_{X_{\tau_{k-1}+j}-X_{\tau_{k-1}}}(T_{X_{\tau_{k-1}}}\w)\bigr)
=\varphi_k. 
\]
Now the sum
of $\varphi$-terms in \eqref{ergaux3} can be decomposed into
\[
\varphi_0+\varphi_1+ 
\sum_{k=1}^{m-2}\Phi(\cS_{k},\cS_{k+1},\cS_{k+2},\dotsc). 
\]
The limit \eqref{ergaux3} follows because the slabs  $(\cS_k)_{k\ge 1}$ are 
i.i.d.\ and the finite initial terms 
 $\varphi_0+\varphi_1$ are eliminated
 by the $m^{-1}$ factor. 

Let $\alpha_n=\inf\{k: \tau_k\ge n\}$. 
 Bounds \eqref{beta-bound}--\eqref{tau-bound} give finite moments
of all orders  to the increments
 $\tau_k-\tau_{k-1}$ and this 
 implies that $n^{-1}(\tau_{\alpha_n-1}-\tau_{\alpha_n})
\to 0$ $P_0$-almost surely. Consequently \eqref{ergaux3} 
yields the next limit,   for another
constant $\conb$: 
\be
\lim_{n\to\infty} 
 n^{-1}\sum_{j=0}^{n-1}   \Psi(T_{X_j}\w) 
= \conb \qquad\text{$P_0$-almost surely.}
\label{ergaux5}
\ee
By boundedness this limit 
is valid also in  $L^1(P_0)$ and the initial
point of the walk is immaterial by shift-invariance of $\P$.
Let $\ell>0$ and  choose a small 
$\e_0>0$.    Abbreviate 
\[
G_{n,x}(\w)=E^\w_x\Bigl[\;
\Bigl\lvert n^{-1}\sum_{j=0}^{n-1} \Psi(T_{X_j}\w) 
-\conb\,\Bigr\rvert \one\bigl\{\,\inf_{j\ge 0}X_j\cdot\uhat\ge 
X_0\cdot\uhat-\e_0\ell/2\bigr\}\,\Bigr].
\]
  Let \[\Ic=\{x\in\Z^d: x\cdot\uhat\ge \e_0 \ell,
\abs{x}\le A\ell\}\] for some constant $A$. 
Use the bound \eqref{vard-exp} on the variation
distance and the fact that  the functions $G_{n,x}(\w)$ are 
uniformly bounded over all $x,n,\w$, and, if $\ell$ is large enough
relative to $a$ and $\e_0$,   for $x\in\Ic$ 
the function $G_{n,x}$ is $\kS_{\e_0\ell/3}$-measurable.
\begin{align*}
&\P_\infty\Bigl\{ \;\sum_{x\in \Ic}P^\w_0[X_\ell=x] G_{n,x}(\w)\ge \e_1
\Bigr\}
\le \sum_{x\in \Ic} \P_\infty\{ G_{n,x}(\w)\ge \e_1/(C\ell^d)\} \\
&\le  C\ell^d \e_1^{-1}  \sum_{x\in \Ic} \E_\infty G_{n,x} 
\le  C\ell^d \e_1^{-1}  \sum_{x\in \Ic} \E G_{n,x} 
+  C\ell^{2d} \e_1^{-1} e^{-c\e_0\ell/3}.
\end{align*}
By \eqref{ergaux5} $\E G_{n,x}\to 0$ for any fixed $x$.
Thus from above we get for any fixed $\ell$, 
\be
\lim_{n\to\infty} E^\infty_0\bigl[ \,\one\{X_\ell\in\Ic\}
G_{n,X_\ell} \bigr] \le \e_1 + C\ell^{2d} \e_1^{-1} e^{-c\e_0\ell/3}.
\label{ergaux8} \ee
The reader should bear in mind that the constant $C$ is 
changing from line to line.  Finally, we write 
\begin{align*}
&\varlimsup_{n\to\infty} 
E^\infty_0 \Bigl\lvert n^{-1}\sum_{j=0}^{n-1} \Psi(T_{X_j}\w) 
-\conb\,\Bigr\rvert\\
&\le \varlimsup_{n\to\infty} 
E^\infty_0 \Bigl[ \one\{X_\ell\in\Ic\}\,
\Bigl\lvert n^{-1}\sum_{j=\ell}^{n+\ell-1} \Psi(T_{X_j}\w) 
-\conb\,\Bigr\rvert \one\bigl\{\inf_{j\ge \ell}X_j\cdot\uhat\ge 
X_\ell\cdot\uhat-\e_0\ell/2\bigr\} \,\Bigr]\\
&\quad\qquad  +\; C P^\infty_0 \{X_\ell\notin\Ic\}
\;+\; C P^\infty_0 \bigl\{\inf_{j\ge \ell}X_j\cdot\uhat<
X_\ell\cdot\uhat-\e_0\ell/2\bigr\}\\
&\le \varlimsup_{n\to\infty} E^\infty_0\bigl[ \,\one\{X_\ell\in\Ic\}
G_{n,X_\ell} \bigr] \;+\;
C P^\infty_0 \{X_\ell\cdot\uhat<\e_0\ell\}\\
&\quad\qquad  +\;C P^\infty_0 \{\,\lvert X_\ell\rvert >A\ell\}
\;+\;
C E^\infty_0 P^\w_{X_\ell} 
\bigl\{\inf_{j\ge0}X_j\cdot\uhat< X_0\cdot\uhat-\e_0\ell/2\bigr\}.
\end{align*} 
As pointed out, $\P_\infty$ satisfies Lemma \ref{exponential}
 because hypotheses
(N) and (M) were built into the space $\Omega_0$ that supports
$\P_\infty$. This enables us to make the error probabilities
above small.   Consequently, if we first pick $\e_0$ and $\e_1$ 
small enough, $A$ large enough, then $\ell$ large, and apply \eqref{ergaux8},
we will have shown \eqref{ergaux1}.  Ergodicity of $\P_\infty$
has been shown.  
This concludes the proof of Theorem \ref{Th_exist}. 

Thereom \ref{erg-thm} has also been established. It follows from
the combination of \eqref{ergaux1} and \eqref{ergaux5}.
\end{proof}

\section{Change of measure}
\label{substitution}
There are several stages in the proof where we need to 
check that a desired conclusion is not affected by
choice between  $\P$ and  $\P_\infty$.  We collect 
all instances of such transfers in this section. 
The standing assumptions of this section are that 
 $\P$ is an i.i.d.\ product measure that   satisfies
 Hypotheses {\rm(N)} and {\rm(M)}, and that
 $\P_\infty$ is the measure given by Theorem {\rm \ref{Th_exist}}.
We  show first that  $\P_\infty$ can be 
 replaced with $\P$  in the key condition \eqref{cond} of
Theorem \ref{RS}.

\begin{lemma}
\label{velocity}
The velocity $v$ defined by {\rm\eqref{defv}} satisfies
$v=\E_\infty(D)$.  
There exists a constant C such that
\begin{align}
\label{velocity-bound}
|E_0(X_n)-n\E_\infty(D)|\le C \qquad\text{for all $n\ge 1$.} 
\end{align}
\end{lemma}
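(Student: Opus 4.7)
The lemma contains two claims: the identification $v=\E_\infty(D)$, and the uniform bound $|E_0(X_n)-n\E_\infty(D)|\le C$. The plan is to get the first from Theorem \ref{erg-thm} combined with a martingale decomposition, and the second from the renewal structure of the Sznitman--Zerner regeneration times. These rely on essentially disjoint inputs, so I would separate them.

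First I would verify $v=\E_\infty(D)$. The drift $D(\w)=\sum_z z\,\pi_{0z}(\w)$ is uniformly bounded by Hypothesis (M) (for instance $|D|\le e^{s_0M}/s_0$), so the process $M_n:=X_n-\sum_{k=0}^{n-1}D(T_{X_k}\w)$ is a mean-zero $P_0^\w$-martingale with uniformly $L^2$-bounded increments. Doob's inequality gives $M_n/n\to0$ $P_0$-a.s.; combined with the annealed LLN \eqref{lln1} this shows
\[
n^{-1}\sum_{k=0}^{n-1}D(T_{X_k}\w)\;\longrightarrow\;v\qquad P_0\text{-a.s.}
\]
On the other hand, Theorem \ref{erg-thm} applied to the bounded $\kS_0$-measurable function $\Psi=D$ gives the same average converging $P_0$-a.s.\ to $\E_\infty(D)$. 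Comparing the two limits forces $v=\E_\infty(D)$.

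For the $O(1)$ bound I would exploit the Sznitman--Zerner regeneration slabs. Fix $a=1$ and set $T_k=\tau_k-\tau_{k-1}$, $\xi_k=X_{\tau_k}-X_{\tau_{k-1}}$, and $N(n)=\max\{k\ge0:\tau_k\le n\}$, with $\mu=E_0T_2$ and $m=E_0\xi_2$, so that $v=m/\mu$ by \eqref{defv}. Lemma \ref{exponential} and the Sznitman--Zerner i.i.d.\ property yield that the pairs $(T_k,\xi_k)_{k\ge2}$ are i.i.d.\ with all polynomial moments finite. Writing $X_n=X_{\tau_{N(n)}}+(X_n-X_{\tau_{N(n)}})$, the overshoot $E_0|X_n-X_{\tau_{N(n)}}|$ is bounded, because the size-biased slab $(\tau_{N(n)+1}-\tau_{N(n)},\sup_{\tau_{N(n)}\le j\le\tau_{N(n)+1}}|X_j-X_{\tau_{N(n)}}|)$ has expectation bounded uniformly in $n$ thanks to the all-moments hypothesis on an unbiased slab. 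For the main term, decompose $X_{\tau_{N(n)}}=\xi_1\one\{N(n)\ge1\}+\sum_{k=2}^{N(n)}\xi_k$. The first piece is $O(1)$ by \eqref{X-tau-bound} plus the rapid decay of $P_0(\tau_1>n)$. For the second, $N(n)+1$ is a stopping time for the i.i.d.\ pairs $(\xi_k,T_k)_{k\ge2}$ (conditionally on the initial slab $(\xi_1,T_1)$), so Wald's identity applied conditionally, together with a bounded correction for the size-biased overshoot increment $\xi_{N(n)+1}$, gives $E_0[\sum_{k=2}^{N(n)}\xi_k]=m\,E_0(N(n))+O(1)$. The delayed-renewal form of Blackwell's theorem, applicable because both $T_1$ and $T_2$ have finite variance (and much more, by \eqref{tau-bound}), yields $|E_0(N(n))-n/\mu|\le C$. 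Assembling these pieces gives $E_0(X_n)=mn/\mu+O(1)=nv+O(1)$, which combined with $v=\E_\infty(D)$ completes the proof.

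The principal technical ingredient is the uniform Blackwell estimate $|E_0(N(n))-n/\mu|\le C$; under the exponential-moment hypothesis on $T_k$ from \eqref{tau-bound} this is well within the scope of the classical renewal estimate. The remaining work is bookkeeping on the initial block $(\xi_1,T_1)$, the overshoot $X_n-X_{\tau_{N(n)}}$, and the tail contributions from $\{\tau_1>n\}$, all of which are handled uniformly in $n$ by the quenched moment bounds of Lemma \ref{exponential}.
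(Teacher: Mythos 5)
Your proof is correct, and both halves land on the same underlying facts the paper uses, but the first half takes a somewhat different route. For $v=\E_\infty(D)$ the paper works directly with the Ces\`aro construction of $\P_\infty$: since $D$ is bounded and continuous on $\Omega_0$, $\E_\infty(D)=\lim n_j^{-1}\sum_k E_0[D(T_{X_k}\w)]$, and the $L^1(P_0)$ law of large numbers plus the Markov property identify the same Ces\`aro limit with $v$. You instead decompose $X_n$ into a quenched martingale plus the drift sum, use the martingale SLLN to get $n^{-1}\sum_k D(T_{X_k}\w)\to v$ $P_0$-a.s., and then invoke Theorem \ref{erg-thm} with $\Psi=D$ (which is legitimate: $D$ is bounded by (M) and $\sigma(\w_0)$-measurable, hence $\kS_{-a}$-measurable, and Theorem \ref{erg-thm} is established before this lemma). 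Your route costs a martingale argument but buys independence from the specific Ces\`aro construction of $\P_\infty$ -- it would survive any construction for which Theorem \ref{erg-thm} holds. For the $O(1)$ bound the two arguments are essentially identical in structure (Wald's identity on the i.i.d.\ regeneration increments plus control of the boundary/overshoot terms); the only difference is that the paper proves the key renewal estimate by hand, bounding all moments of the forward recurrence time $B_n=\tau_{\alpha_n}-\tau_1-n$ via a discrete renewal equation and induction, whereas you cite the classical second-order (Blackwell-type) estimate $|E_0 N(n)-n/\mu|\le C$, which indeed only needs a finite second moment of the inter-regeneration times and is amply covered by \eqref{tau-bound}. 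Both are sound; the paper's version is self-contained, yours is shorter but outsources the renewal estimate.
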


\begin{proof} We start by showing $v=\E_\infty(D)$.  
The uniform exponential tail in the definition
\eqref{defP0} of $\cP_0$ makes the function $D(\w)$ bounded and continuous on
$\Omega_0$.  By the Ces\`aro definition of $\P_\infty$,
\[
\E_\infty(D) = \lim_{j\to\infty} \frac1{n_j}\sum_{k=0}^{n_j-1} \E_k(D)
 = \lim_{j\to\infty} \frac1{n_j}\sum_{k=0}^{n_j-1} E_0[D(T_{X_k}\w)].
\]
The moment bounds \eqref{beta-bound}--\eqref{X-tau-bound} imply
that the law of large numbers $n^{-1}X_n\to v$ holds also in $L^1(P_0)$.
From this and the Markov property 
\[
v=\lim_{n\to\infty} \frac1n \sum_{k=0}^{n-1} E_0(X_{k+1}-X_{k})
=\lim_{n\to\infty} \frac1n \sum_{k=0}^{n-1} E_0[D(T_{X_k}\w)].
\]
We have proved  $v=\E_\infty(D)$.  

 The variables 
$(X_{\tau_{j+1}}-X_{\tau_{j}}, \tau_{j+1}-\tau_{j})_{j\ge 1}$ are
i.i.d.  with sufficient moments by \eqref{beta-bound}--\eqref{X-tau-bound}.
With $\alpha_n= \inf\{j\ge 1: \tau_{j}-\tau_1\ge n\}$ Wald's identity gives
\[
E_0 (X_{\tau_{\alpha_n}}-X_{\tau_1})=E_0(\alpha_n)
E_0(X_{\tau_1}\vert\beta=\infty) 
\quad\text{and}\quad
E_0 ( \tau_{\alpha_n}-\tau_1)
=E_0(\alpha_n) E_0({\tau_1}\vert\beta=\infty).
\] 
Consequently, by the definition \eqref{defv} of $v$,
\begin{align*}
E_0(X_n) - nv = 
v E_0(\tau_{\alpha_n}-\tau_1-n)- 
E_0 (X_{\tau_{\alpha_n}}-X_{\tau_1}-X_{n}).
\end{align*}

It remains to show that   $E_0(\tau_{\alpha_n}-\tau_1-n)$  and
$E_0(X_{\tau_{\alpha_n}}-X_{\tau_1}-X_n)$ are bounded by constants. 
We do this with a simple renewal argument.
Let $Y_j=\tau_{j+1}-\tau_{j}$ for $j\ge 1$ and $V_0=0$, 
 $V_m=Y_1+\dotsm+Y_m$.  
The quantity to bound is  the forward recurrence
time  
$B_n=\min\{k\ge 0: n+k\in \{V_m\}\}$ because 
$\tau_{\alpha_n}-\tau_1-n=B_n$. 

We can write
\[
B_n = (Y_1-n)^+  +   \sum_{k=1}^{n-1} \one\{Y_1=k\} B_{n-k}\circ\theta
\]
where $\theta$ shifts the sequence $\{Y_k\}$ 
and makes  $B_{n-k}\circ\theta$  independent of $Y_1$. 
The two main terms on the right multiply to zero,  so for any integer 
$p\ge 1$
\[
B_n^p = ((Y_1-n)^+)^p  +
          \sum_{k=1}^{n-1} \one\{Y_1=k\} (B_{n-k}\circ\theta)^p.
\]
Set $z(n) = E_0((Y_1-n)^+)^p$.  Moment bounds
 \eqref{beta-bound}--\eqref{tau-bound} give 
  $E_0(Y_1^{p+1})<\infty$ which 
implies $\sum z(n) < \infty$.  Taking expectations and using independence
gives the  discrete renewal equation 
\[
E_0B_n^p = z(n)  +   \sum_{k=1}^{n-1} P_0(Y_1=k) E_0B_{n-k}^p.
\]
Induction on $n$ shows  that   $E_0B_n^p \le \sum_{k=1}^n z(k) \le C(p)$
for all $n$.
In particular,   $E_0(\tau_{\alpha_n}-\tau_1-n)^p$ is bounded  by a constant 
uniformly over $n$.
To extend this to $E_0\lvert X_{\tau_{\alpha_n}}-X_{\tau_1}-X_n\rvert^p$ 
apply an argument 
like the one given for \eqref{X-tau-bound} at the end of Section 
\ref{nonnestling}. 
\end{proof}

\begin{proposition}
\label{exchange}
Assume that there exists an $\bar\alpha<1/2$ such that
\begin{align}
\E\left(\abs{E_0^\w(X_n)-E_0(X_n)}^2\right)=\Ord(n^{2\bar\alpha}).
\label{cond1}
\end{align}
Then condition \eqref{cond} is satisfied for some $\alpha<1/2$.
\end{proposition}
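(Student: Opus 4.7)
The plan is to reduce the $\P_\infty$-bound \eqref{cond} to the given $\P$-bound \eqref{cond1} via two substitutions: switching the centering from $n\E_\infty(D)$ to $E_0(X_n)$, and then switching the measure from $\P_\infty$ to $\P$. Lemma \ref{velocity} already takes care of the centering: $v = \E_\infty(D)$ and $|E_0(X_n) - nv| \le C$, so by $|a+b|^2 \le 2|a|^2 + 2|b|^2$ it suffices to show $\E_\infty \Phi = \Ord(n^{2\alpha})$ for some $\alpha<1/2$, where $\Phi(\w) := |E_0^\w(X_n) - E_0(X_n)|^2$. Note that $\Phi$ is bounded by $Cn^2$ uniformly in $\w$ by Hypothesis (M).

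For the measure swap, the variation bound \eqref{vard-exp} only gives decay for $\kS_\ell$-measurable functions with $\ell > 0$, and $\Phi$ itself is not $\kS_\ell$-measurable for any positive $\ell$, since the walk starts at the origin. I would manufacture high-level measurability by combining a truncation with a shift. Fix parameters $L > \ell > 0$ and a lattice vector $y$ with $y\cdot\uhat = L$. Let
\[
g_{y,\ell}(\w) = E_y^\w\bigl[(X_n - y)\,\one\{\min_{k\le n}(X_k - y)\cdot\uhat \ge -\ell\}\bigr], \qquad \bar g_\ell = \E\,g_{0,\ell}(\w),
\]
and $\tilde\Phi_{y,\ell}(\w) = |g_{y,\ell}(\w) - \bar g_\ell|^2$. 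The exponential backtracking estimate \eqref{exp-bound} applied from $y$, combined with Cauchy--Schwarz and Hypothesis (M), gives $|g_{y,\ell}(\w) - (E_y^\w(X_n) - y)| \le Cn e^{-s\ell/2}$, whence $|\tilde\Phi_{y,\ell}(\w) - \Phi(T_y \w)| \le Cn^2 e^{-s\ell/2}$ uniformly in $\w$, and $\tilde\Phi_{y,\ell}$ is $\kS_{L-\ell}$-measurable. Then \eqref{vard-exp} yields $|\E_\infty \tilde\Phi_{y,\ell} - \E\tilde\Phi_{y,\ell}| \le Cn^2 e^{-c(L-\ell)}$, and shift-invariance of $\P$ gives $\E\tilde\Phi_{y,\ell} = \E\Phi + \Ord(n^2 e^{-s\ell/2}) = \Ord(n^{2\bar\alpha}) + \Ord(n^2 e^{-s\ell/2})$. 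Choosing $\ell = L/2$ and $L = C_1 \log n$ for $C_1$ large enough makes all errors $o(1)$, producing $\E_\infty[\Phi \circ T_y] = \Ord(n^{2\bar\alpha})$.

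The last step --- and the main obstacle --- is to bridge $\E_\infty[\Phi\circ T_y]$ back to $\E_\infty\Phi$, since $\P_\infty$ is only $\Pi$-invariant, not shift-invariant. Using $\Pi$-invariance, $\E_\infty\Phi = \E_\infty E_0^\w[\Phi(T_{X_k}\w)]$ for any $k \ge 0$; taking $k = \lceil 2L/\delta \rceil$, the estimate \eqref{exp-bound} forces $X_k\cdot\uhat \ge L$ with uniform quenched probability $1 - \Ord(e^{-sL/2})$, handling the bad event, and on the good event the truncation-plus-variation argument above reapplies pointwise in $y = X_k$. The delicate point is decoupling: under $P_0^\w$ the walk $X_k$ reads $\w$ at all levels traversed along the way, while $\tilde\Phi_{y,\ell}$ reads $\w$ at levels $\ge y\cdot\uhat - \ell$, so the factor $P_0^\w(X_k = y)$ inside $\E_\infty E_0^\w[\cdot]$ couples to $\tilde\Phi_{y,\ell}$ through a strip of width $O(\ell)$. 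To clean this up I would replace $k$ by the regeneration time $\tau_1^{(L)}$ (uniform moments by Lemma \ref{exponential}): since $X_{\tau_1^{(L)}}\cdot\uhat$ is the running maximum of the path through time $\tau_1^{(L)}$, the pre-regeneration path reads environments only at levels $\le X_{\tau_1^{(L)}}\cdot\uhat$, and the remaining $\ell$-wide overlap with $\tilde\Phi_{X_{\tau_1^{(L)}},\ell}$ is absorbed into the truncation error. The identity $\E_\infty \Phi = \E_\infty E_0^\w[\Phi(T_{X_{\tau_1^{(L)}}}\w)]$ then follows from stationarity of the environment chain under $\P_\infty$ and optional stopping, closing the argument with the same error estimates as before.
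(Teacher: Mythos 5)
Your first two paragraphs follow the same route as the paper's proof: recenter via Lemma \ref{velocity}, push the walk up to a high level, exploit the $\kS$-measurability (above a high level) of a suitably truncated quenched expectation together with the variation bound \eqref{vard-exp}, then use shift-invariance of $\P$ and \eqref{cond1}. The gap is in your third paragraph, and it affects both of the routes you offer for returning from $\E_\infty[\Phi\circ T_y]$ to $\E_\infty\Phi$. First, the identity $\E_\infty\Phi=\E_\infty E_0^\w[\Phi(T_{X_{\tau_1^{(L)}}}\w)]$ is false: $\Pi$-invariance gives $\E_\infty \Pi^k\Phi=\E_\infty\Phi$ only for deterministic $k$, and there is no ``optional stopping for stationarity'' --- the environment chain sampled at a stopping time (in particular at a regeneration time) need not have the invariant law. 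Second, even with the legitimate deterministic choice $k=\lceil 2L/\delta\rceil$, the phrase ``reapplies pointwise in $y=X_k$'' conceals the real difficulty: your estimate controls $\E_\infty\tilde\Phi_{y,\ell}$, an average over $\w$, and cannot be invoked $\w$-by-$\w$ inside $E_0^\w$. Writing out $\E_\infty E_0^\w[\cdots]=\E_\infty\sum_y P_0^\w(X_k=y)\,\tilde\Phi_{y,\ell}(\w)$, the coefficient and $\tilde\Phi_{y,\ell}$ are correlated under $\P_\infty$, and no product structure is available there to decouple them; passing to $\tau_1^{(L)}$ does not remove the $\ell$-wide overlap of the environments they read, and that overlap is a dependence problem, not a truncation error, so it cannot be ``absorbed into the truncation error.''

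The repair is not decoupling but deterministic domination, which is exactly how the paper handles this point. One bounds the quenched hitting probability by an explicit nonrandom weight --- in the paper, $P_0^\w(X_m=x,\lambda_k=m)\le p_{x,m,k}$ with $\sum_{x,m}p_{x,m,k}=\Ord(k^{d+1})$; in your deterministic-$k$ variant, $P_0^\w(X_k=y)\le\one\{|y|\le 2Mk\}$ up to errors that are exponentially small by \eqref{exp-bound} and Hypothesis (M) --- then pulls the sum over the polynomially many relevant $y$ outside $\E_\infty$ and applies the fixed-$y$ estimate of your second paragraph to each term. The resulting polynomial prefactor is harmless because the level is taken small relative to $n$ (the paper uses level $k=[n^\rho]$ with $\rho(d+1)+2\bar\alpha<1$; your $L=C_1\log n$ would do as well). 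With that substitution for your third paragraph, the argument closes and coincides in substance with the paper's proof of Proposition \ref{exchange}.
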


\begin{proof}
By \eqref{velocity-bound} assumption \eqref{cond1} 
turns into 
\begin{align}
\label{cond1-modified}
\E\left(\abs{E_0^\w(X_n)-nv}^2\right)=\Ord(n^{2\bar\alpha}).
\end{align}
In the rest of this proof we use the conclusions of Lemma \ref{exponential}
under $\P_\infty$ instead of $\P$. This is justified by
part (c) of Theorem \ref{Th_exist}. 

For $k\geq1$, recall that 
$\lambda_k=\inf\{n\geq0:(X_n-X_0)\cdot\uhat\geq k\}$.
Take $k=[n^\rho]$ for a small enough $\rho>0$.   The point of the
proof is to let the walk run up to a high level $k$ so 
that expectations under $\P_\infty$ can be profitably related to 
expectations under $\P$ through the variation distance bound
\eqref{vard-exp}.   Estimation is needed to remove the
dependence on the environment on low levels.  
 First 
compute as follows.  
\begin{align}
&\E_\infty\left[\abs{E_0^\w(X_n-nv)}^2\right]
=\E_\infty\left[\abs{E_0^\w(X_n-nv,\lambda_k\leq n)+
E_0^\w(X_n-nv,\lambda_k>n)}^2\right]
\nn\\
\begin{split}
&\!\leq2\E_\infty\!\left[\,\abs{E_0^\w(X_n-X_{\lambda_k}-(n-\lambda_k)v,\lambda_k\leq n)
-E_0^\w(\lambda_k v,\lambda_k\leq n)+E_0^\w(X_{\lambda_k},\lambda_k\le n)^2}
\,\right]
\\
&\qquad+\Ord\bigl(n^2\E_\infty[P_0^\w(\lambda_k>n)]\bigr)
\end{split}
\nn\\
\begin{split}
&\!\leq8\E_\infty\biggl[\;\Bigl\lvert\;
\sum_{\substack{0\leq m\leq n\\ x\cdot\uhat\geq k}}
P_0^\w(X_m=x,\lambda_k=m)E_x^\w\{X_{n-m}-x-(n-m)v\}
\Bigr\rvert^2\,\biggr]
\\
&\qquad+\Ord(k^2+n^2e^{{s} k}(1-{s}\delta/2)^n).
\end{split}
\label{part1}
\end{align}
The last error term above is $\Ord(n^{2\rho})$. 
We  used the Cauchy-Schwarz inequality and Hypothesis (M) to get the second
term in the first inequality, and then \eqref{exp-bound},
\eqref{si-bound}, and \eqref{X-si-bound} in 
the last inequality. 

To handle the expectation on line  \eqref{part1}  we introduce a spanning
set of vectors that satisfy the main assumptions that $\uhat$ does. 
Namely, let  $\{\uhat_i\}_{i=1}^d$ span $\R^d$ and satisfy 
these conditions: 
 $|\uhat-\uhat_i|\le \delta/(2M)$, where $\delta$ and $M$ are
the constants from Hypotheses (N) and (M), and  
\begin{align}
\label{vectors}
\uhat=\sum_{i=1}^d\alpha_i\uhat_i \ \text{ with }\alpha_i>0.
\end{align}
Then non-nestling (N) holds for each $\uhat_i$ with constant $\delta/2$,
and all the conclusions of Lemma \ref{exponential}
hold when $\uhat$ is replaced by $\uhat_i$ and $\delta$ by $\delta/2$.
 Define the event  $A_k=\{\inf_i X_i\cdot\uhat\ge k\}$ and the set
\begin{align*}
\Lambda=\{x\in\Z^d:\min_i x\cdot\uhat_i\ge1\}.
\end{align*}
The point of introducing  $\Lambda$ is that the number of points $x$  
in $\Lambda$ on level $x\cdot\uhat=\ell>0$ is of order $\Ord(\ell^{d-1})$.  

By  Jensen's inequality
the expectation on line  \eqref{part1} is bounded by
\begin{align}
&2\E_\infty\biggl[\;\sum_{\substack{x\in\Lambda,\,x\cdot\uhat\ge k
\\0\le m\le n}}
P_0^\w(X_m=x,\lambda_k=m)\bigl|E_x^\w\{X_{n-m}-x-(n-m)v,A_{k/2}\}\bigr|^2\biggr]
\label{main-term}\\
&\quad+\E_\infty\biggl[\;\sum_{\substack{0\leq m\leq n\\ x\not\in\Lambda}}
P_0^\w(X_m=x,\lambda_k=m)\bigl|E_x^\w\{X_{n-m}-x-(n-m)v\}\bigr|^2\biggr]\nn\\
&\quad+2\E_\infty\biggl[\;\sum_{\substack{0\leq m\leq n\\ x\cdot\uhat\geq k}}
P_0^\w(X_m=x,\lambda_k=m)\bigl|E_x^\w\{X_{n-m}-x-(n-m)v,A_{k/2}^c\}\bigr|^2\biggr].\nn
\end{align}

By Cauchy-Schwarz, 
Hypothesis (M) and \eqref{exp-bound}, the third term is
$\Ord(n^2 e^{-{s} k/2})=\Ord(1)$. 
The second term is of order
\begin{align*}
n^2\max_i \E_\infty[P_0^\w(X_{\lambda_k}\cdot\uhat_i<1)]&\le
n^2\max_i\sum_{m\ge1}\E_\infty\Big[
\bigl(P_0^\w(X_m\cdot\uhat_i<1)P_0^\w(X_m\cdot\uhat\ge k)\bigr)^{1/2}
\,\Big]\\
&\le e^{{s}/2}n^2\sum_{m\ge1}(1-{s}\delta/4)^{m/2}e^{-\mu k/2}
e^{\mu M|\uhat|m/2}\\
&=\Ord(n^2 e^{-\mu k/2})=\Ord(1),
\end{align*}
for $\mu$ small enough.  It remains to bound the term
on line \eqref{main-term}.
To this end, by Cauchy-Schwarz, 
\eqref{bounded-step} and \eqref{exp-bound},
\[P_0^\w(X_m=x,\lambda_k=m)\le 
\{e^{-{s} x\cdot\uhat/2}e^{{s} M|\uhat|m/2}\wedge1\}\times
\{e^{\mu k/2}(1-\mu\delta/2)^{(m-1)/2}\wedge1\} 
\equiv p_{x,m,k}.\]
Notice that 
\[\sum_{x\in\Lambda}\{e^{-{s} x\cdot\uhat/2}e^{{s} M|\uhat|m/2}\wedge1\}
=\Ord(m^d)\]
and
\[\sum_{m\ge1}m^d\{e^{\mu k/2}(1-\mu\delta/2)^{(m-1)/2}\wedge1\}
=\Ord(k^{d+1}).\]
Substitute these back into line \eqref{main-term} to eliminate
the quenched probability coefficients. 
The quenched expectation  in \eqref{main-term} 
 is $\kS_{k/2}$-measurable.  Consequently 
variation distance bound \eqref{vard-exp} 
allows us to switch back to $\P$ and get this upper bound
for line \eqref{main-term}: 
\[2\sum_{\substack{x\in\Lambda,\,x\cdot\uhat\ge k\\0\le m\le n}}p_{x,m,k}
\E[|E_x^\w\{X_{n-m}-x-(n-m)v,A_{k/2}\}|^2]+\Ord(k^{d+1}n^2 e^{-ck/2}).\]
The error term is again $\Ord(1)$. 

Now insert $A_{k/2}^c$ back inside the quenched
expectation, incurring  another error term of order 
$\Ord(k^{d+1}n^2 e^{-{s} k/2})=\Ord(1)$. 
Using the shift-invariance of $\P$, along with
\eqref{cond1-modified}, and collecting all of the above error terms, 
we get 
\begin{align*}
&\E_\infty\left[\,\abs{E_0^\w(X_n-nv)}^2\,\right]\\
&=\sum_{\substack{x\in\Lambda,\,x\cdot\uhat\ge k\\0\le m\le n}}p_{x,m,k}
\E[\,|E_x^\w\{X_{n-m}-x-(n-m)v\}|^2]+\Ord(n^{2\rho})\\
&=
\Ord(k^{d+1}n^{2\bar\alpha}+n^{2\rho})
= \Ord(n^{\rho(d+1)+2\bar\alpha}). 
\end{align*}
Pick  $\rho>0$  small enough so that $2\alpha=
\rho(d+1)+2\bar\alpha<1$. 
The conclusion \eqref{cond} follows. 
\end{proof}

Once  we have verified the assumptions of Theorem \ref{RS} we have 
the CLT under  $\P_\infty$-almost every $\w$.  But we want the CLT
under  $\P$-almost every $\w$. 
Thus as the final point of this section we prove 
 the transfer of the central limit theorem from $\P_\infty$ to $\P$. 
  This  is where we use the ergodic theorem, Theorem \ref{erg-thm}.
Let $\wiener$ be the probability distribution
 of the Brownian motion with diffusion
matrix $\mathfrak D$. 

\begin{lemma} 
 Suppose the weak convergence $Q^\w_n\Rightarrow W$ 
holds for $\P_\infty$-almost every $\w$.  Then the same is true
 for $\P$-almost every $\w$.  
\label{clt-transfer-lm}
\end{lemma}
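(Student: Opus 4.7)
The plan is to reduce the quenched weak convergence to scalar a.s.\ convergence and then transfer from $\P_\infty$ to $\P$ by combining a Markov ``shift'' identity with the ergodic theorem of Theorem \ref{erg-thm}. By separability of $C_b(D_{\R^d}[0,T])$ it suffices to show, for each $T>0$ and each bounded Lipschitz $F$, that
\[
f_N(\w):=E_0^\w F(B_N)\longrightarrow W(F)\qquad\text{for $\P$-a.e.\ $\w$.}
\]
Bounded convergence applied to the $\P_\infty$-a.s.\ hypothesis gives $\E_\infty f_N\to W(F)$.

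Two auxiliary estimates drive the transfer. First, the Markov property at time $k$ yields
\[
E_0^\w\bigl[f_N(T_{X_k}\w)\bigr]=E_0^\w\bigl[F(B_N^{(k)})\bigr],\qquad B_N^{(k)}(t):=\frac{X_{k+[Nt]}-X_k-[Nt]v}{\sqrt N},
\]
and the moment bounds of Lemma \ref{exponential} together with the Lipschitz constant $L$ of $F$ deliver $\bigl|f_N(\w)-E_0^\w[f_N(T_{X_k}\w)]\bigr|\le CLk/\sqrt N$ uniformly in $\w$. Second, I will truncate to get a local function: setting $R_N:=3MN$ and
\[
\tilde f_N(\w):=E_0^\w\bigl[F(B_N)\ind\{\max_{j\le N}|X_j|\le R_N\}\bigr],
\]
the exponential step bound \eqref{bounded-step} shows $|f_N-\tilde f_N|\le Ce^{-cR_N}$ uniformly in $\w$ while $\tilde f_N$ is $\kS_{-R_N}$-measurable, hence eligible for Theorem \ref{erg-thm} with $a=R_N$. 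The approximation also gives $\E_\infty\tilde f_N\to W(F)$.

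Now, for each fixed $N$, Theorem \ref{erg-thm} applied to $\tilde f_N$ gives $K^{-1}\sum_{k=0}^{K-1}\tilde f_N(T_{X_k}\w)\to\E_\infty\tilde f_N$ as $K\to\infty$, $P_0$-almost surely. Intersecting over $N\in\N$ produces a $\P$-full-measure set $\Omega^\star$ on which this convergence holds simultaneously for every $N$ and $P_0^\w$-a.e.\ path, and bounded convergence in $P_0^\w$ promotes it to
\[
h_N(K;\w):=E_0^\w\Bigl[K^{-1}\sum_{k=0}^{K-1}\tilde f_N(T_{X_k}\w)\Bigr]\longrightarrow\E_\infty\tilde f_N\qquad(K\to\infty)
\]
for every $\w\in\Omega^\star$ and every $N$. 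Averaging the Markov-shift identity over $k\in\{0,\dots,K-1\}$ and inserting the truncation error produces the chain
\[
\bigl|f_N(\w)-\E_\infty\tilde f_N\bigr|\le \frac{CLK}{\sqrt N}+Ce^{-cR_N}+\bigl|h_N(K;\w)-\E_\infty\tilde f_N\bigr|,
\]
so a joint choice $K=K_N\to\infty$ with $K_N=o(\sqrt N)$, made so that the third term vanishes, delivers $f_N(\w)\to W(F)$ on $\Omega^\star$.

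The hard part will be this final double limit: Theorem \ref{erg-thm} supplies $K\to\infty$ convergence for each \emph{fixed} $N$, whereas the Markov shift error imposes the rigid constraint $K_N=o(\sqrt N)$, and a priori the ergodic average's rate in $K$ may depend on $N$. I expect to handle this by passing to the $L^1(P_0)$ form of Theorem \ref{erg-thm} and extracting $K_N$ by a diagonal/Borel--Cantelli argument, if necessary only along a subsequence of $N$; the full sequence then follows from the (uniform in $\w$) tightness of $\{Q_n^\w\}$ furnished by Hypothesis (M) and Lemma \ref{exponential}, which forces every subsequential limit to be $W$.
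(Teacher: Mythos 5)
Your approach is genuinely different from the paper's, and it has a gap precisely where you flag ``the hard part.'' The paper does not try to prove direct convergence of $E_0^\w[F(B_n)]$; it proves only the one-sided bound $\varlimsup_n E_0^\w[F(B_n)]\le c+\delta$ for every $\delta>0$. That weakening is what makes the argument work: setting $\overline h(\w)=\varlimsup_n E_0^\w[F(B_n)]$, the ergodic theorem (applied to a local approximation of $\one\{\overline h\le c+\delta/2\}$) is used only to conclude that the stopping time $\zeta=\inf\{n\ge 0:\overline h(T_{X_n}\w)\le c+\delta\}$ is $P_0$-a.s.\ finite. This is a purely qualitative consequence of Theorem \ref{erg-thm}; no rate of convergence in the ergodic theorem is needed. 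The diffusive scaling then washes out the first $\zeta$ steps, and the strong Markov property at the finite time $\zeta$ delivers the bound.

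Your Ces\`aro scheme requires more: you must pick $K_N\to\infty$ with $K_N=o(\sqrt N)$ (or $o(\sqrt N/\log N)$ under Hypothesis (M), since the max step is only logarithmically controlled, so the claimed uniform bound $CLk/\sqrt N$ actually picks up a $\log N$) such that $h_N(K_N;\w)\to\E_\infty\tilde f_N$. Theorem \ref{erg-thm} gives $K\to\infty$ convergence for each fixed $N$, but the function $\tilde f_N$ changes with $N$ and the ergodic rate in $K$ can deteriorate arbitrarily with $N$; nothing you have established forbids the ergodic average for $\tilde f_N$ from needing $K\gg\sqrt N$ to settle. The proposed repairs do not close this. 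A Borel--Cantelli extraction still ties the admissible $K_N$ to the $\tilde f_N$-ergodic rate, for which you have no bound (and the $L^1$ version of Theorem \ref{erg-thm} is likewise rateless). The tightness fallback is wrong as stated: uniform tightness of $\{Q_n^\w\}$ yields existence of subsequential limits along any subsequence of $n$, not that every subsequential limit is $W$; identifying those limits is exactly the content you need to prove, so the argument is circular. (A minor side point: your $\tilde f_N$ is $\kS_{-R_N|\uhat|}$-measurable rather than $\kS_{-R_N}$-measurable, since the cube truncation only controls $X_j\cdot\uhat$ up to the factor $|\uhat|$; this is harmless but worth noting.) To rescue the proposal you would essentially have to replace the Ces\`aro averaging by the paper's stopping-time device, which sidesteps the rate issue entirely.
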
 

\begin{proof} 
It suffices to show that for any bounded uniformly continuous
$F$ on $D_{\R^d}[0,\infty)$ and any $\delta>0$
\[
\varlimsup_{n\to\infty} E^{\w}_0[F(B_n)]\le \int F\,d\wiener+\delta \quad
\text{$\P$-a.s.} 
\] 
By considering also $-F$ 
this gives $E^{\w}_0[F(B_n)]\to \int F\,d\wiener$ {$P_0$-a.s.} 
for each such function.
A countable collection of them determines weak convergence. 

Fix such an $F$.  Let $c=\int F\,d\wiener$ and  
\[
\overline h(\w)=\lsup_{n\to\infty} E_0^\w[F(B_n)]. 
\]
 For $\ell>0$ 
define the events
\[
A_{-\ell}=\{\inf_{n\ge0} X_n\cdot\uhat\ge-\ell\}\]
and then 
\[
\overline h_\ell(\w)=\lsup_{n\to\infty} E_0^\w[F(B_n), A_{-\ell}]
\quad\text{and}\quad 
\Psi(\w)=\one\{\w\,:\, \bar{h}_\ell(\w)\le c+\tfrac12\delta\}.
\]
The assumed
quenched CLT under $\P_\infty$ gives  $\P_\infty\{\overline h=c\}=1$. 
By  \eqref{exp-bound},  and by its extension to $\P_\infty$ in
Theorem  \ref{Th_exist}(c), there are constants $0<C,s<\infty$ such
that  
\[
\lvert \overline h(\w) - \overline h_\ell(\w) \rvert \le Ce^{-s\ell}
\]
uniformly over all $\w$ that support both $\P$ and $\P_\infty$. 
Consequently  if $\delta>0$ is given, 
 $\E_\infty\Psi=1$ for large enough $\ell$.
Since $\Psi$ is $\kS_{-\ell}$-measurable 
 Theorem \ref{erg-thm} implies that 
\[n^{-1}\sum_{j=1}^n \Psi(T_{X_j}\w)\to 1 \quad \text{ $P_0$-a.s.}\]  
By increasing $\ell$ if necessary we can ensure 
that $\{\bar{h}_\ell\le c+\tfrac12\delta\}\subset\{\bar{h}\le c+\delta\}$
 and conclude that the stopping time
\[
\zeta=\inf\{n\ge 0: \bar{h}(T_{X_n}\w)\le c+\delta\}
\]
is $P_0$-a.s.\ finite. From the definitions we now have 
\[
\varlimsup_{n\to\infty} E^{T_{X_\zeta}\w}_0[F(B_n)]\le 
\int F\, dW+\delta \quad
\text{$P_0$-a.s.} 
\]
Then by bounded convergence 
\[\varlimsup_{n\to\infty}E_0^\w E_0^{T_{X_{\zeta}}\w}[F(B_n)]
\le\int F\, dW+\delta \quad \P\text{-a.s.}\]
Since $\zeta$ is a finite stopping time, the strong Markov property,
 the uniform continuity of $F$ and the exponential moment 
bound \eqref{bounded-step}  on $X$-increments  imply
\[\varlimsup_{n\to\infty}E_0^\w [F(B_n)]
\le\int F\, dW+\delta\quad \P\text{-a.s.}\]
This concludes the proof. 
\end{proof}

\section{Reduction to path intersections}
\label{pathintersections}
The preceding sections have reduced the proof of the
main result Theorem \ref{main}   to proving the estimate
\begin{align}
\E\left(\,\abs{E_0^\w(X_n)-E_0(X_n)}^2\,\right)=\Ord(n^{2\alpha})
\quad\text{for some $\alpha<1/2$.} 
\label{cond1a}
\end{align}
The next reduction takes us to 
 the expected number of intersections of the paths of two independent
walks $X$ and $\Xtil$ 
 in the same environment. 
The argument uses a decomposition into martingale differences
through an  ordering of lattice sites.  
This idea for bounding a variance is natural and has
been used  in RWRE earlier by Bolthausen and Sznitman \cite{dynstat}.

Let $P_{0,0}^\w$ be the quenched
law of the walks $(X,\Xtil)$ started at $(X_0,\Xtil_0)=(0,0)$  
 and $P_{0,0}=\int P_{0,0}^\w\,\P(d\w)$ the averaged
law with expectation operator $E_{0,0}$. The set of sites visited
by a walk is denoted by $X_{[0,n)}=\{X_k: 0\le k<n\}$ and 
 $\abs{A}$ is the number of elements in a discrete set $A$. 

\begin{proposition}
\label{intersections}
Let $\P$ be an i.i.d.\ product measure and satisfy Hypotheses {\rm(N)} and {\rm(M)}.
Assume that there exists an $\bar\alpha<1/2$ such that
\begin{align}
E_{0,0}(|X_{[0,n)}\cap\Xtil_{[0,n)}|)=\Ord(n^{2\bar\alpha}).
\label{cond2}
\end{align}
Then condition \eqref{cond1a} is satisfied. 
\end{proposition}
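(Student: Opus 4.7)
The plan is to bound $\Var_{\P}(E_0^\w(X_n))$ coordinate by coordinate via a martingale decomposition indexed by an enumeration of $\Z^d$, and to reduce each martingale increment to a single-site perturbation estimate whose square sums to the intersection count appearing on the right-hand side of \eqref{cond2}.

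Fix a unit vector $e\in\R^d$, set $f_n(\w)=E_0^\w(X_n)\cdot e$, enumerate $\Z^d=\{z_1,z_2,\dots\}$, and put $\cF_k=\sigma(\w_{z_1},\dots,\w_{z_k})$. Let $\w^{(k)}$ denote the environment obtained from $\w$ by replacing $\w_{z_k}$ with an independent copy drawn from the same marginal. Since $\w^{(k)}$ has the same $\cF_{k-1}$-conditional law as $\w$ and does not depend on $\w_{z_k}$, the martingale $M_k=\E[f_n\mid\cF_k]$ satisfies $M_{k-1}=\E[f_n(\w^{(k)})\mid\cF_k]$, and therefore by Jensen's inequality together with orthogonality of martingale increments,
\[
\Var_\P(f_n)=\sum_{k\ge1}\E[(M_k-M_{k-1})^2]\le\sum_{k\ge1}\E\bigl[(f_n(\w)-f_n(\w^{(k)}))^2\bigr].
\]

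The crux of the proof is the uniform single-site perturbation bound
\[
|E_0^\w(X_n)-E_0^{\w^{(k)}}(X_n)|\le C\,P_0^\w(T_{z_k}<n),\qquad T_{z_k}:=\inf\{m\ge0:X_m=z_k\},
\]
with $C$ depending only on the constants of Hypotheses (N) and (M). Because a path from $0$ reaching $z_k$ without visiting $z_k$ beforehand has identical probability under $\w$ and $\w^{(k)}$, the strong Markov property at $T_{z_k}$ yields
\[
E_0^\w(X_n)-E_0^{\w^{(k)}}(X_n)=E_0^\w\bigl[\one\{T_{z_k}<n\}\bigl(E_{z_k}^\w(X_{n-T_{z_k}})-E_{z_k}^{\w^{(k)}}(X_{n-T_{z_k}})\bigr)\bigr],
\]
and the claim reduces to the $j$-uniform estimate $|E_{z_k}^\w(X_j)-E_{z_k}^{\w^{(k)}}(X_j)|\le C$ for all $j\ge 0$. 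Starting from $z_k$, the two quenched means differ only through revisits to $z_k$ (the one-step drift $D(T_y\w)$ agrees with $D(T_y\w^{(k)})$ whenever $y\ne z_k$); each revisit contributes at most $2M$ by Hypothesis (M), and by \eqref{exp-bound} the expected number of revisits is uniformly bounded,
\[
G^\w(z_k,z_k)=\sum_{i\ge0}P_{z_k}^\w(X_i=z_k)\le 2/(s_1\delta).
\]
A resolvent expansion of $E_{z_k}^\w(X_j)-E_{z_k}^{\w^{(k)}}(X_j)$ over the visits to $z_k$, combined with this Green-function bound and with the fact that $\sum_y(\pi_{z_k,y}(\w)-\pi_{z_k,y}(\w^{(k)}))=0$ (so only the $y$-dependence of $E_y^\w(X_{\cdot})-y$ near $z_k$ matters, and only through bounded steps), converts the heuristic into a proof.

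Plugging the perturbation bound into the martingale sum and recognising the intersection count through two independent walks $X,\Xtil$ in a common environment gives
\[
\sum_{k\ge1}\E\bigl[P_0^\w(T_{z_k}<n)^2\bigr]=\E E_{0,0}^\w\!\Bigl[\sum_{k\ge1}\one\{z_k\in X_{[0,n)}\cap \Xtil_{[0,n)}\}\Bigr]=E_{0,0}\bigl(|X_{[0,n)}\cap \Xtil_{[0,n)}|\bigr)=\Ord(n^{2\bar\alpha})
\]
by assumption \eqref{cond2}. Summing $e$ over an orthonormal basis of $\R^d$ yields \eqref{cond1a} with $\alpha=\bar\alpha<1/2$. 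The main obstacle is precisely the $j$-independent single-site perturbation bound: the crude bound from Hypothesis (M) alone yields only $O(j)$, which would inflate the right-hand side by a factor of $n^2$ and destroy the argument. The improvement rests on non-nestling (N), which forces revisits to $z_k$ to be exponentially rare and thus produces the uniform Green-function bound that drives the argument.
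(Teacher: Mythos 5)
Your martingale decomposition and the link to the intersection count are the right framework, and match the paper's. The gap is in the central claim, the $j$-uniform single-site perturbation bound
\[
\bigl|E_{z_k}^\w(X_j)-E_{z_k}^{\w^{(k)}}(X_j)\bigr|\le C,
\]
which the resolvent expansion you sketch does not deliver. Writing $h_j(x)=E_x^\w(X_j-x)-E_x^{\w^{(k)}}(X_j-x)$, the expansion gives
\[
h_j(z_k)=\sum_{m=0}^{j-1}P_{z_k}^\w(X_m=z_k)\,\phi_{j-m},
\qquad
\phi_i=\sum_y\bigl(\pi_{z_k,y}(\w)-\pi_{z_k,y}(\w^{(k)})\bigr)\bigl[y-z_k+E_y^{\w^{(k)}}(X_{i-1}-y)\bigr].
\]
Your Green function bound controls $\sum_m P_{z_k}^\w(X_m=z_k)$, but it does nothing for $\phi_i$. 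The first piece $\sum_y(\pi_{z_k,y}(\w)-\pi_{z_k,y}(\w^{(k)}))(y-z_k)$ is $O(M)$ by Hypothesis (M). The obstruction is the second piece: the coefficients sum to zero, so after centering it equals $\sum_y(\pi_{z_k,y}(\w)-\pi_{z_k,y}(\w^{(k)}))\bigl[E_y^{\w^{(k)}}(X_{i-1}-y)-E_{y_0}^{\w^{(k)}}(X_{i-1}-y_0)\bigr]$, and there is no a priori reason for the $y$-dependence of $E_y^{\w^{(k)}}(X_{i-1}-y)$ over a bounded window to be $O(1)$. That quantity \emph{is} the quenched corrector fluctuation we are trying to control, so assuming it bounded makes the argument circular. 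Your phrase ``only through bounded steps'' conflates a bounded set of values of $y$ with a bounded oscillation of $E_y^\w(\cdot)$ over that set; the former does not imply the latter.

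The paper's proof does not make this pointwise claim. After coupling the walks until the first visit to $z$, it splits the post-visit expectation at a regeneration time $\tau_1^{(\ell)}$ (where $\ell$ records the backtrack depth $M_{H_z}-z\cdot\uhat$), and only then integrates out the environment in the half-space $\{x\cdot\uhat\ge z\cdot\uhat+\ell\}$: the regenerated piece becomes the deterministic $E_0(X_{n-m}\mid\beta=\infty)$ and cancels exactly between $\w$ and $\w^{(k)}$, leaving only the pre-regeneration contribution \eqref{line-23}, bounded via \eqref{X-tau-bound} by $C_p\ell^p$; the overshoot $\ell$ is then controlled by \eqref{Hz-bound}. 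In other words, the cancellation that you try to achieve with the resolvent expansion is achieved there only after averaging over the distant environment, and it incurs an $n^{p\varepsilon}$ loss (reflected in the conclusion $2\alpha=2p\varepsilon+2\bar\alpha$), not the sharp $\alpha=\bar\alpha$ your bound would give. That this loss appears in the paper is itself evidence that the stronger pointwise bound is not available.
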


\def\wwtil{{\tilde{\w}}}
\begin{proof}
For $L\geq0$, define
${\mathcal B}(L)=\{x\in\Z^d:|x|\leq L\}$. 
Fix $n\geq1$, $c>|\uhat|$, and
let $(x_j)_{j\geq1}$ be some fixed ordering of ${\mathcal B}(cMn)$ satisfying
\[\forall i\geq j:x_i\cdot \uhat\geq x_j\cdot \uhat.\]
For $B\subset\Z^d$ let $\kS_B=\sigma\{\w_x: x\in B\}$. 
Let $A_j=\{x_1,\dotsc,x_{j}\}$, $\zeta_0= E_0(X_n)$,  and for $j\geq 1$
\[
\zeta_j=\E(E^\w_0(X_n)|\kS_{A_j}).
\]
 $(\zeta_j-\zeta_{j-1})_{j\geq1}$ is a sequence of $L^2(\P)$-martingale
differences and we have
\begin{align}
&\E[\,|E^\w_0( X_n) -E_0(X_n)|\,^2]\\
&\qquad\le
2\E\bigl[\,|E_0(X_n)-\E\{E^\w_0(X_n)|\kS_{{\mathcal B}(cMn)}\}|^2\,\bigr]\nn\\
&\qquad\qquad+
2\E\bigl[\,\bigl|E^\w_0(X_n,\max_{i\le n} |X_i|>cMn)\nn\\
&\qquad\qquad\qquad\qquad -
\E\{E^\w_0(X_n,\max_{i\le n} |X_i|>cMn)\,\vert\,
\kS_{{\mathcal B}(cMn)}\}\bigr|^2\,\bigr]\nn\\
&\qquad\le2\sum_{j=1}^{|{\mathcal B}(cMn)|}\E(\,|\zeta_j-\zeta_{j-1}|^2\,)+
\Ord(n^3 e^{-{s} M(c-|\uhat|)n}).
\label{mgale-decomp}
\end{align}
In the last inequality we have used \eqref{bounded-step}.  The 
error is $\Ord(1)$. 
For $z\in\Z^d$ define half-spaces 
\[
{\mathcal H}(z)=\{x\in\Z^d: x\cdot\uhat > z\cdot\uhat\}.
\]
Since $A_{j-1}\subset A_j\subset {\mathcal H}({x_j})^c$, 
\begin{align}
&\E(|\zeta_j-\zeta_{j-1}|^2)  \nn\\
&\quad =
\int \P(d\w_{A_j})
\Bigl\lvert \iint \P(d\w_{A_j^c})\P(d\tilde{\w}_{x_j})
 \bigl( E^\w_0( X_n)-E^{\pp{\w,\tilde{\w}_{x_j}}}_0( X_n)\bigr)
\Bigr\rvert^2\nn\\
&\quad \leq
\iint \P(d\w_{{\mathcal H}(x_j)^c})\P(d\tilde{\w}_{x_j})
\Bigl\lvert \int \P(d\w_{{\mathcal H}(x_j)}) \bigl( E^\w_0( X_n)-E^{\pp{\w,\tilde{\w}_{x_j}}}_0( X_n)\bigr)
\Bigr\rvert^2.
\label{line-2}
 \end{align}
Above $\pp{\w,\tilde{\w}_{x_j}}$ denotes an environment obtained from $\w$
by replacing  $\w_{x_j}$ with  $\tilde{\w}_{x_j}$. 

We fix a point $z=x_j$ to develop a bound for the expression above,
and then  return to collect the estimates. 
Abbreviate $\wwtil=\pp{\w,\tilde{\w}_{x_j}}$. 
Consider two walks $X_n$ and $\Xtil_n$ starting at $0$. $X_n$ obeys
environment $\w$, while $\Xtil_n$ obeys $\wwtil$. 
We can couple the two walks so that they stay together until 
the first time they visit $z$. Until a visit to $z$ happens, the 
walks are identical. So we write
\begin{align}
&\int \P(d\w_{{\mathcal H}(z)})\bigl( E^\w_0( X_n)-E^\wwtil_0( X_n)\bigr)
\label{line-7}\\
&=
\int \P(d\w_{{\mathcal H}(z)}) \sum_{m=0}^{n-1} P^\w_0( H_z=m)
\bigl( E^\w_z( X_{n-m}-z)-E^\wwtil_z( X_{n-m}-z)  \bigr)\nn\\
\begin{split}
&=\int \P(d\w_{{\mathcal H}(z)}) \sum_{m=0}^{n-1} \sum_{\ell>0}
 P^\w_0( H_z=m, \ell-1\leq\max_{0\leq j\leq m}X_j\cdot\uhat-z\cdot\uhat<\ell  )
\\
&\qquad\qquad \times 
\bigl( E^\w_z( X_{n-m}-z)-E^\wwtil_z( X_{n-m}-z)  \bigr).
\end{split} \label{line-19}
\end{align} 
 Decompose 
${\mathcal H}(z)={\mathcal H}_\ell(z)\cup {\mathcal H}_\ell'(z)$ where
\[
{\mathcal H}_\ell(z)=\{x\in\Z^d: z\cdot\uhat < x\cdot\uhat< z\cdot\uhat+\ell\}
\; \text{and}\; 
{\mathcal H}_\ell'(z)=\{x\in\Z^d:  x\cdot\uhat \geq z\cdot\uhat+\ell\}.
\]
Take a single $(\ell,m)$ term from the sum in
\eqref{line-19} and only
 the expectation $E^\w_z( X_{n-m}-z)$. 
\begin{align}
&\int \P(d\w_{{\mathcal H}(z)}) 
P^\w_0( H_z=m, \ell-1\leq\max_{0\leq j\leq m}X_j\cdot\uhat-z\cdot\uhat<\ell)\nn\\
&\qquad\qquad \times 
E^\w_z( X_{n-m}-z)\nn\\
\begin{split}
&=\int \P(d\w_{{\mathcal H}(z)}) 
P^\w_0( H_z=m, \ell-1\leq\max_{0\leq j\leq m}X_j\cdot\uhat-z\cdot\uhat<\ell)\\
&\qquad\qquad \times 
E^\w_z( X_{\tau^{(\ell)}_1+n-m}-X_{\tau^{(\ell)}_1})
\end{split}  
\label{line-22}  \\
\begin{split}   \quad 
&+\int \P(d\w_{{\mathcal H}(z)}) 
P^\w_0( H_z=m, \ell-1\leq\max_{0\leq j\leq m}X_j\cdot\uhat-z\cdot\uhat<\ell)
\\
&\qquad\qquad \times 
E^\w_z( X_{n-m}-X_{\tau^{(\ell)}_1+n-m}+X_{\tau^{(\ell)}_1}-z)\end{split}
\label{line-23}
\end{align}
The parameter $\ell$ in the 
 regeneration time $\tau^{(\ell)}_1$ of  the walk started at $z$  
ensures that the subsequent walk $X_{\tau^{(\ell)}_1+\,\cdot}$ stays in
${\mathcal H}_\ell'(z)$.  Below we make use of this to get independence 
from the environments
in ${\mathcal H}_\ell'(z)^c$.  
By \eqref{X-tau-bound} 
 the quenched
 expectation in \eqref{line-23} can be bounded by $C_p\ell^p$, for any $p>1$.

Integral \eqref{line-22} is developed further as follows. 
\begin{align}
&\int \P(d\w_{{\mathcal H}(z)}) 
 P^\w_0( H_z=m, \ell-1\leq\max_{0\leq j\leq m}X_j\cdot\uhat-z\cdot\uhat<\ell)\nn\\
&\qquad\qquad \times 
 E^\w_z( X_{\tau^{(\ell)}_1+n-m}-X_{\tau^{(\ell)}_1})\nn\\
&=\int \P(d\w_{{\mathcal H}_\ell(z)}) 
 P^\w_0( H_z=m, \ell-1\leq\max_{0\leq j\leq m}X_j\cdot\uhat-z\cdot\uhat<\ell)\nn\\
&\qquad\qquad \times \int \P(d\w_{{\mathcal H}_\ell'(z)}) 
 E^\w_z( X_{\tau^{(\ell)}_1+n-m}-X_{\tau^{(\ell)}_1})\nn\\
&=\int \P(d\w_{{\mathcal H}_\ell(z)}) 
 P^\w_0( H_z=m, \ell-1\leq\max_{0\leq j\leq m}X_j\cdot\uhat-z\cdot\uhat<\ell)\nn\\
&\qquad\qquad \times 
 E_z( X_{\tau^{(\ell)}_1+n-m}-X_{\tau^{(\ell)}_1}\vert \kS_{{\mathcal H}_\ell'(z)^c})\nn\\
\begin{split}
&=\int \P(d\w_{{\mathcal H}_\ell(z)}) 
 P^\w_0( H_z=m, \ell-1\leq\max_{0\leq j\leq m}X_j\cdot\uhat-z\cdot\uhat<\ell)
\\
&\qquad\qquad \times 
 E_0( X_{n-m}\vert \beta=\infty).\end{split}  \label{line-31}
\end{align}
The last equality above comes from the regeneration structure, see
Proposition 1.3 in Sznitman-Zerner \cite{szlln}.  The $\sigma$-algebra
$\kS_{{\mathcal H}_\ell'(z)^c}$ is contained in the  $\sigma$-algebra
$\cG_1$ defined by (1.22) of \cite{szlln} for the walk starting at $z$. 

The last quantity 
  \eqref{line-31}  above
reads the environment only until the first visit
to $z$, hence does not see the distinction between $\w$ and $\wwtil$.
Hence when the integral
 \eqref{line-19} is developed separately for 
$\w$ and $\wwtil$ into the sum of integrals \eqref{line-22}
and \eqref{line-23},
integrals \eqref{line-22} for $\w$ and $\wwtil$ cancel each other. 
We are left only with two instances
of integral \eqref{line-23}, one for both  $\w$ and $\wwtil$.  
The last quenched expectation in \eqref{line-23} 
 we bound by $C_p\ell^p$ as was mentioned above.

Going back to \eqref{line-7}, we get this bound:
\begin{align*}
&\Bigl\lvert 
\int \P(d\w_{{\mathcal H}(z)})\bigl( E^\w_0( X_n)-E^\wwtil_0( X_n)\bigr)
\Bigr\rvert\nn\\
&\leq C_p
\int \P(d\w_{{\mathcal H}(z)}) \sum_{\ell>0} \ell^p
 P^\w_0( H_z<n, \ell-1\leq\max_{0\leq j\leq H_z}X_j\cdot\uhat-z\cdot\uhat<\ell)
\nn\\
&\le C_p \int \P(d\w_{{\mathcal H}(z)}) 
E_0^\w[(M_{H_z}-z\cdot\uhat)^p\one\{H_z<n\}]\\
&\le C_p n^{p\e} \int \P(d\w_{{\mathcal H}(z)}) 
P_0^\w(H_z<n)+C_p s^{-p}e^{-{s} n^\e/2}. 
\end{align*}
 For the last inequality we used \eqref{Hz-bound} with $\ell=n^\e$ and 
some small  $\e, s>0$. 
Square, take $z=x_j$, integrate as in \eqref{line-2}, and use Jensen's 
inequality to bring the square inside the integral to get 
\begin{align*}
\E(\,|\zeta_j-\zeta_{j-1}|^2\,)\le 2C_p n^{2p\e}\E[\,|P_0^\w(H_{x_j}<n)|^2\,]+
2C_p s^{-2p}e^{-{s} n^\e}.
\end{align*}
Substitute these bounds into line \eqref{mgale-decomp} and note
that the error there is $\Ord(1)$. 
\begin{align*}
&\E[\,|E_0^\w(X_n)-E_0(X_n)|^2\,]\\
&\qquad\qquad\le C_p n^{2p\e}\sum_z\E[\,|P_0^\w(H_z<n)|^2\,]+
\Ord(n^d s^{-2p}e^{-{s} n^\e})+\Ord(1)\\
&\qquad\qquad= C_p n^{2p\e}\sum_z P_{0,0}(z\in X_{[0,n)}\cap\tilde X_{[0,n)})+
\Ord(1)\\
&\qquad\qquad= C_p n^{2p\e}E_{0,0}[\,|X_{[0,n)}\cap\tilde X_{[0,n)}|\,]+
\Ord(1).
\end{align*}
Utilize assumption \eqref{cond2} and take    $\e>0$ small enough 
so that  $2\alpha = 2p\e + 2\bar\alpha<1$. \eqref{cond1a}
has been verified.
\end{proof}

\section{Bound on intersections}
\label{intersectbound}
The remaining piece of the proof of Theorem \ref{main} 
is this estimate: 
\begin{align}
E_{0,0}(\,\lvert {X_{[0,n)}\cap\Xtil_{[0,n)}}\rvert\,)=\Ord(n^{2\alpha})
\quad\text{for some $\alpha<1/2$.} 
\label{cond3}
\end{align}
$X$ and $\Xtil$ are two independent walks in a common 
environment with quenched distribution
$P^\w_{x,y}[X_{0,\infty}\in A, \Xtil_{0,\infty}\in B]
= P^\w_{x}(A)P^\w_{y}(B)$ and averaged distribution 
$E_{x,y}(\cdot)=\E P^\w_{x,y}(\cdot)$.

To deduce the  sublinear bound 
 we introduce regeneration times at which both
 walks regenerate on the same level in space (but not 
necessarily at the same time). 
Intersections happen only within  the  
regeneration slabs, and the
expected number of intersections
decays exponentially in the distance between the points of
entry of the  walks in the slab.
From  regeneration to regeneration the difference of
the two walks operates
like a Markov chain. This Markov chain can be approximated 
by a symmetric random walk.  Via this preliminary work the
required estimate boils down to deriving a Green function 
bound for a Markov chain that can be suitably approximated
by a symmetric random walk.  This part is relegated to an 
appendix.  Except for the appendix, we complete the proof
of the functional central limit theorem 
in this section.    

To aid  our discussion of a pair of walks $(X,\Xtil)$ we introduce 
some new notation.  
We write  $\theta^{m,n}$ for the shift on pairs of paths:
$\theta^{m,n}(x_{0,\infty},y_{0,\infty})=(\theta^mx_{0,\infty},
\theta^ny_{0,\infty})$.
If we write separate expectations for $X$ and $\Xtil$
under $P^\w_{x,y}$, these are denoted by $E^\w_x$ and $\Etil^\w_y$. 

By a {\sl joint stopping time} we mean pair $(\alpha, \altil)$ 
 that  satisfies $\{\alpha=m,\altil=n\}\in\sigma\{X_{0,m},\Xtil_{0,n}\}$.
Under the distribution $P^\w_{x,y}$ the walks $X$ and $\Xtil$ are 
independent. Consequently if $\alpha\vee\altil<\infty$
$P^\w_{x,y}$-almost surely then  
for any events $A$ and $B$,
\begin{align*}
&P^\w_{x,y}[ (X_{0,\alpha},\Xtil_{0,\altil})\in A,\, 
 (X_{\alpha, \infty},\Xtil_{\altil,\infty})\in B]\\
&\qquad = E^\w_{x,y} \bigl[ \one\{(X_{0,\alpha},\Xtil_{0,\altil})\in A\}
P^\w_{X_{\alpha},\Xtil_{\altil}}\{ 
 (X_{0,\infty},\Xtil_{0,\infty})\in B\} \bigr].
\end{align*}
This type of joint restarting will be used without comment in the sequel. 

For this section it will be convenient to have level stopping times
 and running maxima that are not defined 
 relative to the initial level. 
\[
\gamma_\ell=\inf\{n\ge 0: X_n\cdot\uhat\ge \ell\}
\quad\text{and}\quad 
\gamma^+_\ell=\inf\{n\ge 0: X_n\cdot\uhat> \ell\}.
\]
Since  $\uhat\in\Z^d$, $\gamma^+_\ell$ is simply an abbreviation
for $\gamma_{\ell+1}$.
Let  
$M_n=\sup\{X_i\cdot\uhat:i\le n\}$ 
 be the running maximum. 
 ${\Mtil}_n$,   $\tilde\gamma_\ell$ and   $\tilde\gamma^+_\ell$  
are the corresponding 
quantities for the $\Xtil$ walk.  The first backtracking time
for the $\Xtil$ walk is  $\tilde\beta=\inf\{n\ge 1: \Xtil_n\cdot\uhat
<\Xtil_0\cdot\uhat\}$. 

Define 
\[L=\inf\{\ell>(X_0\cdot\uhat)\wedge(\Xtil_0\cdot\uhat):
X_{\gamma_\ell}\cdot\uhat=\Xtil_{\tilde\gamma_\ell}\cdot\uhat=\ell\}\]
as the first fresh
common level after at least one walk has exceeded its
starting level.
Set $L=\infty$ if there is no such common level. 
When the walks are on a common level, their difference will
lie in the hyperplane
\[
\mathbb{V}_d=\{z\in\Z^d: z\cdot\uhat=0\}.
\]
We start with exponential tail bounds on the time to reach 
the common level.   

\begin{lemma}
There  exist  constants  $0<a_1, a_2,C<\infty$ 
such that, for all $x,y\in\Z^d$,  $m\ge0$ and $\P$-a.e.\ $\w$,
\begin{align}
P_{x,y}^\w(\gamma_L\vee\tilde\gamma_L\ge m)
\le  
Ce^{a_1\lvert y\cdot\uhat-x\cdot\uhat\rvert-a_2m}. 
 \label{gamma_L-bound}
\end{align}
\label{gamlm2}\end{lemma}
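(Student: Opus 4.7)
The plan is to prove the tail bound by constructing an explicit high-probability single-trial event on which both walks reach a common fresh level quickly, and then iterating via the strong Markov property. Assume without loss of generality that $y\cdot\uhat\ge x\cdot\uhat$ and set $h=y\cdot\uhat-x\cdot\uhat$; the opposite case is symmetric under the swap $(X,\Xtil)\leftrightarrow(\Xtil,X)$.

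For a single trial, Hypothesis (R) gives $\sum_{z:z\cdot\uhat=1}\pi_{0,z}(\w)\ge\kappa$ uniformly in $\w$, so each walk takes a step of $\uhat$-component exactly $1$ with quenched probability at least $\kappa$ at every step. Since $X$ and $\Xtil$ are independent given $\w$ under $P^\w_{x,y}$, the event
\[E=\bigl\{(X_i-X_{i-1})\cdot\uhat=1\text{ for }1\le i\le h+1\bigr\}\cap\bigl\{(\Xtil_1-\Xtil_0)\cdot\uhat=1\bigr\}\]
has $P^\w_{x,y}(E)\ge\kappa^{h+2}$. On $E$ one checks $X_{h+1}\cdot\uhat=y\cdot\uhat+1=\Xtil_1\cdot\uhat$, with neither walk overshooting this target, so the fresh common level $y\cdot\uhat+1>x\cdot\uhat\wedge y\cdot\uhat$ witnesses $L\le y\cdot\uhat+1$ and $\gamma_L\vee\tilde\gamma_L\le h+1$. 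This handles the regime $m\le h+1$ trivially by choosing $C$ and $a_1$ large enough that $Ce^{a_1h-a_2m}\ge 1$ throughout that range.

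For $m>h+1$ one iterates. If the first trial fails (probability $\le 1-\kappa^{h+2}$), apply the strong Markov property at time $h+1$ to restart the argument with the new configuration $(X_{h+1},\Xtil_{h+1})$, which has some new level gap $h_1$. By Hypothesis (M), bound \eqref{bounded-step} controls $E^\w[e^{s|X_n-x|}]\le e^{sMn}$ and thus gives an exponential moment for $h_1$ in terms of the elapsed time. Chaining successive trials via an exponential Chebyshev / tilting estimate on the number of trials needed should yield $P^\w_{x,y}(\gamma_L\vee\tilde\gamma_L\ge m)\le Ce^{a_1h-a_2m}$.

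The main technical obstacle is the feedback loop between the growing gap $h_j$ across failed trials and the shrinking per-trial success probability $\kappa^{h_j+2}$: a crude application of the Markov iteration can give $h_j\le C_1^j h$, which does not close. I would address this by a two-regime decomposition: first use \eqref{exp-bound} to force both walks above level $y\cdot\uhat+cm$ within time $m$, with failure probability bounded by $e^{sh-c'm}$; then invoke Hypothesis (R) together with the conditional independence of $X$ and $\Xtil$ given $\w$ to show that inside the long vertical strip so traversed at least one common fresh level occurs with overwhelming conditional probability. Combining these two ingredients delivers the desired exponential tail.
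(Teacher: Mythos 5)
Your single-trial event $E$ is correct as far as it goes, but as you yourself note, the trial-success probability $\kappa^{h+2}$ decays exponentially in the current $\uhat$-gap $h$, and that gap can grow under iteration.  This is not a technicality you can ``address'' with a two-regime split; it is the entire difficulty of the lemma.  The paper resolves it with a specific idea that is missing from your proposal: a per-trial success probability that is bounded below \emph{uniformly}, independent of the gap.  Concretely, the paper's iteration lets one walk (say $X$) run up to the other's current level and land on some level $\ell(i)$, then lets $\Xtil$ run up to $\ell(i)$ landing on $\elltil(i)$, and then asks $X$ to reach $\elltil(i)$ exactly.  The overshoot estimate of Lemma~\ref{overshootlm} (itself a consequence of \eqref{greenbd-w1} and Hypothesis (M)) guarantees that $\elltil(i)-\ell(i)\le b$ except on an event of probability $\le Ce^{-sb}$, so the distance $X$ must ladder up exactly is a \emph{fixed constant} $b$, not the original gap.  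Iterating \eqref{level-ell-1} then gives a uniform per-trial success probability $\kappa_2=\kappa^b(1-Ce^{-sb})>0$, whence $I$, the trial at which the common level is reached, has a geometric tail, and the level $\elltil(I)$ reached by the $I$-th trial has exponential tails via the overshoot sum.  Combined with \eqref{exp-bound} this produces the bound $Ce^{a_1|y\cdot\uhat-x\cdot\uhat|-a_2m}$.

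Your proposed fix---force both walks above level $y\cdot\uhat+cm$ within time $m$ and then ``invoke Hypothesis (R) together with conditional independence to show that a common fresh level occurs with overwhelming conditional probability''---asserts precisely the step that needs proof and supplies none.  Showing that the two (quenched) renewal-like sets of exactly-hit levels intersect in a strip of height $cm$ requires controlling both the density of exact hits and the overshoots between them, and once you try to write that down you will find yourself reconstructing the paper's $\psi(z,w)$ estimate and Lemma~\ref{overshootlm}.  You should also handle the case $x\cdot\uhat\ge y\cdot\uhat$ separately (the paper lets $\Xtil$ go strictly above $x\cdot\uhat$ first and then applies the $x\cdot\uhat<y\cdot\uhat$ case together with an overshoot bound); your symmetry remark is fine for the stopping time $\gatil_L$ versus $\gamma_L$ but does not dispose of this case.
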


For the proof we need a bound on the overshoot. 

\begin{lemma}  There exist constants $0<C,s<\infty$ 
such that, for any level $k$, any $b\ge 1$,   any $x\in\Z^d$
such that $x\cdot\uhat\le k$, and $\P$-a.e.~$\w$,
\be 
P^\w_x[ X_{\gamma_k}\cdot\uhat\ge k+ b]\le  Ce^{-sb}.
\label{overshoot}
\ee
\label{overshootlm}
\end{lemma}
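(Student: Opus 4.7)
The plan is to decompose the overshoot event by conditioning on the walk's position $z=X_{\gamma_k-1}$ at the very step that crosses level $k$. By the quenched Markov property,
\[
P_x^\w\{X_{\gamma_k}\cdot\uhat\ge k+b\}
=\sum_{z:\,z\cdot\uhat<k}g_k^\w(x,z)\sum_{w:\,w\cdot\uhat\ge k+b}\pi_{z,w}(\w),
\]
where $g_k^\w(x,z)=E_x^\w\bigl[\sum_{n\ge 0}\one\{X_n=z,\,\gamma_k>n\}\bigr]$ is the Green function of $X$ killed at $\gamma_k$. Setting $j=k-z\cdot\uhat\ge 1$, I would bound the inner step-tail and the level-sum of $g_k^\w$ separately, and combine them through a geometric sum over $j\ge 1$.

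For the step-tail, each jump $w-z$ in the inner sum satisfies $(w-z)\cdot\uhat\ge b+j$, so Chebyshev together with Hypothesis~(M) give
\[
\sum_{w:\,w\cdot\uhat\ge k+b}\pi_{z,w}(\w)
\le e^{-s_0(b+j)/|\uhat|}\sum_w e^{s_0|w-z|}\pi_{z,w}(\w)
\le e^{s_0 M}e^{-s_0(b+j)/|\uhat|}.
\]
For the level-sum I would bound $\sum_{z:\,z\cdot\uhat=k-j}g_k^\w(x,z)$ by the expected total number of visits by $X$ to level $k-j$. Letting $\sigma_j=\inf\{n:\, X_n\cdot\uhat=k-j\}$, the quantity vanishes when $\sigma_j=\infty$, so the strong Markov property reduces it to $\sup_{z:\,z\cdot\uhat=k-j}E_z^\w\bigl[\sum_{n\ge 0}\one\{X_n\cdot\uhat=k-j\}\bigr]$. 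For any such $z$, applying \eqref{exp-bound} and Markov's inequality to $e^{-s(X_n-z)\cdot\uhat}$ gives
\[
E_z^\w\Big[\sum_{n\ge 0}\one\{X_n\cdot\uhat\le z\cdot\uhat\}\Big]
\le 1+\sum_{n\ge 1}(1-s\delta/2)^n\le C,
\]
uniformly in $z$, $j$, $k$, and $\w$.

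Assembling the two estimates and summing over $j\ge 1$,
\[
P_x^\w\{X_{\gamma_k}\cdot\uhat\ge k+b\}
\le C\,e^{-s_0 b/|\uhat|}\sum_{j\ge 1}e^{-s_0 j/|\uhat|}\le C\,e^{-sb},
\]
as desired. The argument is of the same flavour as the estimates in Lemma~\ref{exponential}, so I do not anticipate a substantive obstacle. The only delicate point is the uniform-in-level bound on the expected number of visits; this rests squarely on the exponential-drift inequality \eqref{exp-bound}, which is precisely what crushes the waste coming from the fact that the killed walk could in principle spend a long time on the low levels $z\cdot\uhat\ll k$.
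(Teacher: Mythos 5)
Your proposal is correct and follows essentially the same route as the paper's proof: decompose over the position $z$ just before the crossing, bound the one-step tail $\sum_{w\cdot\uhat\ge k+b}\pi_{z,w}$ by $Ce^{-s(b+j)}$ via Hypothesis (M), control the Green function of each level $k-j$ by a constant using \eqref{exp-bound}, and sum the geometric series in $j$. No substantive difference.
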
 

\begin{proof} 
From \eqref{exp-bound} 
 it follows that for a constant $C$, for any level $\ell$,
any $x\in\Z^d$, and $\P$-a.e.~$\w$,
\be
E^\w_x[\text{number of visits to level $\ell$}]
=\sum_{n=0}^\infty P^\w_x[X_n\cdot\uhat=\ell]\le C.
\label{greenbd-w1}
\ee
(This is certainly clear if $x\cdot\uhat=\ell$. Otherwise wait
until the process first lands on level $\ell$, if ever.) 

From this and the exponential moment hypothesis
 we deduce the required 
 bound on the
overshoots:   for any $k$,
any $x\in\Z^d$ such that $x\cdot\uhat\le k$, and $\P$-a.e.~$\w$, 
\begin{align*}
&P^\w_x[ X_{\gamma_k}\cdot\uhat\ge k+ b]
= \sum_{n=0}^\infty\sum_{z\cdot\uhat <k}
 P^\w_x[ \gamma_k>n,\, X_n=z,\, X_{n+1}\cdot\uhat \ge k+ b]\\
&=\sum_{\ell>0} \sum_{z\cdot\uhat =k-\ell} 
\sum_{n=0}^\infty P^\w_x[\gamma_k>n,\, X_n=z] 
P^\w_z[X_{1}\cdot\uhat \ge k+ b]\\
&\le \sum_{\ell>0} \sum_{z\cdot\uhat =k-\ell} 
\sum_{n=0}^\infty P^\w_x[X_n=z]  Ce^{-s(\ell+b)} \\
&\le Ce^{-sb}  \sum_{\ell>0} e^{-s\ell} \le  Ce^{-sb}.
\qedhere \end{align*}
\end{proof} 

\begin{proof}[Proof of Lemma \ref{gamlm2}]
Consider first $\gamma_L$, and 
let us restrict ourselves to the case where the initial points $x,y$ satisfy
$x\cdot\uhat<y\cdot\uhat$.  

Perform an iterative construction of stopping times $\eta_i$, $\etatil_i$
and levels $\ell(i)$, $\elltil(i)$.  
Let $\eta_0=\etatil_0=0$, $x_0=x$ and $y_0=y$.
$\ell(0)$ and $\elltil(0)$ need not be defined.  Suppose that 
the construction has been done to stage $i-1$ with
$x_{i-1}=X_{\eta_{i-1}}$, $y_{i-1}=\Xtil_{\etatil_{i-1}}$, and 
$x_{i-1}\cdot\uhat<y_{i-1}\cdot\uhat$.  Then set 
\[
\ell(i)= X_{\gamma(y_{i-1}\cdot\uhat)}\cdot\uhat,\,
\elltil(i)= \Xtil_{\gatil(\ell(i))}\cdot\uhat,\,
\eta_i= \gamma(\elltil(i))\,\text{ and }\, 
\etatil_i= \gatil(X_{\eta_i}\cdot\uhat+1).
\]
In words, starting at $(x_{i-1},y_{i-1})$ with $y_{i-1}$ 
above $x_{i-1}$, let $X$ reach the level of $y_{i-1}$ and 
let $\ell(i)$ be the level $X$ lands on; let $\Xtil$ reach the level
$\ell(i)$  and 
let $\elltil(i)$ be the level $\Xtil$ lands on. Now let $X$ try to 
establish a new common level at  $\elltil(i)$ with $\Xtil$:
in other words, follow $X$ until the  time $\eta_i$ it reaches  level  $\elltil(i)$
or above, and stop it there.   Finally, reset the 
situation by letting $\Xtil$ reach  a level strictly above 
the level of $X_{\eta_i}$, and stop it there at time $\etatil_i$. 
The starting locations for the next step are
$x_i=X_{\eta_{i}}$, $y_i=\Xtil_{\etatil_{i}}$ that satisfy 
$x_i\cdot\uhat<y_i\cdot\uhat$.

We show that 
within each  step of the iteration there is a uniform lower bound on the 
probability that a fresh common level was found.  
For this purpose we utilize assumption \eqref{level-ell} in the
 weaker  form
\be \P\{\w: \,  P_0^\w(  X_{\gamma_1}\cdot\uhat=1  )\ge\kappa \,\} =1. 
\label{level-ell-1}\ee
Pick $b$ large enough so that the bound in \eqref{overshoot}
is $<$ $1$. 
For $z,w\in\Z^d$ such that $z\cdot\uhat\ge w\cdot\uhat$ define a 
function 
\begin{align*}
\psi(z,w)&=P^\w_{z,w}[ \text{ $X_{\gamma_k}\cdot\uhat=k$ for each $k\in \{z\cdot\uhat, 
\dotsc, z\cdot\uhat+b\}$, } \\
&\qquad\qquad \Xtil_{\gatil(z\cdot\uhat)}\cdot\uhat -z\cdot\uhat  \le b\, ]  \\
&\ge \kappa^b(1-Ce^{-sb})  \equiv \kappa_2>0.
\end{align*}
The uniform lower bound comes from the independence of 
the walks, from \eqref{overshoot} and 
from iterating assumption \eqref{level-ell-1}. 
By the Markov property 
\begin{align*}
&P^\w_{x_{i-1},y_{i-1}}[ \,X_{\gamma(\elltil(i))}\cdot\uhat = \elltil(i) ]\\
&\ge 
P^\w_{x_{i-1},y_{i-1}}[ \,\Xtil_{\gatil(\ell(i))}\cdot\uhat -\ell(i)  \le b,\,
\text{ $X_{\gamma_k}\cdot\uhat=k$ for each 
$k\in\{\ell(i),\dotsc,\ell(i)+b\}$
} ]\\  
&\ge E^\w_{x_{i-1},y_{i-1}}\bigl[ \psi(X_{\gamma(y_{i-1}\cdot\uhat)}, 
y_{i-1})\bigr] \ge \kappa_2. 
\end{align*}

The first iteration on which    the
 attempt to create a common level at $\elltil(i)$  succeeds is
\[I=\inf\{i\ge1:X_{\gamma_{\elltil(i)}}\cdot\uhat=\elltil(i)\}.\]
Then $\elltil(I)$ is  a new fresh common level and
consequently $L\le \elltil(I)$. This gives
the upper bound 
\[\gamma_L\le\gamma_{\elltil(I)}.\]
We develop an exponential tail bound for $\gamma_{\elltil(I)}$,
still under the assumption  $x\cdot\uhat<y\cdot\uhat$. 

 From the uniform bound above and the Markov property we get
\[
P^\w_{x,y}[ I>i] \le (1-\kappa_2)^i.
\]

  Lemma \ref{overshootlm} gives
 an exponential bound 
\be
P^\w_{x,y}\bigl[ (\Xtil_{\etatil_{i}}-\Xtil_{\etatil_{i-1}})\cdot\uhat
 \ge b\bigr]\le Ce^{-sb} 
\label{Xtil-bd-21}
\ee
because the distance $(\Xtil_{\etatil_{i}}-\Xtil_{\etatil_{i-1}})\cdot\uhat$ is a
sum of four overshoots: 
\begin{align*}
&\bigl(\Xtil_{\etatil_{i}}-\Xtil_{\etatil_{i-1}}\bigr)\cdot\uhat
= \bigl( \Xtil_{\gatil(X_{\eta_i}\cdot\uhat+1)}\cdot\uhat- X_{\eta_i}\cdot\uhat-1\bigr)
+1 +\bigl( X_{\gamma(\elltil(i))}\cdot\uhat - \elltil(i)\bigr) \\
&\quad + 
\bigl( \Xtil_{\gatil(\ell(i))}\cdot\uhat-\ell(i) \bigr) + 
\bigl( X_{\gamma(\Xtil_{\etatil_{i-1}}\cdot\uhat)}\cdot\uhat
-\Xtil_{\etatil_{i-1}}\cdot\uhat\bigr).
\end{align*}
Next, from the exponential tail bound on
$
\bigl(\Xtil_{\etatil_{i}}-\Xtil_{\etatil_{i-1}}\bigr)\cdot\uhat
$
and from 
\[
\elltil(i)\le \Xtil_{\etatil_i}\cdot\uhat 
=
\sum_{j=1}^i \bigl(\Xtil_{\etatil_{j}}-\Xtil_{\etatil_{j-1}}\bigr)
\cdot\uhat +y\cdot\uhat
\]
we get the large deviation estimate 
\[
P^\w_{x,y}[\elltil(i) \ge bi+ y\cdot\uhat]\le e^{-sbi}
\quad\text{for $i\ge 1$ and  $b\ge b_0$,}
\]
for some constants $0<s<\infty$ (small enough) and  $0<b_0<\infty$ 
 (large enough).  Combine
this with the bound above on $I$ to write 
\begin{align*}
P^\w_{x,y}[\elltil(I)\ge a] &\le P^\w_{x,y}[I> i] 
+P^\w_{x,y}[ \elltil(i)\ge a] \\
&\le e^{-si}+e^{sy\cdot\uhat-sa} \le 2e^{sy\cdot\uhat-sa}
\end{align*}
where we assume $a\ge 2b_0+y\cdot\uhat$ and set the integer
 $i=\tfl{b_0^{-1}(a-y\cdot\uhat)}$. Recall that $0<s<\infty$ is a constant
whose value can change from line to line.

From \eqref{exp-bound}  and an exponential Chebyshev 
\[
P^\w_{x,y}[\gamma_k> m]\le P^\w_{x}[X_m\cdot\uhat\le k] 
\le e^{sk-sx\cdot\uhat-h_1m}
\]
for all $x,y\in\Z^d$, $k\in\Z$ and $m\ge 0$.  Above and in the remainder
of this proof $h_1$, $h_2$ and  $h_3$ are small positive constants.
Finally we derive
\begin{align*}
P^\w_{x,y}[\gamma_{\elltil(I)}> m]&\le 
P^\w_{x,y}[\elltil(I)\ge k+x\cdot\uhat]
+  P^\w_{x,y}[\gamma_{k+x\cdot\uhat}> m]\\
&\le  2e^{s(y-x)\cdot\uhat-sk} +  e^{sk-h_1m}
\le Ce^{s(y-x)\cdot\uhat-h_2m}.
\end{align*}
To justify the inequalities  above
assume $m\ge 4sb_0/h_1> 4s/h_1 $ and pick $k$ in the range 
\[  
\frac{h_1m}{2s}+ (y-x)\cdot\uhat \le   k\le \frac{3h_1m}{4s}
+ (y-x)\cdot\uhat.
\]  

To summarize, at this point we have 
\be
P^\w_{x,y}[\gamma_{L}> m]\le Ce^{s(y-x)\cdot\uhat-h_2m}
\quad\text{for $x\cdot\uhat<y\cdot\uhat$.} 
\label{gamaux9}
\ee

To extend this estimate to the case 
$x\cdot\uhat\ge y\cdot\uhat$, simply allow $\Xtil$ to go
above $x$ and then  apply  \eqref{gamaux9}.  
By an application of the overshoot bound \eqref{overshoot}
and \eqref{gamaux9} at the point $(x, \Xtil_{\gatil(x\cdot\uhat+1)})$ 
\begin{align*}
&P^\w_{x,y}[\gamma_{L}> m]
\le E^\w_{x,y}  P^\w_{x, \Xtil_{\gatil(x\cdot\uhat+1)}}[\gamma_{L}> m]\\
&\le P^\w_{x,y}[\Xtil_{\gatil(x\cdot\uhat+1)}>x\cdot\uhat+\e m ]
+ Ce^{s\e m-h_2m}
\le Ce^{-h_3m}
\end{align*}
if we take $\e>0$ small enough.   

We have proved the lemma for $\gamma_L$, and the same argument
works for $\gatil_L$. 
\end{proof}

 Assuming
that  $X_0\cdot\uhat=\Xtil_0\cdot\uhat$
define the joint stopping times
\begin{align*}
(\rho,\tilde\rho)=
(\gamma^+_{M_{\beta\wedge\tilde\beta}\vee{\Mtil}_{\beta\wedge\tilde\beta}}
\,,
\tilde\gamma^+_{M_{\beta\wedge\tilde\beta}\vee
{\Mtil}_{\beta\wedge\tilde\beta}}\,)
\end{align*}
and 
\be
(\nu_1,\tilde\nu_1)=\begin{cases}
(\rho,\tilde\rho)
+(\gamma_L,\tilde\gamma_L)\circ\theta^{\rho, \rhotil} 
&\text{if $\rho\vee\rhotil<\infty$}\\
\infty &\text{if $\rho=\rhotil=\infty$.} 
\end{cases}\label{defnunutil}\ee 
Notice that $\rho$ and $\rhotil$ are finite or 
infinite together, and they are infinite 
iff neither walk backtracks below its initial level 
($\beta=\tilde\beta=\infty$). 
Let $\nu_0=\tilde\nu_0=0$ and for $k\ge0$ define
\begin{align*}
&(\nu_{k+1},\tilde\nu_{k+1})=(\nu_k,\tilde\nu_k)
+(\nu_1,\tilde\nu_1)\circ\theta^{\nu_k,\nutil_k}.
\end{align*}
Finally let
$(\nu,\tilde\nu)=(\gamma_L,\tilde\gamma_L)$,
$K=\sup\{k\ge0:\nu_k\vee\tilde\nu_k<\infty\}$, and
\be  (\mu_1,\mutil_1)
=(\nu,\tilde\nu)+(\nu_K,\tilde\nu_K)\circ\theta^{\nu,\nutil}.
\label{defmumutil}\ee 
These represent the first {\sl common regeneration times}
 of the two paths.  Namely, $X_{\mu_1}\cdot\uhat=
\Xtil_{\mutil_1}\cdot\uhat$ and for all $n\ge 1$,
\[ \text{$X_{\mu_1-n}\cdot\uhat< X_{\mu_1}\cdot\uhat
\le X_{\mu_1+n}\cdot\uhat$ and 
$\Xtil_{\mutil_1-n}\cdot\uhat< \Xtil_{\mutil_1}\cdot\uhat
\le \Xtil_{\mutil_1+n}\cdot\uhat$.}\]

Next we extend the exponential tail bound to the regeneration times. 

\begin{lemma}
There exist  constants $0<C<\infty$ and $\bar\eta\in(0,1)$  
such that, for all $x,y\in\mathbb{V}_d=\{z\in\Z^d: z\cdot\uhat=0\}$,
$k\ge0$, and $\P$-a.e.\ $\w$, we have
\be
P_{x,y}^\w(\mu_1\vee\mutil_1\ge k)
\le C(1-\bar\eta)^k.\label{mu-bound}
\ee
\label{mu-lemma}
\end{lemma}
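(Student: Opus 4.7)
The plan is to decompose
\[
\mu_1 = \gamma_L + \nu_K\circ\theta^{\gamma_L}, \qquad
\mutil_1 = \gatil_L + \nutil_K\circ\theta^{\gatil_L},
\]
and to control three ingredients: the time $\gamma_L\vee\gatil_L$ to reach the first common level, the number $K$ of failed regeneration attempts from successive common levels, and the duration of each individual attempt. All tail bounds will be uniform in $\w$ and in starting points lying on a common level, so that the joint strong Markov property at the stopping times $(\nu_k,\nutil_k)$ iterates them cleanly.

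Since $x,y\in\mathbb{V}_d$ gives $x\cdot\uhat=y\cdot\uhat$, Lemma \ref{gamlm2} directly yields $P^\w_{x,y}(\gamma_L\vee\gatil_L\ge m)\le Ce^{-a_2 m}$. For $K$, independence of $X$ and $\Xtil$ under $P^\w_{z,\ztil}$ combined with \eqref{beta-bound} gives $P^\w_{z,\ztil}(\beta=\tilde\beta=\infty)\ge \eta^2$ at every common-level pair $(z,\ztil)$. Because $(\nu_k,\nutil_k)$ is a joint stopping time that lands on a common level whenever $\nu_k<\infty$, and because $\{K\ge k\}=\{\nu_k<\infty\}$, the strong Markov property yields $P^\w_{x,y}(K\ge k+1\mid K\ge k)\le 1-\eta^2$, and hence $P^\w_{x,y}(K\ge k)\le(1-\eta^2)^k$.

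The substantive step is a uniform exponential tail on a single increment $(\nu_1,\nutil_1)=(\rho,\rhotil)+(\gamma_L,\gatil_L)\circ\theta^{\rho,\rhotil}$, conditional on the backtracking event $\{\beta\wedge\tilde\beta<\infty\}$. For $\rho\vee\rhotil$, I would first argue that $M_{\beta\wedge\tilde\beta}\vee\Mtil_{\beta\wedge\tilde\beta}$ has an exponential tail on this event by applying the computation behind \eqref{beta-bound} to each walk and taking a union bound; then conditional on this maximum being at most $\ell$, the further time for each walk (which lies at nonpositive level after backtracking) to climb above level $\ell$ has exponential tail via \eqref{exp-bound} and an exponential Chebyshev argument, uniformly in $\ell$. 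At times $(\rho,\rhotil)$, the level gap $|X_\rho\cdot\uhat-\Xtil_{\rhotil}\cdot\uhat|$ is dominated by the two overshoots at the common maximum, both exponentially tailed by Lemma \ref{overshootlm}, so a second application of Lemma \ref{gamlm2} at $(X_\rho,\Xtil_{\rhotil})$ produces an exponential tail for $(\gamma_L,\gatil_L)\circ\theta^{\rho,\rhotil}$ provided the overshoot rate is chosen larger than the constant $a_1$ from that lemma.

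Finally I would write
\[
\mu_1\vee\mutil_1\le (\gamma_L\vee\gatil_L)+\sum_{j=0}^{K-1}\bigl[(\nu_{j+1}-\nu_j)\vee(\nutil_{j+1}-\nutil_j)\bigr]
\]
and obtain \eqref{mu-bound} by a standard geometric-sum argument: for $n$ large, split on $\{K\le An\}$ versus $\{K>An\}$ for a small $A>0$; the geometric tail on $K$ controls the latter event, while on the former a union bound over at most $An$ increments, each with uniform exponential tail, produces the required decay. I expect the main obstacle to be the single-increment bound: one must simultaneously juggle the tail of the maximum before backtracking, the climb-back time, and the overshoot-dependent prefactor from Lemma \ref{gamlm2}, which forces a careful selection of small exponential Chebyshev parameters so that these several exponentials combine into a single exponential with nondegenerate rate.
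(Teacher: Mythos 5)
Your decomposition $\mu_1\vee\mutil_1\le(\gamma_L\vee\gatil_L)+\sum_{j<K}\bigl[(\nu_{j+1}-\nu_j)\vee(\nutil_{j+1}-\nutil_j)\bigr]$, the use of Lemma~\ref{gamlm2} for the initial segment, the geometric bound on $K$ via \eqref{beta-bound}, and the single-increment analysis of $(\rho,\rhotil)$ followed by the level gap at $(X_\rho,\Xtil_{\rhotil})$ all match the paper's proof. The paper works with exponential moments ($E^\w_{x,y}[e^{s_7\nu_1}\ind\{\beta\wedge\tilde\beta<\infty\}]\le\bar J_{s_7}$, iterated by the Markov property to $\bar J_{s_7}^k$) rather than pure tail bounds, but that is cosmetic.

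There is one genuine gap. You dispose of the level gap $G:=\lvert X_\rho\cdot\uhat-\Xtil_{\rhotil}\cdot\uhat\rvert$ by saying that Lemma~\ref{gamlm2} gives an exponential tail for $\gamma_L\circ\theta^{\rho,\rhotil}$ ``provided the overshoot rate is chosen larger than the constant $a_1$ from that lemma.'' Neither rate is a free parameter: the overshoot rate $s$ in Lemma~\ref{overshootlm} comes from Hypothesis~(M), the constant $a_1$ in Lemma~\ref{gamlm2} from its proof, and nothing guarantees $s>a_1$. The correct device (and the one used in the paper) is a truncation at scale $\e k$: split on $\{G\le\e k\}$ versus $\{G>\e k\}$. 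On the first event Lemma~\ref{gamlm2} gives $Ce^{a_1\e k-a_2k/2}$, which decays once $\e<a_2/(2a_1)$; on the second the overshoot bound gives $Ce^{-s\e k}$. No comparison of $s$ with $a_1$ is needed. The same truncation idea is also what makes your $\rho$ bound go through: the climb time to level $\ell=M_{\beta\wedge\tilde\beta}\vee\Mtil_{\beta\wedge\tilde\beta}$ is not exponentially tailed ``uniformly in $\ell$'' (its exponential moment grows like $(\text{const})^\ell$), so you must pair the geometric decay of $\beta\wedge\tilde\beta$ with a cut-off at $\beta\wedge\tilde\beta\le\e n$ to absorb that growth. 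Finally, in the last step a naive union bound over $An$ increments does not yield exponential decay of $P\bigl(\sum_{j\le An}\xi_j\ge n\bigr)$; you need exponential Chebyshev (the paper's $e^{-s_7n}\sum_{k\le\e n}E^\w[e^{s_7\nu_k}\ind\{\nu_k<\infty\}]\le\e n e^{-s_7n}\bar J_{s_7}^{\e n}$), and then the freedom to take $\e$ small is again what closes the argument.
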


\begin{proof}
We prove  geometric tail bounds successively
for $\gamma_1^+$, $\gamma_\ell^+$, $\gamma_{M_r}^+$, $\rho$, $\nu_1$, $\nu_k$,
and finally for $\mu_1$. 
To begin, \eqref{exp-bound} implies that
\[P_0^\w(\gamma_1^+\ge n)\le P_0^\w(X_{n-1}\cdot\uhat\le1)\le 
e^{s_2}(1-\eta_1)^{n-1}\]
with $\eta_1=s_2\delta/2$, for some small $s_2>0$.
By summation by parts 
\[E_0^\w(e^{s_3\gamma_1^+})\le e^{s_2}J_{s_3},\]
for a  small enough $s_3>0$ and $J_s=1+(e^s-1)/(1-(1-\eta_1)e^s)$. 
By the Markov property for  
$\ell\ge1$,
\[E_0^\w(e^{s_3\gamma_\ell^+})\le
\sum_{x\cdot\uhat>\ell-1}
E_0^\w(e^{s_3\gamma_{\ell-1}^+},X_{\gamma_{\ell-1}^+}=x)
E_x^\w(e^{s_3\gamma_\ell^+}).\]
But if $x\cdot\uhat>\ell-1$, then
$E_x^\w(e^{s_3\gamma_\ell^+})\le E_0^{T_x\w}(e^{s_3\gamma_1^+})$.
Therefore by induction
\begin{align}
\label{lambda-bound}
E_0^\w(e^{s_3\gamma_\ell^+})\le (e^{s_2}J_{s_3})^\ell\qquad
\text{for any integer $\ell\ge 0$}.
\end{align}

Next for an integer $r\ge1$, 
\begin{align*}
E_0^\w(e^{s_4\gamma_{M_r}^+})
&=\sum_{\ell=0}^\infty E_0^\w(e^{s_4\gamma_\ell^+},M_r=\ell)
\le\sum_{\ell=0}^\infty
E_0^\w(e^{2s_4\gamma_\ell^+})^{1/2}P_0^\w(M_r=\ell)^{1/2}\\
&\le C\sum_{\ell=0}^\infty
(e^{s_2}J_{2s_4})^{\ell/2}(\one\{\ell<3Mr|\uhat|\}+e^{-s_5\ell})^{1/2}
\le C (e^{s_2}J_{2s_4})^{Cr},
\end{align*}
for some $C$ and for positive but  small enough $s_2$, $s_4$, and $s_5$.
In the last inequality above we used the fact that $e^{s_2}J_{2s_4}$ converges
to $1$ as first $s_4\searrow 0$ and  then $s_2\searrow 0$.
In the second-to-last inequality we used
\eqref{bounded-step} to get the bound
\[\sum_{i=1}^r P_0^\w(X_i\cdot\uhat\ge \ell)\le
\sum_{i=1}^r e^{-s \ell}e^{M|\uhat|s i}  \le Ce^{-s_5\ell} 
\quad \text{if }\ell\ge 3M|\uhat|r.
\]
Above we assumed that the walk $X$ starts at $0$. 
Same bounds work for any  $x\in\mathbb{V}_d$ because
a shift  orthogonal to $\uhat$ does not alter levels, 
in particular 
$P_x^\w(M_r=\ell)=P_0^{T_x\w}(M_r=\ell)$. 

By this same observation we 
 show that for all $x,y\in\mathbb{V}_d$
\[E_{x,y}^\w(e^{s_4\gamma_{{\Mtil}_r}^+})\le C (e^{s_2}J_{2s_4})^{Cr}\]
by repeating the earlier series of inequalities.

Using \eqref{exp-bound} and these estimates gives 
for $x,y\in\mathbb{V}_d$
\begin{align*}
P_{x,y}^\w(\rho\ge n,\beta\wedge\tilde\beta<\infty)
&=\sum_{r=1}^\infty
P_{x,y}^\w(\gamma^+_{M_r\vee{\Mtil}_r}\ge n,\beta\wedge\tilde\beta=r)\\
&\le e^{-s_4n/2}\sum_{1\le r\le\e n}
E_x^\w(e^{s_4\gamma^+_{M_r}})^{1/2}
E_{x,y}^\w(e^{s_4\gamma^+_{{\Mtil}_r}})^{1/2}\\
&\quad+\sum_{r>\e n}
\bigl(P_x^\w\{X_r\cdot\uhat<x\cdot\uhat\}+
P_y^\w\{\Xtil_r\cdot\uhat<y\cdot\uhat\}\,\bigr)\\
&\le C\e n e^{-s_4 n/2}(e^{s_2}J_{2s_4})^{C\e n}+C(1-s_6\delta/2)^{\e n}.
\end{align*}
Taking $\e>0$ small enough shows the existence
of a constant $\eta_2>0$ such that for all $x,y\in\mathbb{V}_d$, 
 $n\ge1$, and $\P$-a.e.\ $\w$,
\begin{align*}
P_{x,y}^\w(\rho\ge n,\beta\wedge\tilde\beta<\infty)\le C(1-\eta_2)^n.
\end{align*}

Same bound works for $\rhotil$ also.  
We combine this with \eqref{gamma_L-bound} 
to get  a geometric tail bound for 
$\nu_1\one\{\beta\wedge\tilde\beta<\infty\}$.  Recall
 definition \eqref{defnunutil} and take $\e>0$ small. 
\begin{align*}
&P^\w_{x,y}[ \nu_1\ge k,\,  \beta\wedge\tilde\beta<\infty]\\
&\le P^\w_{x,y}[ \rho\ge k/2,\,  \beta\wedge\tilde\beta<\infty]
+P^\w_{x,y}[ 
 \beta\wedge\tilde\beta<\infty,\,
\lvert X_\rho\cdot\uhat-\Xtil_{\rhotil}\cdot\uhat\rvert> \e k  ]\\
&\qquad + P^\w_{x,y}[ \gamma_L\circ\theta^{\rho,\rhotil}\ge k/2,\,
 \beta\wedge\tilde\beta<\infty,\,
\lvert X_\rho\cdot\uhat-\Xtil_{\rhotil}\cdot\uhat\rvert\le \e k \, ].
\end{align*}
On the right-hand side above we have an exponential bound
for each of the three probabilities: the first probability gets it 
 from the estimate immediately above, the 
second  from a combination of that and \eqref{bounded-step},
and the third from  \eqref{gamma_L-bound}: 
\begin{align*}
&P^\w_{x,y}[ \gamma_L\circ\theta^{\rho,\rhotil}\ge k/2,\,
 \beta\wedge\tilde\beta<\infty,\,
\lvert X_\rho\cdot\uhat-\Xtil_{\rhotil}\cdot\uhat\rvert\le \e k \, ]\\
&=
E^\w_{x,y}\bigl[ \one\{
 \beta\wedge\tilde\beta<\infty,\,
\lvert X_\rho\cdot\uhat-\Xtil_{\rhotil}\cdot\uhat\rvert\le \e k \}\,
P^\w_{X_\rho\,,\,\Xtil_{\rhotil}} \{
 \gamma_L\ge k/2\}\,   \bigr]\\
&\le Ce^{a_1\e k- a_2k/2}. 
\end{align*} 
The constants in the  last bound above are those from 
 \eqref{gamma_L-bound}, and we choose $\e< a_2/(2a_1)$. 
We have thus established  that 
\[E_{x,y}^\w(e^{s_7\nu_1}\one\{\beta\wedge\tilde\beta<\infty\})
\le {\bar J}_{s_7}\]
for a small enough $s_7>0$,  with
 ${\bar J}_s=C(1-(1-\eta_3)e^s)^{-1}$ and 
$\eta_3>0$.

To move from $\nu_1$ to $\nu_k$ 
use the Markov property and induction:
\begin{align*}
&E_{x,y}^\w(e^{s_7\nu_k}\one\{\nu_k\vee\tilde\nu_k<\infty\})\\
&=\sum_{z,\tilde z}
E_{x,y}^\w(e^{s_7\nu_{k-1}}\one\{\nu_{k-1}\vee\tilde\nu_{k-1}<\infty,
X_{\nu_{k-1}}=z,
\Xtil_{\tilde\nu_{k-1}}=\tilde z\})\\
&\qquad\times \;E_{z,\tilde z}^\w(e^{s_7\nu_1}
\one\{\beta\wedge\tilde\beta<\infty\})\\
&\le {\bar J}_{s_7}E_{x,y}^\w(e^{s_7\nu_{k-1}}\one\{\nu_{k-1}\vee\tilde\nu_{k-1}<\infty\})
\le \dotsm \le {\bar J}_{s_7}^k.
\end{align*}

Next, use the Markov property at
 the joint stopping times $(\nu_k,\tilde\nu_k)$,
\eqref{gamma_L-bound},
\eqref{beta-bound}, and induction to derive 
\begin{align*}
&P_{x,y}^\w(K\ge k)\le P_{x,y}^\w(\nu_k\vee\tilde\nu_k<\infty)\\
&\qquad\le
\sum_{z,\tilde z}P_{x,y}^\w(\nu_{k-1}\vee\tilde\nu_{k-1}<\infty,
X_{\nu_{k-1}}=z,\Xtil_{\tilde\nu_{k-1}}=\tilde z)
P_{z,\tilde z}^\w(\beta\wedge\tilde\beta<\infty)\\
&\qquad\le
(1-\eta^2)P_{x,y}^\w(\nu_{k-1}\vee\tilde\nu_{k-1}<\infty)
\le(1-\eta^2)^k.
\end{align*}
Finally  use the Cauchy-Schwarz and Chebyshev inequalities
to write
\begin{align*}
P_{x,y}^\w(\nu_K\ge n)
&=\sum_{k\ge1}P_{x,y}^\w(\nu_k\ge n,K=k)\\
&\le\sum_{k>\e n}(1-\eta^2)^k+e^{-s_7n}\sum_{1\le k\le\e n}
E_{x,y}^\w(e^{s_7\nu_k}\one\{\nu_k\vee\tilde\nu_k<\infty\})\\
&\le C(1-\eta^2)^{\e n}+C\e n e^{-s_7n} {\bar J}_{s_7}^{\e n}.
\end{align*}
Looking at the definition \eqref{defmumutil} of $\mu_1$
we see that an exponential tail bound follows by applying
 \eqref{gamma_L-bound} to the $\nu$-part and by taking 
 $\e>0$ small enough in the last calculation above.
Repeat the same argument for $\tilde\mu_1$ to
conclude the proof of \eqref{mu-bound}.
\end{proof}

After these preliminaries define the sequence of
common regeneration times by $\mu_0=\mutil_0=0$ and 
\be
(\mu_{i+1},\tilde\mu_{i+1})=(\mu_i,\tilde\mu_i)
+(\mu_1,\tilde\mu_1)\circ\theta^{\mu_i, \tilde\mu_i}.
\label{defmukmutilk}
\ee
The next tasks are to identify suitable Markovian 
structures and to develop a coupling.

\begin{proposition} 
The process  $(\Xtil_{\mutil_i}-X_{\mu_i})_{i\ge 1}$ is a Markov chain
on $\mathbb{V}_d$ with transition probability
\be
q(x,y)=P_{0,x}[  \Xtil_{\mutil_1}-X_{\mu_1}= y\,\vert\, 
\beta=\tilde\beta=\infty].
\label{defqxy}
\ee
\label{Ymcprop}\end{proposition}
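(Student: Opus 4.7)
The plan is to establish the Markov property via an annealed renewal decomposition for pairs of walks at common regeneration times, in direct analogy with Proposition~1.3 of Sznitman--Zerner \cite{szlln} for single walks. By the very construction of common regeneration, $X_{\mu_i}\cdot\uhat=\Xtil_{\mutil_i}\cdot\uhat$, so the differences $\Xtil_{\mutil_i}-X_{\mu_i}$ live in $\mathbb{V}_d$ and the chain is $\mathbb{V}_d$-valued as claimed.

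By shift-invariance of the i.i.d.\ measure $\P$, the annealed pair law satisfies $P_{z,w}\bigl[(X-z,\Xtil-z)\in\cdot\,\bigr]=P_{0,w-z}[\,\cdot\,]$ for all $z,w\in\Z^d$. Combined with the recursion $(\mu_{i+1},\mutil_{i+1})=(\mu_i,\mutil_i)+(\mu_1,\mutil_1)\circ\theta^{\mu_i,\mutil_i}$, this reduces the full Markov property to the one-step identity: conditional on the past $\bigl(X_{0,\mu_i},\Xtil_{0,\mutil_i}\bigr)$ and on $\Xtil_{\mutil_i}-X_{\mu_i}=x$, the next displacement $\Xtil_{\mutil_{i+1}}-X_{\mu_{i+1}}$ is distributed as $\Xtil_{\mutil_1}-X_{\mu_1}$ under $P_{0,x}[\,\cdot\mid\beta=\tilde\beta=\infty]$.

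To prove this one-step identity I would adapt the standard Sznitman--Zerner renewal decomposition to the pair. Write $L_i=X_{\mu_i}\cdot\uhat$. The ``past'' event $\{(\mu_i,\mutil_i)=(m,n),\,X_{0,m}=\cdot,\,\Xtil_{0,n}=\cdot\}$ is measurable with respect to the environment at sites $\{z:z\cdot\uhat<L_i\}$ together with the independent walk randomness on $[0,m]\times[0,n]$. The defining condition of common regeneration forces both walks from times $\mu_i,\mutil_i$ onwards to stay at levels $\ge L_i$; after the shift by $-X_{\mu_i}$ this is exactly the event $\{\beta=\tilde\beta=\infty\}$ for the pair $\bigl(X_{\mu_i+\cdot}-X_{\mu_i},\,\Xtil_{\mutil_i+\cdot}-X_{\mu_i}\bigr)$ started at $(0,x)$, and it reads only environments at levels $\ge L_i$. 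Because $\P$ is a product measure, the two environment fragments are independent, and integrating against $\P(d\w)$ factorizes the joint law as the past law times the conditioned future law $P_{0,x}[\,\cdot\mid\beta=\tilde\beta=\infty]$. Specializing to the event that the shifted continuation has first-common-regeneration displacement equal to $y$ then yields precisely the transition $q(x,y)$ of \eqref{defqxy}.

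The main technical obstacle sits in this last step: one must simultaneously verify that (i) the past data depend only on the environment strictly below level $L_i$ and on the walks' own randomness; (ii) the no-backtracking-below-$L_i$ constraint embedded in the definition of $(\mu_i,\mutil_i)$ pulls back, under the shift by $X_{\mu_i}$, to exactly $\{\beta=\tilde\beta=\infty\}$ for the fresh pair at $(0,x)$; and (iii) the nested stopping-time construction from Lemmas \ref{gamlm2} and \ref{mu-lemma} used to build $(\mu_1,\mutil_1)$ respects this split across level $L_i$. Each ingredient is a straightforward but tedious adaptation of Sznitman--Zerner \cite{szlln} carried out in parallel on two independent walks in a common environment.
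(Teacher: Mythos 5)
Your outline is correct and follows essentially the same route as the paper's proof: decompose over the data of the first $i$ common regenerations, observe that the past is measurable with respect to the environment strictly below the common regeneration level while the continuation (including the no-backtracking event $\{\beta=\tilde\beta=\infty\}$ for the restarted pair) reads only environments at or above that level, and use the product structure of $\P$ to factorize, then divide and multiply by $P[\beta=\tilde\beta=\infty]$ to produce $q$ and reabsorb that factor into the past. The three "technical obstacles" you flag are exactly the points the paper's computation verifies explicitly via its sum over the vectors in $\Psi$ and the events $A_{k,b,\btil}$, $B_{b,\btil}$.
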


Note that the time-homogeneous 
Markov chain does not start from $\Xtil_{0}-X_{0}$ because the transition
  to $\Xtil_{\mutil_1}-X_{\mu_1}$ does not include the condition
$\beta=\tilde\beta=\infty$. 

\begin{proof} Express the iteration of the common regeneration times
as
\[
(\mu_i,\mutil_i)= (\mu_{i-1},\mutil_{i-1})\,+\,
 \bigl( (\nu,\nutil) + (\nu_{K},\nutil_{K})
\circ\theta^{\nu,\nutil}\bigr)\circ\theta^{\mu_{i-1},\mutil_{i-1}},
\quad i\ge 1.
\]
Let $K_i$ be the value of $K$ at the $i$th iteration:
\[ K_i= K\circ\theta^{\nu,\nutil}\circ\theta^{\mu_{i-1},\mutil_{i-1}}.
\]

Let $n\ge 2$ and $z_1,\dotsc, z_n\in\mathbb{V}_d$.  
Write 
\begin{align}
&P_{0,z}[  \Xtil_{\mutil_i}-X_{\mu_i}=z_i\,\text{ for $ 1\le i\le n$}]
\label{temp-B-1}\\
&=\sum_{
(k_i, m_i,\mtil_i,v_i,\vtil_i)_{1\le i\le n-1}\in \Psi}
P_{0,z}\bigl[ K_i=k_i,\, \mu_i=m_i,\,\mutil_i=\mtil_i,\, \nn\\
&\qquad\qquad\qquad
\qquad   X_{m_i}=v_i\,\text{ and }\,\Xtil_{\mtil_i}=\vtil_i
\,\text{ for  $1\le i\le n-1$},\; \nn\\[3pt]
&\qquad \qquad \qquad\qquad\qquad
(\Xtil_{\mutil_1}-X_{\mu_1})\circ\theta^{m_{n-1},\mtil_{n-1}}=z_n \bigr]. 
\nn\end{align}
Above $\Psi$ is the set of 
vectors  $(k_i, m_i,\mtil_i,v_i,\vtil_i)_{1\le i\le n-1}$ 
such that   $k_i$ is nonnegative and $m_i$, $\mtil_i$, 
$v_i\cdot\uhat$, and  $\vtil_i\cdot\uhat$ are all
positive and  strictly increasing in $i$, 
 and $\vtil_i-v_i=z_i$.

Define the events
\[
A_{k,b,\btil}=\{ \nu+\nu_k\circ\theta^{\nu,\nutil}=b
\,,\, \nutil+\nutil_k\circ\theta^{\nu,\nutil}=\btil\}
\]
and 
\[
B_{b,\btil} =\{ X_j\cdot\uhat\ge X_0\cdot\uhat \,\text{ for $ 1\le j\le b$}\,,\;
\Xtil_j\cdot\uhat\ge \Xtil_0\cdot\uhat \,\text{ for $ 1\le j\le \btil$}\}.
\]
Let $m_0=\mtil_0=0$, 
 $b_i=m_i-m_{i-1} $ and $\btil_i=\mtil_i-\mtil_{i-1} $. 
Rewrite  the sum from above as
\begin{align*}
&\sum_{
(k_i, m_i,\mtil_i,v_i,\vtil_i)_{1\le i\le n-1}\in\Psi}
E_{0,z}\Bigl[\;  \prod_{i=1}^{n-1} \one\{A_{k_i, b_i, \btil_i}\}\circ\theta^{m_{i-1},\mtil_{i-1}} \\
&\qquad \times 
 \prod_{i=2}^{n-1} \one\{B_{b_i,\,\btil_i }\}\circ\theta^{m_{i-1},\mtil_{i-1}},
 \, X_{m_i}=v_i\,\text{ and }\,\Xtil_{\mtil_i}=\vtil_i\,\text{ for  $1\le i\le n-1$},\; \\[3pt]
&\qquad \qquad \qquad \beta\circ\theta^{m_{n-1}}=\tilde\beta\circ\theta^{\mtil_{n-1}}=\infty\,,\,
(\Xtil_{\mutil_1}-X_{\mu_1})\circ\theta^{m_{n-1},\mtil_{n-1}}=z_n \Bigr].\\
\intertext{Next restart the walks at times $(m_{n-1},\mtil_{n-1})$ to turn
the sum into the following.
}  
&\sum_{(k_i, m_i,\mtil_i,v_i,\vtil_i)_{1\le i\le n-1}\in\Psi}
\E \biggl\{ \; E^\w_{0,z}\Bigl[\;  \prod_{i=1}^{n-1} 
\one\{A_{k_i, b_i, \btil_i}\}\circ\theta^{m_{i-1},\mtil_{i-1}} \\
&\qquad \times 
 \prod_{i=2}^{n-1} \one\{B_{b_i,\,\btil_i }\}\circ\theta^{m_{i-1},\mtil_{i-1}},
 \, X_{m_i}=v_i\,\text{ and }\,\Xtil_{\mtil_i}=\vtil_i\,\text{ for  $1\le i\le n-1$}\Bigr] \\[3pt]
&\qquad \qquad \qquad 
\times P^\w_{v_{n-1}\,,\,\vtil_{n-1}}\Bigl[
\beta=\tilde\beta=\infty\,,\,
\Xtil_{\mutil_1}-X_{\mu_1}=z_n \Bigr]  \;\biggr\}.
\end{align*}
Inside the outermost braces 
the events in the first quenched expectation force  the level
\[ \ell= 
X_{m_{n-1}}\cdot\uhat=v_{n-1}\cdot\uhat=\Xtil_{\mtil_{n-1}}\cdot\uhat
=\vtil_{n-1}\cdot\uhat\]  to be 
a new maximal level for both walks.  Consequently   the first 
quenched expectation is a function of 
$\{\w_x: x\cdot\uhat<\ell\}$ while the last quenched
probability is a function of 
$\{\w_x: x\cdot\uhat\ge \ell\}$.
By independence of the environments, the sum becomes
\begin{align}
&\sum_{(k_i, m_i,\mtil_i,v_i,\vtil_i)_{1\le i\le n-1}\in\Psi}
E_{0,z}\Bigl[\;  \prod_{i=1}^{n-1} 
\one\{A_{k_i, b_i, \btil_i}\}\circ\theta^{m_{i-1},\mtil_{i-1}} 
\label{temp-B0}\\
&\qquad \times 
 \prod_{i=2}^{n-1} \one\{B_{b_i,\,\btil_i }\}
\circ\theta^{m_{i-1},\mtil_{i-1}},
 \, X_{m_i}=v_i\,\text{ and }\,\Xtil_{\mtil_i}=\vtil_i\,
\text{ for  $1\le i\le n-1$}\Bigr] \nn            \\[2pt]
&\qquad \qquad \qquad 
\times P_{v_{n-1}\,,\,\vtil_{n-1}}\Bigl[
\beta=\tilde\beta=\infty\,,\,
\Xtil_{\mutil_1}-X_{\mu_1}=z_n \Bigr].\nn
\end{align}
By a shift and a conditioning the 
last probability transforms as follows.  
\begin{align*}
&P_{v_{n-1}\,,\,\vtil_{n-1}}\bigl[
\beta=\tilde\beta=\infty\,,\,
\Xtil_{\mutil_1}-X_{\mu_1}=z_n \bigr]\\
&=P_{0, z_{n-1}}\bigl[
\Xtil_{\mutil_1}-X_{\mu_1}=z_n \,\big\vert\,  
\beta=\tilde\beta=\infty  \,\bigr]
P_{v_{n-1}\,,\,\vtil_{n-1}}\bigl[ \beta=\tilde\beta=\infty \bigr]
\\
&=q(z_{n-1},z_n) 
P_{v_{n-1}\,,\,\vtil_{n-1}}\bigl[ \beta=\tilde\beta=\infty \bigr].
\end{align*}
Now reverse the above use of independence to put the probability 
\[ P_{v_{n-1}\,,\,\vtil_{n-1}}[ \beta=\tilde\beta=\infty]\]
back together with the  expectation 
 \eqref{temp-B0}. 
Inside this
expectation  this furnishes  the event 
$\beta\circ\theta^{m_{n-1}}=\tilde\beta\circ\theta^{\mtil_{n-1}}=\infty$
and with this the union of the entire collection of events turns back into 
$\Xtil_{\mutil_i}-X_{\mu_i}=z_i$  for $1\le i\le n-1$.
Going back to the beginning on line \eqref{temp-B-1}
we see that we have now shown 
\begin{align*}
&P_{0,z}[  \Xtil_{\mutil_i}-X_{\mu_i}=z_i\,\text{ for $ 1\le i\le n$}]\\
&=P_{0,z}[  \Xtil_{\mutil_i}-X_{\mu_i}=z_i\,\text{ for $ 1\le i\le n-1$}]
q(z_{n-1},z_n).
\end{align*} 
Continue by induction. 
\end{proof}

The Markov chain $Y_k=\Xtil_{\mutil_k}-X_{\mu_k}$ will be 
compared to a random walk obtained by performing the same 
construction of joint regeneration times to two
independent walks in independent environments.   
To indicate the difference in construction we change
notation. Let the 
pair of walks $(X,\Xbar)$ obey $P_0\otimes P_z$ with $z\in\mathbb{V}_d$,
and denote the first backtracking time of the $\Xbar$ walk
by $\betabar=\inf\{n\ge 1: \Xbar_n\cdot\uhat<\Xbar_0\cdot\uhat\}$.
Construct the 
common regeneration times $(\rho_k,\rhobar_k)_{k\ge 1}$
for $(X,\Xbar)$  by the same recipe
[\eqref{defnunutil}, \eqref{defmumutil} and \eqref{defmukmutilk}] 
 as was used to construct
$(\mu_k,\mutil_k)_{k\ge 1}$ for $(X,\Xtil)$.
Define $\Ybar_k=\Xbar_{\rhobar_k}-X_{\rho_k}$.  An analogue
of the previous
proposition, which we will not spell out, 
shows that  $(\Ybar_k)_{k\ge 1}$ is a Markov chain with transition
\be
\qbar(x,y)=
P_0\otimes P_x[  \Xbar_{\rhobar_1}-X_{\rho_1}= y\,\vert\, 
\beta=\betabar=\infty].
\label{defqbar}\ee

 In the next two proofs we make use of the 
following decomposition. 
Suppose  $x\cdot\uhat=y\cdot\uhat=0$, and let
 $(x_1,y_1)$ be another pair of points on a common, higher
 level: $x_1\cdot\uhat=y_1\cdot\uhat=\ell>0$.  Then we can write
\be\begin{split}
&\{ (X_0,\Xtil_0)=(x,y),\,\beta=
\tilde{\beta}=\infty, \, 
(X_{\mu_1},\Xtil_{\mutil_1})=(x_1,y_1)\}\\
&\quad =
\bigcup_{(\gamma,\gatil)} 
\{ X_{0,n(\gamma)}=\gamma,\, \Xtil_{0,n(\gatil)}=\gatil, \,
\beta\circ\theta^{n(\gamma)}=
\tilde\beta\circ\theta^{n(\gatil)}=\infty\}.
\end{split}\label{pathdecomp1}\ee
Here  $(\gamma,\gatil)$ range over all pairs of paths that connect
$(x,y)$ to $(x_1,y_1)$, that stay between levels $0$ and $\ell-1$
before the final points, and 
for which a common regeneration fails at all levels before $\ell$.  
 $n(\gamma)$ is the index of the final point 
 along the path, so for example
$\gamma=(x=z_0,z_1,\dotsc, z_{n(\gamma)-1},  z_{n(\gamma)}=x_1)$.  

\begin{proposition} The process 
$(\Ybar_k)_{k\ge 1}$ is a symmetric random walk on $\mathbb{V}_d$ 
and its   transition probability satisfies
\[
\qbar(x,y)=\qbar(0,y-x)=\qbar(0,x-y)=
P_0\otimes P_0[  \Xbar_{\rhobar_1}-X_{\rho_1}= y-x\,\vert\, 
\beta=\betabar=\infty].
\]
\label{Ybarprop2}
\end{proposition}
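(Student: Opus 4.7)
The statement has two parts to verify: translation invariance (which, combined with the Markov property, upgrades the chain to a random walk) and the symmetry $\qbar(0,z)=\qbar(0,-z)$. The Markov property of $\Ybar_k$ is not stated here but, as the authors note, follows by literally repeating the proof of Proposition \ref{Ymcprop}: the factorization \eqref{pathdecomp1} still works because joint regeneration puts both walks at a fresh maximum, so the pre- and post-regeneration pieces depend on disjoint collections of environment variables, and now in fact on \emph{two} independent environments. Hence the plan is just to verify the two symmetries of $\qbar$.

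For the translation invariance $\qbar(x,y)=\qbar(0,y-x)$, I would exploit that $X$ and $\Xbar$ live in independent product environments $\w$ and $\wbar$. Given $x\in\mathbb{V}_d$, define $\Xbar'_n=\Xbar_n-x$ and $\wbar'=T_{-x}\wbar$. Under $P_0\otimes P_x$ the joint law of $(X,\Xbar',\w,\wbar')$ equals the joint law of $(X,\Xbar,\w,\wbar)$ under $P_0\otimes P_0$, because $\wbar$ and $T_{-x}\wbar$ have the same distribution under the i.i.d.\ product $\P$. Since $x\cdot\uhat=0$, the levels $\Xbar_n\cdot\uhat=\Xbar'_n\cdot\uhat$ are untouched, so every ingredient in the construction of $(\rho_1,\rhobar_1)$ --- running maxima, backtracking times, the common-level time $L$, and the iteration in \eqref{defnunutil}--\eqref{defmukmutilk} --- depends on $(X,\Xbar)$ only through their levels, and so depends on the pair only through $(X,\Xbar')$ together with $x$. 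Therefore $(\rho_1,\rhobar_1)$ and the event $\{\beta=\betabar=\infty\}$ are identical under both laws, and
\[
\Xbar_{\rhobar_1}-X_{\rho_1}=\Xbar'_{\rhobar_1}+x-X_{\rho_1}.
\]
Taking $y\in\mathbb{V}_d$ this yields $\qbar(x,y)=\qbar(0,y-x)$.

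For the reflection symmetry $\qbar(0,z)=\qbar(0,-z)$, I would swap the roles of the two walks. Under $P_0\otimes P_0$ the pair $(X,\w)$ and $(\Xbar,\wbar)$ are exchangeable, since the environments are independent and identically distributed as $\P$ and the walks have the same quenched law in their own environments. The common-regeneration recipe is manifestly symmetric in the two walks: the running maxima $M$ and $\Mbar$, the times $\rho$ and $\rhobar$, the level $L$, and the iteration all enter through the symmetric operations $\vee$ and $\wedge$. Under the swap, $(\rho_1,\rhobar_1)\mapsto(\rhobar_1,\rho_1)$ and the event $\{\beta=\betabar=\infty\}$ is preserved, while $\Xbar_{\rhobar_1}-X_{\rho_1}\mapsto X_{\rho_1}-\Xbar_{\rhobar_1}=-(\Xbar_{\rhobar_1}-X_{\rho_1})$. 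Hence the conditional distribution of $\Ybar_1$ is symmetric around $0$.

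The main thing to watch is the first step: one has to be sure that the shift $\Xbar\mapsto\Xbar-x$ really decouples cleanly, and for that the independence of the two environments is essential. This is exactly the place where the behavior of $\Ybar$ differs from that of the original Markov chain $Y_k=\Xtil_{\mutil_k}-X_{\mu_k}$: for $Y_k$ the walks share one environment, a translation of $\Xtil$ drags the environment with it, and no such random-walk reduction is available. Everything else --- the Markov property and the symmetry under swapping the two walks --- is then routine bookkeeping given the symmetric and level-based nature of the construction \eqref{defnunutil}--\eqref{defmukmutilk}.
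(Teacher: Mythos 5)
Your proposal is correct and rests on the same essential ingredients as the paper's proof: independence of the two walks (hence of their environments), shift-invariance of the i.i.d.\ product measure $\P$, the fact that a shift by $x\in\mathbb{V}_d$ preserves all levels and therefore all the regeneration functionals, and exchangeability of the two walks for the reflection symmetry. The only difference is presentational: you package the translation step as a process-level distributional identity $(\Xbar,\wbar)\mapsto(\Xbar-x,T_{-x}\wbar)$, whereas the paper reaches the same conclusion by decomposing the event into sums over pairs of paths, factoring out $(P_0[\beta=\infty])^2$, and shifting each second-coordinate path by $-x$; both executions deliver exactly $\qbar(x,y)=\qbar(0,y-x)$, and your concluding remark about why this fails for the common-environment chain $Y$ matches the paper's motivation for introducing $\Ybar$ in the first place.
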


\begin{proof}   It remains  to show that for independent $(X,\Xbar)$
 the transition \eqref{defqbar}
 reduces to  a symmetric random walk. This becomes
obvious once probabilities are decomposed into sums over paths
because the events of interest are insensitive to shifts by 
$z\in\mathbb{V}_d$.
\be\begin{split}
&P_0\otimes P_x[\beta=\betabar=\infty\,,\,
  \Xbar_{\rhobar_1}-X_{\rho_1}= y]\\
&=\sum_w P_0\otimes P_x[\beta=\betabar=\infty\,,\,
  X_{\rho_1}= w\,,\,\Xbar_{\rhobar_1}=y+w ]\\
&=\sum_w  \sum_{(\gamma, \gabar)} 
P_0[ X_{0,n(\gamma)}=\gamma,\, \beta\circ\theta^{n(\gamma)}=\infty]  
P_x[ X_{0,n(\gabar)}=\gabar, \,
\beta\circ\theta^{n(\gabar)}=\infty]\\   
&=\sum_w  \sum_{(\gamma, \gabar)} 
P_0[ X_{0,n(\gamma)}=\gamma] P_x[ X_{0,n(\gabar)}=\gabar]  
\bigl(P_0[\beta=\infty]\bigr)^2. 
\end{split}  
\label{temp-gam-7}
\ee

Above we used the decomposition idea from \eqref{pathdecomp1}.
Here  $(\gamma, \gabar)$ range  over the appropriate
class of pairs of
paths in $\Z^d$  such that $\gamma$ goes from $0$ to $w$ and  
$\gabar$ goes from $x$ to $y+w$.
The independence for the last equality above comes from 
noticing that the quenched probabilities
$
\text{$P^\w_0[X_{0,n(\gamma)}=\gamma]$ and $P^\w_w[\beta=\infty]$ }
$
depend on independent collections of environments. 
  
The probabilities on the last line of 
\eqref{temp-gam-7}   are not changed if each pair $(\gamma,\gabar)$ 
is replaced
by $(\gamma,\gamma')=(\gamma,\gabar-x)$.  These pairs connect
 $(0,0)$ to $(w,y-x+w)$.
  Because $x\in\mathbb{V}_d$ satisfies 
$x\cdot\uhat=0$, 
 the shift has not changed regeneration levels. 
This shift turns $P_x[ X_{0,n(\gabar)}=\gabar]$ on   the last line
of \eqref{temp-gam-7} into $P_0[ X_{0,n(\gamma')}=\gamma']$.
We can reverse  the steps in 
\eqref{temp-gam-7}   to arrive at the probability 
\[
P_0\otimes P_0[\beta=\betabar=\infty\,,\,
  \Xbar_{\rhobar_1}-X_{\rho_1}= y-x].
 \]
This proves $\qbar(x,y)=\qbar(0,y-x)$.

Once both walks start at $0$ 
 it is immaterial which is labeled 
$X$ and which $\Xbar$, hence symmetry holds. 
\end{proof}

It will be useful to know that $\qbar$ inherits all possible
transitions from $q$.

\begin{lemma} If $q(z,w)>0$ then also $\qbar(z,w)>0$. 
\label{qqbarlm3}
\end{lemma}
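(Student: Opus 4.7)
The plan is to decompose $q(z,w)$ over finite path pairs exactly as was done for $\qbar$ in the proof of Proposition \ref{Ybarprop2}, isolate a single realization of positive probability, and then check that the analogous realization also has positive probability under the independent-environment law defining $\qbar$. Applying the decomposition \eqref{pathdecomp1} at $(x,y)=(0,z)$ with terminal pair $(u,u+w)$,
\begin{align*}
q(z,w)\cdot P_{0,z}[\beta=\tilde\beta=\infty]
&= \sum_{(u,\gamma,\gatil)}
P_{0,z}\bigl[X_{0,n(\gamma)}=\gamma,\ \Xtil_{0,n(\gatil)}=\gatil,\\
&\qquad\qquad \beta\circ\theta^{n(\gamma)}=\tilde\beta\circ\theta^{n(\gatil)}=\infty\bigr],
\end{align*}
with $(\gamma,\gatil)$ ranging over pairs of finite paths from $(0,z)$ to $(u,u+w)$ with terminal level $\ell=u\cdot\uhat>0$, intermediate sites at levels in $[0,\ell)$, and shapes that preclude an earlier common regeneration. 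Since $q(z,w)>0$, I would fix one triple $(u^*,\gamma^*,\gatil^*)$ whose summand is strictly positive.

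The quenched probability of that summand factors as
\[
P^\w_0[X_{0,n(\gamma^*)}=\gamma^*]\cdot P^\w_z[\Xtil_{0,n(\gatil^*)}=\gatil^*]\cdot P^\w_{u^*}[\beta=\infty]\cdot P^\w_{u^*+w}[\beta=\infty],
\]
where the first two factors are $\sigma\{\w_x:x\cdot\uhat<\ell\}$-measurable (the paths do not read $\w$ at their endpoints) while the last two are $\kS_\ell$-measurable. The product structure of $\P$ then splits its expectation into two $\P$-expectations, each of which is strictly positive because their product is. Since all integrands lie in $[0,1]$, monotonicity inside each surviving expectation forces $\E P^\w_0[X_{0,n(\gamma^*)}=\gamma^*]>0$ and $\E P^\w_z[\Xtil_{0,n(\gatil^*)}=\gatil^*]>0$.

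For $\qbar(z,w)$, note first that $P_0\otimes P_z[\beta=\betabar=\infty]=P_0[\beta=\infty]^2>0$ by translation invariance of $\P$ and \eqref{beta-bound}. Under $P_0\otimes P_z$ the walks $X,\Xbar$ use independent environments $\w,\wbar$, and following $(\gamma^*,\gatil^*)$ up to their endpoints and then not backtracking forces $(\rho_1,\rhobar_1)=(n(\gamma^*),n(\gatil^*))$ and $\Xbar_{\rhobar_1}-X_{\rho_1}=w$ (this is where the ``no earlier common regeneration'' property of the paths is used). Consequently
\begin{align*}
&\qbar(z,w)\cdot P_0[\beta=\infty]^2 \\
&\ \ge\ \E\bigl[P^\w_0[X_{0,n(\gamma^*)}=\gamma^*]\,P^\w_{u^*}[\beta=\infty]\bigr]\cdot \E\bigl[P^{\wbar}_z[\Xbar_{0,n(\gatil^*)}=\gatil^*]\,P^{\wbar}_{u^*+w}[\betabar=\infty]\bigr].
\end{align*}
Splitting each of these two expectations further across $\{x\cdot\uhat<\ell\}$ and $\{x\cdot\uhat\ge\ell\}$ yields four separate $\P$-expectations: the two path factors are positive by the previous paragraph, and the two no-backtracking factors are each $\ge\eta$ by \eqref{beta-bound}. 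Hence $\qbar(z,w)>0$.

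The main piece of bookkeeping is verifying that the event ``$(\gamma^*,\gatil^*)$ is the first common regeneration pair with endpoints at level $\ell$'' is determined by the path shapes alone rather than by the futures of the walks; once this is granted the event splits cleanly into a strictly-below-$\ell$ part and an at-or-above-$\ell$ part, and the rest is routine. This path-only characterization is precisely what is encoded in \eqref{pathdecomp1}, so the argument inherits it and avoids redoing the combinatorics.
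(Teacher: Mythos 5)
Your proof is correct and follows the same route as the paper's: decompose $q(z,w)$ via \eqref{pathdecomp1}, extract one path pair $(\gamma^*,\gatil^*)$ with $\E\bigl[P^\w[\gamma^*]P^\w[\gatil^*]\bigr]>0$, deduce that each annealed path probability is positive, and reassemble the same pair under the independent-environment law to lower-bound $\qbar(z,w)$. The paper gets the positivity of the individual factors slightly more directly (for nonnegative $f,g$, $\E[fg]>0$ forces $\E f>0$ and $\E g>0$, no level-splitting needed), but your extra bookkeeping is harmless and the argument is the same.
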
 

\begin{proof} 
By the decomposition from \eqref{pathdecomp1}
 we can express
\[
P_{x,y}[  (X_{\mu_1},\Xtil_{\mutil_1})=(x_1,y_1) \vert \beta=
\tilde{\beta}=\infty ] 
=
\sum_{(\gamma,\gatil)} 
\frac{\E P^\w[\gamma]P^\w[\gatil]P^\w_{x_1}[\beta=\infty]
P^\w_{y_1}[\beta=\infty ]}{P_{x,y}[\beta=
\tilde{\beta}=\infty ]}.
\]
If this probability is positive, then at least one pair 
$(\gamma,\gatil)$ satisfies $\E P^\w[\gamma]P^\w[\gatil]>0$.
This implies that $ P[\gamma]P[\gatil]>0$ so that also 
\[
P_{x}\otimes P_{y}[  (X_{\mu_1},\Xtil_{\mutil_1})=(x_1,y_1) \vert \beta=
\tilde{\beta}=\infty ] >0. 
\qedhere
\]
\end{proof} 

In the sequel we detach  the notations $Y=(Y_k)$ and $\Ybar=(\Ybar_k)$ 
from their original definitions  in terms of the walks
$X$, $\Xtil$ and $\Xbar$,  
 and use  $(Y_k)$ and $(\Ybar_k)$  to denote canonical Markov chains with
transitions $q$ and $\qbar$. 
Now we construct  a coupling.

\begin{proposition} The single-step transitions $q(x,y)$ for $Y$ and 
$\qbar(x,y)$ for $\Ybar$ 
can be coupled in such a way that, when the processes start
from a common state $x$, 
\[
P_{x,x}[Y_1\ne\Ybar_1] \le Ce^{-\alpha_1\abs{x}}
\]
for all $x\in\mathbb{V}_d$.  Here $C$ and $\alpha_1$ are finite
positive constants independent of $x$. 
\label{qqbarprop4}\end{proposition}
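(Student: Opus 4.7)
The plan is to establish the total variation bound $d_{\mathrm{TV}}(q(x,\cdot),\qbar(x,\cdot))\le Ce^{-\alpha_1|x|}$ and then invoke the maximal coupling of probability measures. The starting point is the path-sum representation derived in the proof of Proposition~\ref{Ymcprop}: setting $N_1:=P_{0,x}[\beta=\tilde\beta=\infty]$ and $N_2:=P_0[\beta=\infty]\,P_x[\beta=\infty]$, both bounded below by $\eta^2$ via \eqref{beta-bound},
\[
N_1\,q(x,y)=\sum_{(\gamma,\gatil)}\E\bigl[P^\w[\gamma]\,P^\w[\gatil]\bigr]\cdot\E\bigl[P^\w_{x_1}[\beta=\infty]\,P^\w_{y_1}[\beta=\infty]\bigr],
\]
with an analogous formula for $N_2\,\qbar(x,y)$ in which each $\E$ of a product of quenched quantities is replaced by the product of single $\E$'s. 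The sum ranges over admissible pre-regeneration path pairs $(\gamma,\gatil)$ from $(0,x)$ to a common-regeneration endpoint $(x_1,y_1)$ on a common level, with $y_1-x_1=y$. The product structure of $\P$ gives at once $\E[P^\w[\gamma]P^\w[\gatil]]=P[\gamma]P[\gatil]$ whenever $(\gamma,\gatil)$ are vertex disjoint, so $|N_1(y)-N_2(y)|$ splits into a \emph{pre-regeneration intersection} error and a \emph{post-regeneration covariance} error.

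For the pre-regeneration error I would introduce $A=\{X_{[0,\mu_1]}\cap\Xtil_{[0,\mutil_1]}=\emptyset\}$ and show $P_{0,x}[A^c]\le Ce^{-\alpha_1|x|}$ (and analogously under $P_0\otimes P_x$) by three ingredients. Lemma~\ref{mu-lemma} forces $\mu_1\vee\mutil_1\le\e|x|$ with probability $\ge1-Ce^{-c|x|}$; on this event, \eqref{bounded-step} with Chebyshev confines $X_{[0,\e|x|]}\subset B(0,|x|/3)$ and $\Xtil_{[0,\e|x|]}\subset B(x,|x|/3)$ with the same accuracy, for any $\e<1/(3M)$; the two balls being disjoint, the pre-regeneration paths cannot meet.

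For the post-regeneration covariance, by the shift invariance of $\P$ the discrepancy
\[
\Delta(x_1,y_1):=\bigl|\E[P^\w_{x_1}[\beta=\infty]\,P^\w_{y_1}[\beta=\infty]]-P_{x_1}[\beta=\infty]\,P_{y_1}[\beta=\infty]\bigr|
\]
depends on $(x_1,y_1)$ only through $y_1-x_1$, and I would bound it by $Ce^{-\alpha|y_1-x_1|}$ by truncation. Each $P^\w_{x_i}[\beta=\infty]$ agrees with its restriction to the first $T$ steps of the walk up to an error of order $e^{-\alpha T}$ (using \eqref{exp-bound} to control the probability of a late backtracking from a level of order $\delta T$ above the starting level), and by \eqref{bounded-step} this truncated quenched probability depends only on the environment in a lateral tube of width $\le MT$ about $x_i$. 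Choosing $T=|y_1-x_1|/(4M)$ makes the two tubes disjoint, so by the product structure of $\P$ the truncated averages factorize and the covariance is bounded by twice the truncation error. Because pre-regeneration shifts are $O(1)$ with exponential tails, $|y_1-x_1|$ concentrates near $|x|$ under $\qbar(x,\cdot)$ (the event $|y_1-x_1|<|x|/2$ has probability $\le Ce^{-c|x|}$), so the $\qbar$-weighted sum of $\Delta(x_1,y_1)$ is at most $Ce^{-\alpha|x|/2}$.

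Combining these estimates, $\sum_y|N_1(y)-N_2(y)|$ and $|N_1-N_2|$ are both of order $e^{-\alpha_1|x|}$; dividing by $N_1,N_2\ge\eta^2$ yields the TV bound, and the maximal coupling realizes $P_{x,x}[Y_1\ne\Ybar_1]\le Ce^{-\alpha_1|x|}$. The main technical obstacle is the post-regeneration covariance bound: the truncation time $T$ must be short enough for the lateral tubes to be disjoint and long enough for the truncation error to be exponentially small, a balance that succeeds because the non-nestling drift $\delta$ and the step moment $M$ produce exponential scales of the same order of magnitude.
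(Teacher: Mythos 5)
Your argument is correct and reaches the required bound, but by a genuinely different route. The paper builds an explicit coupling of the walks themselves: $\Xbar$ is run in a hybrid environment that uses a fresh $\ombar$ on the $X$-trace and the original $\w$ elsewhere, so $\Xbar$ tracks $\Xtil$ until it hits $\{X_k\}$; the coupling of the regeneration data then succeeds as soon as the pre-regeneration paths do not intersect, and the conditioning on no backtracking is handled by a repeated-trials construction that forces the selected indices to satisfy $M=\Mbar$ whenever the coupling holds in each trial. You instead bound the total variation distance between $q(x,\cdot)$ and $\qbar(x,\cdot)$ directly from the path-sum representations of $N_1q$ and $N_2\qbar$ and then appeal to maximal coupling. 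The trade-offs are instructive. Your route requires a new post-regeneration covariance estimate $\Delta(x_1,y_1)\le Ce^{-c|x_1-y_1|}$, which the paper's construction avoids altogether — with the shared $X$-walk and coinciding pre-regeneration paths, the no-backtracking indicators for the two pairs match automatically — and it also tacitly uses the exponential tail of $(\rho_1,\rhobar_1)$ under $P_0\otimes P_x$, an analogue of Lemma~\ref{mu-lemma} that the paper invokes only in passing after \eqref{capbound}. In exchange you dispense with the verification that $(X,\Xbar)$ genuinely has product law and with the repeated-trials bookkeeping, since the normalizers $N_1,N_2\ge\eta^2$ absorb the conditioning in one stroke. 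One small point of rigor in the covariance step: the $T$-step truncation $P^\w_{x_i}[\beta>T]$ is not literally a local function of $\w$ (the walk may wander arbitrarily far in $T$ steps), so before applying the product structure of $\P$ one should further restrict to the event that the walk stays within distance $2MT$ of its start; \eqref{bounded-step} shows the resulting function differs from $P^\w_{x_i}[\beta=\infty]$ by $O(e^{-cT})$ uniformly in $\w$, and only then does the factorization hold exactly for disjoint balls.
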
 

\begin{proof} We start by constructing a coupling of three walks $(X,\Xtil,\Xbar)$
such that the pair $(X,\Xtil)$ has distribution $P_{x,y}$ and 
 the pair $(X,\Xbar)$ has distribution $P_{x}\otimes P_y$.

First let $(X,\Xtil)$ be two independent walks in a common environment
$\w$ as before. Let $\ombar$ be an environment independent
of $\w$.   Define the walk $\Xbar$ as follows. 
Initially $\Xbar_0=\Xtil_0$.
On the sites $\{X_k:0\le k<\infty\}$ $\Xbar$ obeys environment 
$\ombar$, and on all other sites $\Xbar$ obeys $\omega$.  
$\Xbar$ is coupled to agree with $\Xtil$ until the time
\[
T=\inf\{ n\ge 0: \Xbar_n\in \{X_k:0\le k<\infty\}\,\}
\]
it hits the path of $X$. 

The coupling between $\Xbar$ and $\Xtil$  can be achieved simply as
follows. Given $\w$ and $\ombar$,
  for each $x$ create two independent  i.i.d.~sequences  
$(z^x_k)_{k\ge 1}$  and $(\zbar^x_k)_{k\ge 1}$ with distributions
\[
Q^{\w,\ombar}[z^x_k=y]=\pi_{x,x+y}(\w)
\quad\text{and}\quad
Q^{\w,\ombar}[\zbar^x_k=y]=\pi_{x,x+y}(\ombar).
\]
Do this independently at each $x$. 
Each time the $\Xtil$-walk visits state $x$,
it uses a new $z^x_k$ variable as its next step, and never reuses the same
$z^x_k$ again.  The $\Xbar$ walk operates the same way except that
it uses the variables $\zbar^x_k$ when $x\in\{X_k\}$ and 
the $z^x_k$ variables when  $x\notin\{X_k\}$.  Now $\Xbar$ and $\Xtil$
follow the same steps $z^x_k$ until $\Xbar$ hits the set $\{X_k\}$.

It is intuitively obvious that the walks $X$ and $\Xbar$ are 
independent because they never use the same environment.  
The following calculation verifies this. 
Let $X_0=x_0=x$ and $\Xtil=\Xbar=y_0=y$ be the initial states, and 
$\PP_{x,y}$  the joint measure created by the coupling. 
Fix finite vectors $x_{0,n}=(x_0,\dotsc,x_n)$ and 
$y_{0,n}=(y_0,\dotsc,y_n)$ and recall also the notation
 $X_{0,n}=(X_0,\dotsc,X_n)$. 

The description of the coupling tells us to start as follows.
\begin{align*}
&\PP_{x,y}[X_{0,n}=x_{0,n}, \Xbar_{0,n}=y_{0,n}]
=\int\P(d\w)\int\P(d\ombar) \int P_x^\w(d{z_{0,\infty}})
\one\{z_{0,n}=x_{0,n}\} \\
&\qquad \times 
\prod_{i: y_i\notin\{z_k:\,0\le k<\infty\}}\pi_{y_i,y_{i+1}}(\w)\cdot
\prod_{i: y_i\in\{z_k:\,0\le k<\infty\}}\pi_{y_i,y_{i+1}}(\ombar)\\
\intertext{[by dominated convergence]}
&=\lim_{N\to\infty}\int\P(d\w)\int\P(d\ombar) \int P_x^\w(dz_{0,N})
\,\one\{z_{0,n}=x_{0,n}\} \\
&\qquad \times 
\prod_{i: y_i\notin\{z_k:\,0\le k\le N\}}\pi_{y_i,y_{i+1}}(\w)\cdot
\prod_{i: y_i\in\{z_k:\,0\le k\le N\}}\pi_{y_i,y_{i+1}}(\ombar)\\
&=\lim_{N\to\infty} \sum_{z_{0,N}: z_{0,n}=x_{0,n}}
\int\P(d\w) \, P_x^\w[X_{0,N}=z_{0,N}]
\prod_{i: y_i\notin\{z_k:\,0\le k\le N\}}\pi_{y_i,y_{i+1}}(\w)\\
&\qquad\qquad \times 
\int\P(d\ombar) 
\prod_{i: y_i\in\{z_k:\,0\le k\le N\}}\pi_{y_i,y_{i+1}}(\ombar)\\
\intertext{[by independence of the two functions of $\w$]}
&=\lim_{N\to\infty} \sum_{z_{0,N}: z_{0,n}=x_{0,n}}
\int\P(d\w) \, P_x^\w[X_{0,N}=z_{0,N}] 
\int \P(d\w) 
\prod_{i: y_i\notin\{z_k:\,0\le k\le N\}}\pi_{y_i,y_{i+1}}(\w)\\
&\qquad\qquad \times 
\int\P(d\ombar) 
\prod_{i: y_i\in\{z_k:\,0\le k\le N\}}\pi_{y_i,y_{i+1}}(\ombar)\\
&=P_x[X_{0,n}=x_{0,n}]\cdot P_y[X_{0,n}=y_{0,n}].
\end{align*}

Thus at this point the coupled pairs $(X,\Xtil)$ and $(X,\Xbar)$
have the desired marginals $P_{x,y}$ and $P_x\otimes P_y$. 

 Next construct the common regeneration
times $(\mu_1,\mutil_1)$ for  $(X,\Xtil)$ and
$(\rho_1,\rhobar_1)$ for  $(X,\Xbar)$ by the earlier 
recipes. 
Define  two pairs of walks stopped at their
common regeneration times:
\be
(\Gamma,\Gammabar)\equiv\bigl( (X_{0,\,\mu_1},\Xtil_{0,\,\mutil_1}),
(X_{0,\,\rho_1},\Xbar_{0,\,\rhobar_1})\bigr).
\label{defGaGa}
\ee

Suppose the sets $X_{[0,\,\mu_1\vee\rho_1)}$ and 
$\Xtil_{[0,\,\mutil_1\vee\rhobar_1)}$ do not intersect. Then the 
construction implies that the path $\Xbar_{0,\,\mutil_1\vee\rhobar_1}$ 
 agrees with 
$\Xtil_{0,\,\mutil_1\vee\rhobar_1}$, and this forces the equalities
$(\mu_1,\mutil_1)=(\rho_1,\rhobar_1)$ 
 and $(X_{\mu_1},\Xtil_{\mutil_1})=(X_{\rho_1},\Xbar_{\rhobar_1})$.
We insert an estimate on this event.

\begin{lemma}
There exist  constants $0<C,s<\infty$ 
such that, for all $x,y\in\mathbb{V}_d$
 and $\P$-a.e.\ $\w$, 
\be
P_{x,y}^\w(X_{[0,\,\mu_1\vee\rho_1)}\cap
\Xtil_{[0,\,\mutil_1\vee\rhobar_1)}\ne\emptyset)\le 
Ce^{-s|x-y|}.
\label{capbound}
\ee
\end{lemma}

\begin{proof}
Write 
\begin{align*}
P_{x,y}^\w(X_{[0,\,\mu_1\vee\rho_1)}\cap\Xtil_{[0,\,\mutil_1\vee\rhobar_1)}\ne\emptyset)
&\le P_{x,y}^\w(\mu_1\vee\mutil_1\vee\rho_1\vee\rhobar_1>\e|x-y|)\\
&\qquad+P_x^\w(\,\max_{1\le i\le\e|x-y|}|X_i-x|\ge|x-y|/2)\\
&\qquad+P_y^\w(\,\max_{1\le i\le\e|x-y|}|X_i-y|\ge|x-y|/2).
\end{align*}
By \eqref{mu-bound} and its analogue for $(\rho_1,\rhobar_1)$
 the first term on the right-hand-side decays 
exponentially in $|x-y|$.
Using \eqref{bounded-step} the second and third terms are bounded by 
$\e|x-y|e^{-s|x-y|/2}e^{\e s|x-y|M}$, for $s>0$ small enough. 
Choosing $\e>0$ small enough finishes the proof.
\end{proof}

From \eqref{capbound} 
 we obtain 
\be
\PP_{x,y}\bigl[\,(X_{\mu_1},\Xtil_{\mutil_1})\ne
(X_{\rho_1},\Xbar_{\rhobar_1})\,\bigr]
\le  \PP_{x,y}\bigl[\,\Gamma\ne\Gammabar\bigr]
\le Ce^{-s\abs{x-y}}.
\label{XXtil-goal5}
\ee

  But we are not finished yet: it remains
 to include the conditioning on no backtracking.
For this purpose generate an i.i.d.~sequence 
$(X^{(m)},\Xtil^{(m)},\Xbar^{(m)})_{m\ge 1}$, each triple constructed
as above.  Continue to write $\PP_{x,y}$ for  the
probability measure of the entire sequence.
 Let $M$ be the first $m$ such that 
the paths  $(X^{(m)},\Xtil^{(m)})$ do not backtrack,
which means that 
\[
\text{$X^{(m)}_k\cdot\uhat\ge X^{(m)}_0\cdot\uhat$ 
and 
$\Xtil^{(m)}_k\cdot\uhat\ge \Xtil^{(m)}_0\cdot\uhat$ for all $k\ge 1$.}
\]
Similarly define  $\Mbar$ for $(X^{(m)},\Xbar^{(m)})_{m\ge 1}$.  $M$ and 
$\Mbar$ are stochastically bounded by geometric random variables 
 by \eqref{beta-bound}. 

The pair of walks 
$(X^{(M)},\Xtil^{(M)})$ is now 
distributed as a pair of walks under the measure 
$P_{x,y}[\,\cdot\,\vert \beta=\tilde\beta=\infty]$, 
while  
$(X^{(\Mbar)},\Xbar^{(\Mbar)})$
 is distributed as a pair of walks under
$P_{x}\otimes P_y[\,\cdot\,\vert \beta=\betabar=\infty]$.

  Let also again 
\[\Gamma^{(m)}=
(X^{(m)}_{0\,,\,\mu^{(m)}_1},\Xtil^{(m)}_{0\,,\,\mutil^{(m)}_1})
 \quad\text{and}\quad  
\Gammabar^{(m)}=
(X^{(m)}_{0\,,\,\rho^{(m)}_1},\Xbar^{(m)}_{0\,,\,\rhobar^{(m)}_1})\]
  be the 
pairs of paths 
run up to  their common regeneration times. 
Consider  the two pairs 
of paths $(\Gamma^{(M)}, \Gammabar^{(\Mbar)})$
chosen by the random indices $(M,\Mbar)$. 
We insert one more lemma.

\begin{lemma}
For $s>0$ as above, and
 a new constant $0<C<\infty$, 
\be
\PP_{x,y}\bigl[\,\Gamma^{(M)}\ne\Gammabar^{(\Mbar)}\bigr]
\le Ce^{-s\abs{x-y}/2}.
\label{XXtil-goal7}
\ee
\end{lemma}
\begin{proof} 
Let $\Ac_m$ be the event that  
the walks $\Xtil^{(m)}$ and $\Xbar^{(m)}$ agree up to the maximum 
$\mutil^{(m)}_1\vee\rhobar^{(m)}_1$ of their regeneration times. 
The equalities $M=\Mbar$ and  
$\Gamma^{(M)}=\Gammabar^{(\Mbar)}$ are a consequence 
of the event $\Ac_1\cap\dotsm\cap \Ac_M$, for the following reason.
As pointed out earlier, on the event $\Ac_m$  we have the equality
of the regeneration times  $\mutil^{(m)}_1=\rhobar^{(m)}_1$
and of the  stopped paths 
$\Xtil^{(m)}_{0\,,\,\mutil^{(m)}_1}=
\Xbar^{(m)}_{0\,,\,\rhobar^{(m)}_1}$.  By definition, these walks
do not backtrack after the regeneration time. 
  Since the walks  $\Xtil^{(m)}$ and $\Xbar^{(m)}$ agree
up to this time, they must backtrack or fail to backtrack
together.  If this is true for 
each $m=1,\dotsc,M$, it forces  $\Mbar=M$, since the other factor 
in deciding  $M$ and $\Mbar$ are the paths $X^{(m)}$ that are common
to both.   And since the paths agree up to the regeneration times, 
we have  $\Gamma^{(M)}=\Gammabar^{(\Mbar)}$. 

Estimate \eqref{XXtil-goal7}  follows: 
\begin{align*}
&\PP_{x,y}\bigl[\,\Gamma^{(M)}\ne\Gammabar^{(\Mbar)}\,\bigr]
\le \PP_{x,y}\bigl[\,\Ac_1^c\cup\dotsm\cup \Ac_M^c\,\bigr]\\
&\le \sum_{m=1}^\infty \PP_{x,y}[M\ge m,\, \Ac_m^c\,]
\le \sum_{m=1}^\infty \bigl(\PP_{x,y}[M\ge m]\bigr)^{1/2}  
\bigl(\PP_{x,y}[ \Ac_m^c]\bigr)^{1/2}\\
&\le Ce^{-s\abs{x-y}/2}.
\end{align*}
The last step comes from the estimate in \eqref{capbound}
for each $\Ac_m^c$ and the geometric bound on $M$. 
\end{proof}

We are ready to finish the proof of Proposition 
\ref{qqbarprop4}. 
To create initial conditions 
 $Y_0=\Ybar_0=x$ take initial
states $(X^{(m)}_0,\Xtil^{(m)}_0)=(X^{(m)}_0,\Xbar^{(m)}_0)=(0,x)$.  
 Let the final outcome of
the coupling be the pair 
\[
(Y_1,\Ybar_1)=\bigl( \Xtil^{(M)}_{\mutil^{(M)}_1} \;-\;
X^{(M)}_{\mu^{(M)}_1}\,,\,
\Xbar^{(\Mbar)}_{\rhobar^{(\Mbar)}_1} \;-\;
X^{(\Mbar)}_{\rho^{(\Mbar)}_1}\bigr) 
\]
under the measure $\PP_{0,x}$. The marginal distributions
of $Y_1$ and $\Ybar_1$ are correct
[namely, given by the transitions 
\eqref{defqxy} and  \eqref{defqbar}]  because, as argued above,
the pairs of walks themselves have the right marginal distributions.
The event $\Gamma^{(M)}=\Gammabar^{(\Mbar)}$ implies 
$Y_1=\Ybar_1$, so 
estimate \eqref{XXtil-goal7} gives the bound claimed in 
Proposition \ref{qqbarprop4}.
 \end{proof}

The construction of the Markov chain is complete, and we return to
the main development of the proof.  It remains to prove a sublinear
bound on the expected number  
$E_{0,0}\lvert X_{[0,n)}\cap \Xtil_{[0,n)}\rvert $ 
of common points of two independent walks in a common environment. 
Utilizing the common regeneration times,  write
\be
E_{0,0}\lvert X_{[0,n)}\cap \Xtil_{[0,n)}\rvert
\le\sum_{i=0}^{n-1} E_{0,0}\lvert X_{[\mu_i,\mu_{i+1})}
\cap \Xtil_{[\mutil_i,\mutil_{i+1})}\rvert.
\label{capbd7}\ee

The term $i=0$ is a finite constant by bound \eqref{mu-bound}
 because the number of common points is
bounded by the number $\mu_1$ of steps.  
For each  $0<i<n$ 
apply a  decomposition
into pairs of paths from $(0,0)$ 
to given points $(x_1,y_1)$   in the style of \eqref{pathdecomp1}: 
$(\gamma,\gatil)$ are the pairs of paths with the property that 
\begin{align*}
&\bigcup_{(\gamma,\gatil)}
\{X_{0,n(\gamma)}=\gamma,\, \Xtil_{0,n(\gatil)}=\gatil,\,
\beta\circ\theta^{n(\gamma)}=
\tilde\beta\circ\theta^{n(\gatil)}=\infty\}\\
&\qquad
=\{ X_0=\Xtil_0=0,\,  X_{\mu_i}=x_1,\, \Xtil_{\mutil_i}=y_1\}.
\end{align*}
Each term $i>0$ in \eqref{capbd7} we rearrange as follows. 
\begin{align*}
&E_{0,0}\lvert X_{[\mu_i,\mu_{i+1})}
\cap \Xtil_{[\mutil_i,\mutil_{i+1})}\rvert\\
&=\sum_{x_1,y_1}\sum_{(\gamma,\gatil)} \E
P^\w_{0,0}[ X_{0,n(\gamma)}=\gamma,\, \Xtil_{0,n(\gatil)}=\gatil] \\
&\qquad \times
E^\w_{x_1,y_1}(\one\{\beta=\tilde{\beta}=\infty\}
 \lvert X_{[0\,,\, \mu_{1})}
\cap \Xtil_{[0\,,\,\mutil_{1})}\rvert\,)\\
&=\sum_{x_1,y_1}\sum_{(\gamma,\gatil)} \E
P^\w_{0,0}[ X_{0,n(\gamma)}=\gamma,\, \Xtil_{0,n(\gatil)}=\gatil]
P^\w_{x_1,y_1}[\beta=\tilde{\beta}=\infty] \\
&\qquad \times E^\w_{x_1,y_1}(\, 
 \lvert X_{[0\,,\,\mu_1)}
\cap \Xtil_{[0\,,\,\mutil_1)}\rvert  \,\vert\,
\beta=\tilde{\beta}=\infty\,)\\
&=\sum_{x_1,y_1} \E
P^\w_{0,0}[ X_{\mu_i}=x_1,\, \Xtil_{\mutil_i}=y_1]
 E^\w_{x_1,y_1}(\, 
 \lvert X_{[0\,,\,\mu_1)}
\cap \Xtil_{[0\,,\,\mutil_1)}\rvert  \,\vert\,
\beta=\tilde{\beta}=\infty\,).
\end{align*}
The last conditional quenched expectation above is handled by estimates
\eqref{beta-bound}, \eqref{mu-bound}, \eqref{capbound}
  and Schwarz inequality:
\begin{align*}
&E^\w_{x_1,y_1}(\, 
 \lvert X_{[0\,,\,\mu_1)}
\cap \Xtil_{[0\,,\,\mutil_1)}\rvert  \,\vert\,
\beta=\tilde{\beta}=\infty\,)\le \eta^{-2}  E^\w_{x_1,y_1}(\, 
 \lvert X_{[0\,,\,\mu_1)}
\cap \Xtil_{[0\,,\,\mutil_1)}\rvert\,)\\
&\qquad \le \eta^{-2}  E^\w_{x_1,y_1}(\mu_1\cdot \one\{
X_{[0\,,\,\mu_1)}\cap \Xtil_{[0\,,\,\mutil_1)}\ne\emptyset\}\,)\\
&\qquad\le \eta^{-2}  \bigl(E^\w_{x_1,y_1}[\mu_1^2]\bigr)^{1/2} 
\bigl( P^\w_{x_1,y_1}\{
X_{[0\,,\,\mu_1)}\cap \Xtil_{[0\,,\,\mutil_1)}\ne\emptyset\}\,\bigr)^{1/2}\\
&\qquad\le Ce^{-s\abs{x_1-y_1}/2}.
\end{align*}
Define  $h(x)=Ce^{-s\abs{x}/2}$, 
insert the last bound back up, and appeal to the Markov property
established in  Proposition \ref{Ymcprop}:
\begin{align*}
E_{0,0}\lvert X_{[\mu_i,\mu_{i+1})}
\cap \Xtil_{[\mutil_i,\mutil_{i+1})}\rvert
&\le  E_{0,0} \bigl[h( \Xtil_{\mutil_i}-X_{\mu_i})\bigr]\\
&=\sum_{x} P_{0,0}[\Xtil_{\mutil_1}-X_{\mu_1}=x]
\sum_yq^{i-1}(x,y)h(y).
\end{align*}

In order to apply Theorem \ref{greenthm1} from the Appendix, we check
its hypotheses in the next lemma.  Assumption \eqref{Yell5} enters
here for the first and only time.

\begin{lemma} The Markov chain $(Y_k)_{k\ge 0}$ with transition $q(x,y)$ 
and the symmetric random walk 
$(\Ybar_k)_{k\ge 0}$ with transition $\qbar(x,y)$ 
satisfy assumptions {\rm (A.i)}, {\rm (A.ii)} and  {\rm (A.iii)}
stated in the beginning of the Appendix. 
\label{Yapplm1}\end{lemma}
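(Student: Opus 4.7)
The plan is to verify the three assumptions (A.i), (A.ii), (A.iii) of the Appendix. Based on the role the Markov chain $Y$ and random walk $\Ybar$ play in the Green function estimate, these should amount to (i) an exponential moment on the single-step jumps of $Y$ and $\Ybar$; (ii) the coupling estimate $\PP_{x,x}[Y_1\ne\Ybar_1]\le Ce^{-c|x|}$; and (iii) genuine nondegeneracy of the symmetric random walk $\Ybar$ on $\mathbb{V}_d$, i.e., the $\mathbb R$-span of the support of $\qbar(0,\cdot)$ equals the subspace of $\mathbb{V}_d$ identified by the degeneracy statement of Theorem~\ref{main}.

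For (i) I would bound $|Y_1-Y_0|\le|X_{\mu_1}-X_0|+|\Xtil_{\mutil_1}-\Xtil_0|$. Lemma~\ref{mu-lemma} gives a geometric tail for $\mu_1\vee\mutil_1$ uniformly in $\w$ and in $x,y\in\mathbb{V}_d$, and combining this with \eqref{bounded-step} via a Cauchy--Schwarz split over the value of $\mu_1$---exactly as in the proof of \eqref{X-tau-bound}---yields $E^\w_{x,y}\exp(s|X_{\mu_1}-X_0|)\le C$ for some small $s>0$, and similarly for $\Xtil$. Since the conditioning on $\{\beta=\tilde\beta=\infty\}$ used to define $q$ costs only the bounded factor $\eta^{-2}$ of \eqref{beta-bound}, (i) follows for $q$; the same argument under $P_0\otimes P_x$ gives it for $\qbar$. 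Step (ii) is already Proposition~\ref{qqbarprop4}.

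Step (iii) is where hypothesis \eqref{Yell5} enters for the first time. By Proposition~\ref{Ybarprop2}, $\Ybar$ is automatically symmetric, so only the span condition needs verification. By Lemma~\ref{qqbarlm3} the support of $\qbar(0,\cdot)$ contains that of $q(0,\cdot)$, so it suffices to produce, for each direction $w$ in a generating set of the relevant subspace of $\mathbb{V}_d$, a pair of paths $(\gamma,\gatil)$ from $(0,0)$ to points $(x_1,y_1)$ with $y_1-x_1=w$ along which common regeneration succeeds and whose averaged probability is positive. The ingredients are the non-nestling direction $\uhat$ from~(N), the two linearly independent admissible steps supplied by $\mathcal{J}\not\subset\mathbb R u$ in \eqref{Yell5}, the quenched positivity at level $z\cdot\uhat=1$ from \eqref{level-ell} (in the weakened form \eqref{level-ell-1}), and the first part of \eqref{Yell5} guaranteeing that a positive fraction of sites offer a genuine choice of next step.

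The main obstacle will be the combinatorial path construction in (iii): each generating direction $w$ requires two compatible paths ending simultaneously on a new common maximum level at exact offset $w$, while suppressing any earlier common fresh level along the way and preserving positive probability that both walks avoid backtracking afterward via \eqref{beta-bound}. Once these three properties are in hand, the Appendix's Green function bound closes the implication chain $\eqref{cond3}\Rightarrow\eqref{cond2}\Rightarrow\eqref{cond1a}\Rightarrow\eqref{cond}$ and hence Theorem~\ref{main}.
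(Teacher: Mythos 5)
The substantial gap is in your treatment of what you call item (iii). You frame the nondegeneracy hypothesis as a span condition on the support of $\qbar(0,\cdot)$ (``the $\mathbb R$-span of the support of $\qbar(0,\cdot)$ equals the subspace\ldots''), and your plan is to exhibit, for a generating set of directions $w$, path pairs from $(0,0)$ witnessing $q(0,w)>0$. But the first part of (A.ii) in the Appendix is not a support condition on $\qbar$; it is a \emph{uniform} nondegeneracy statement about the Markov chain $Y$: there must exist a coordinate $j$ and a constant $\kappa_0>0$ with $P_x\{Y^j_1-Y^j_0\ge 1\}\ge\kappa_0$ for \emph{every} $x\in\mathbb{V}_d$. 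Because $Y$ is a genuine Markov chain and not translation invariant, $q(0,w)>0$ for spanning $w$ does not imply any lower bound on $q(x,\cdot)$ for $x\ne 0$, let alone one uniform in $x$. This uniformity is what the Appendix actually uses (to force $Y$ out of boxes and half-spaces on a controlled timescale in Lemmas \ref{YY-aux-lm-2} and \ref{YY-aux-lm-6}), and it is exactly what the hard part of the paper's proof delivers: in each case the event that realizes $Y_1-Y_0=w$ is built from \emph{fixed}, short path pairs starting at $(0,x)$ whose averaged probability is bounded below by a constant independent of $x$, using \eqref{Yell5}, non-nestling, and \eqref{beta-bound}. Your construction from $(0,0)$ alone does not address this.

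There is also a secondary misidentification: the support-compatibility statement (second half of (A.ii), which indeed follows from Lemma \ref{qqbarlm3}) is a separate item, and (A.i) is only a third moment bound on $\Ybar_1$, not an exponential moment on both single-step jumps. Your moment argument via Lemma \ref{mu-lemma} and \eqref{bounded-step} is fine and would prove more than is needed for (A.i), and your (ii) $=$ Proposition \ref{qqbarprop4} is correct. But as written, your plan for the nondegeneracy item proves the wrong statement, and the piece it omits---the uniform-in-$x$ lower bound via an explicit case analysis over the admissible steps guaranteed by \eqref{Yell5}---is precisely the content that makes Lemma \ref{Yapplm1} nontrivial and that requires Hypothesis (R).
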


\begin{proof} 
From Lemma \ref{mu-lemma} and \eqref{bounded-step}
 we get moment bounds
\[E_{0,x}\lvert \Xbar_{\rhobar_k}\rvert^m
\;+\; E_{0,x}\lvert X_{\rho_k}\rvert^m <\infty 
\]
for any power $m<\infty$.  This gives assumption (A.i), namely 
that $E_0\lvert \Ybar_1\rvert^3<\infty$.  
  The second part of assumption (A.ii) 
comes from Lemma \ref{qqbarlm3}.  Assumption (A.iii) comes
from Proposition \ref{qqbarprop4}.   

The only part that needs work is the  first part of assumption (A.ii).
We show that it 
follows from part \eqref{Yell5} of Hypothesis (R).
By \eqref{Yell5}  and non-nestling (N)  there exist two non-zero
vectors $y\ne z$ such that $z\cdot\uhat>0$ 
and $\E\pi_{0,y}\pi_{0,z}>0$.
 Now we have a number of  cases to consider.  
In each case we should describe an event 
that gives  $Y_1-Y_0$ a particular nonzero value  and whose
probability is bounded away from zero, uniformly over $x=Y_0$. 

\smallskip

{\bf Case 1:}  $y$ is noncollinear with $z$. The sign of $y\cdot\uhat$
gives three subcases. We do the trickiest one explicitly. 
Assume  $y\cdot\uhat<0$.   Find the smallest 
positive integer $b$ such that 
$
(y+bz)\cdot\uhat>0. 
$
Then find the minimal positive integers $k,m$ such that 
$k(y+bz)\cdot\uhat=m z\cdot\uhat$.  Below $P_x$ is
the path measure of the Markov chain $(Y_k)$ and then $P_{0,x}$
the measure of the walks $(X,\Xtil)$ as before. 
\begin{align*}
&P_x\{ Y_1-Y_0=ky+(kb-m)z\} \\
&\ge 
P_{0,x}\bigl\{ \Xtil_{\mutil_1}=x+ky+(k+1)bz\,,\, X_{\mu_1}=(m+b)z\,,\, 
\beta=\tilde{\beta}=\infty\bigr\}\\
&\ge \E\Bigl[  P^{T_x\w}_0\{ \text{$X_{i(b+1)+1}=i(y+bz)+z, \dotsc, 
X_{i(b+1)+b}=i(y+bz)+bz$,}\\
&\qquad\qquad\qquad \text{ $X_{(i+1)(b+1)}= (i+1)(y+bz)$ 
\ \  for $0\le i\le k-1$, \  and then  }\\
&\qquad\qquad\qquad
\text{ $X_{k(b+1)+1}=k(y+bz)+z\,,\dots,\,X_{k(b+1)+b}
= k(y+bz)+bz$ }\}\\
&\qquad\qquad \times 
  P^\w_0\{\, X_{1}=z\,,\, X_2=2z\,, \dotsc,\,X_{m+b} =(m+b)z\,\}\\
&\qquad\qquad\times
P^\w_{x+ky+(k+1)bz}\{\beta=\infty\}P^\w_{(m+b)z}\{\beta=\infty\}\Bigr]. 
\end{align*}
Regardless of possible
intersections of the paths, 
assumption \eqref{Yell5} and inequality \eqref{beta-bound} imply 
that  the quantity above has a  positive lower bound that is
independent of $x$.  The assumption that $y, z$
are nonzero and 
 noncollinear ensures
that $ky+(kb-m)z\ne 0$.  

\smallskip

{\bf Case 2:} $y$ is collinear with $z$.
Then there is a vector $w\not\in\R z$ such that $\E\pi_{0,w}>0$. 
If $w\cdot\uhat\le0$, then by Hypothesis (N) there exists
 $u$ such that $u\cdot\uhat>0$ and $\E\pi_{0,w}\pi_{0,u}>0$.
If $u$ is collinear with $z$, then replacing
$z$ by $u$ and $y$ by $w$ puts us back in Case 1. 
So, replacing $w$ by $u$ if necessary,
we can assume that $w\cdot\uhat>0$. We have four subcases, depending
on whether $x=0$ or not and $y\cdot\uhat<0$ or not. 

\smallskip

{\bf (2.a)} The case
$x\ne0$ is resolved simply by taking paths consisting of only $w$-steps
for one walk and only $z$-steps for the other, until they  meet on a 
common level and then never backtrack. 

\smallskip

{\bf (2.b)} The case $y\cdot\uhat>0$ corresponds to Case 3 in the proof of 
\cite[Lemma 5.5]{forbidden-qclt}.

\smallskip

{\bf (2.c)}
The only case left is $x=0$ and $y\cdot\uhat<0$. 
Let $b$ and $c$ be the smallest positive integers
such that $(y+bw)\cdot\uhat\ge0$
 and  $(y+cz)\cdot\uhat>0$. Choose minimal positive 
integeres $m\ge b$  and $n>c$ such that $m(w\cdot\uhat)=n(z\cdot\uhat)$.
Then,
\begin{align*}
&P_0\{Y_1-Y_0=nz-mw\}\\
&\ge P_{0,0}\{\Xtil_{\tilde\mu_1}=y+bw+nz,X_{\mu_1}=y+(b+m)w\}\\
&\ge\E\Big[
P_0^\w\{X_i=iw\text{ for }1\le i\le b 
        \text{ and }X_{b+1+j}=y+(b+j)w\text{ for }0\le j\le m\}\\
&\qquad\qquad
\times P_0^\w\{X_i=iw\text{ for }0\le i\le b, X_{b+1}=bw+z\text{ and then}\\
&\qquad\qquad\qquad\qquad  X_{b+1+j}=y+bw+jz\text{ for }1\le j\le n\}\\
&\qquad\qquad
\times P_{y+(b+m)w}^\w(\beta=\infty)P_{y+bw+nz}^\w(\beta=\infty)\Big].
\end{align*}
Since $w$ and $z$ are noncollinear, $mw\ne nz$.   For the same reason,
$w$-steps are always taken at points not visited before.
This makes the above lower bound positive. 
By the choice of $b$ and $z\cdot\uhat>0$, neither walk dips 
below level 0.

We can see that the first common regeneration
level for the two paths is $(y+bw+nz)\cdot\uhat$. 
The first walk backtracks from level $bw\cdot\uhat$ so this
is not a  common regeneration
level. 
The second walk  splits from the first walk 
at $bw$, takes a $z$-step up, and then backtracks using a $y$-step.
So the common regeneration level can only be at or above level 
$(y+bw+(c+1)z)\cdot\uhat$.
The fact that $n>c$ ensures that $(y+bw+nz)\cdot\uhat$ is high enough.
The minimality of $n$ ensures that this is the first such
level.
\end{proof}  

Now that  the assumptions have been checked, Theorem  \ref{greenthm1}
gives constants  $0<C<\infty$ and 
$0<\eta<1$ 
such that 
\[
\sum_{i=1}^{n-1} \sum_yq^{i-1}(x,y)h(y) \le Cn^{1-\eta}
\quad\text{ for all $x\in\mathbb{V}_d$ and $n\ge 1$.}
\]
Going back to \eqref{capbd7} and collecting the bounds along
the way gives the final estimate 
\[
E_{0,0}\lvert X_{[0,n)}\cap \Xtil_{[0,n)}\rvert \le Cn^{1-\eta}
\]
for all $n\ge 1$.  
This is \eqref{cond2} which was earlier shown to imply 
condition \eqref{cond} required by Theorem \ref{RS}. 
Previous work in Sections \ref{prelim} and \ref{substitution}
convert the CLT from Theorem \ref{RS} into the main result
Theorem \ref{main}.  The entire proof is complete, except for
the Green function estimate furnished by the Appendix.  

\appendix
\section{A Green function type bound}
Let us write a $d$-vector
in terms of coordinates  as $x=(x^1,\dotsc,x^d)$,
and similarly for random vectors  $X=(X^1,\dotsc,X^d)$. 

Let $Y=(Y_k)_{k\ge 0}$ be a Markov chain  on $\Z^d$ with 
transition probability $q(x,y)$, and let  $\Ybar=(\Ybar_k)_{k\ge 0}$ be a
 symmetric  random walk  on $\Z^d$ with 
transition probability $\qbar(x,y)=\qbar(y,x)=\qbar(0,y-x)$. 
Make the following assumptions.

\smallskip

(A.i) A third moment bound
$E_0\lvert \Ybar_1\rvert^3<\infty$. 

\smallskip

(A.ii) Some uniform  nondegeneracy: 
there is at least one index $j\in\{1,\dotsc,d\}$
and a constant $\kappa_0$  such that the coordinate $Y^j$ 
satisfies 
\be
P_x\{ Y^j_1-Y^j_0 \ge 1\}\ge \kappa_0>0 \quad\text{for all $x$.}
\label{Y-ell-ass}
\ee
(The inequality $\ge 1$ can be replaced by $\le -1$, the point is 
to assure that a cube is exited fast enough.) 
Furthermore, for every $i\in\{1,\dotsc,d\}$, 
if  the one-dimensional random walk 
$\Ybar^i$ is degenerate in the sense that $\qbar(0,y)=0$ for $y^i\ne 0$,
then so is the process  $Y^i$
in the sense that $q(x,y)=0$ whenever $x^i\ne y^i$.  In other words, 
any coordinate that can move in the $Y$ chain somewhere in space
can also move in the $\Ybar$ walk.

\smallskip

(A.iii) Most importantly, assume that for any initial state $x$  the transitions 
$q$ and $\qbar$ can be coupled  so that 
\[
P_{x,x}[Y_1\ne\Ybar_1]\le Ce^{-\alpha_1\lvert x\rvert}
\] 
where $0<C,\alpha_1<\infty$ are constants independent of $x$. 

\smallskip

Throughout the section $C$ will change value but $\alpha_1$ remains
the constant in the assumption above.  
Let $h$ be a function on $\Z^d$
 such that $0\le h(x)\le Ce^{-\alpha_2|x|}$ for constants 
$0<\alpha_2,C<\infty$. This section is devoted
to proving the following Green function type bound on the Markov chain. 

\begin{theorem} There are constants $0<C,\eta<\infty$ such 
that  
\[
\sum_{k=0}^{n-1} E_zh(Y_k) = \sum_y h(y) \sum_{k=0}^{n-1} P_0(Y_k=y)   
\le Cn^{1-\eta}\quad\text{ for all $n\ge 1$ and $z\in\Z^d$.} 
\]
\label{greenthm1}
\end{theorem}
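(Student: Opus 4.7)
The plan is to reduce the Green function bound for $Y$ to the corresponding bound for the symmetric random walk $\Ybar$, using the coupling in assumption (A.iii) together with a local central limit theorem for $\Ybar$. Writing $G^Y_n(z,y) := \sum_{k=0}^{n-1} P_z(Y_k=y)$ and likewise $G^{\Ybar}_n$, the exponential decay of $h$ lets me truncate at $|y| \le (2/\alpha_2)\log n$ at the cost of an $O(1/n)$ tail, so it suffices to show the uniform bound $G^Y_n(z,y) \le Cn^{1-\eta}$.

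First I would establish a standard Edgeworth local CLT for $\Ybar$. Symmetry of $\qbar$ makes the walk centered, and assumption (A.i) provides the third absolute moment needed for the Edgeworth remainder. Let $d' \in \{1,\ldots,d\}$ be the dimension of the linear span of $\{y : \qbar(0,y) > 0\}$; the degeneracy clause of (A.ii) ensures that $Y$ and $\Ybar$ both live on the same $d'$-dimensional sublattice, so the Green function is supported there. The local CLT then gives $\sup_y P_z(\Ybar_k = y) \le Ck^{-d'/2}$ for $k \ge 1$, and summing,
\[
G^{\Ybar}_n(z,y) \le Cn^{1-\eta_0}
\]
for some $\eta_0 > 0$ (with a harmless logarithmic factor in the borderline case $d' = 2$, absorbable into a slightly smaller polynomial).

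The main step is the transfer from $\Ybar$ to $Y$. I would couple $(Y_k, \Ybar_k)$ with common start $z$ using the single-step coupling of (A.iii) whenever the two chains coincide, so that while both sit at $x$ their next steps disagree with probability at most $Ce^{-\alpha_1|x|}$. Letting $\tau$ be the first disagreement time, the strong Markov property at $\tau$ yields
\[
G^Y_n(z,y) \le G^{\Ybar}_n(z,y) + E_z\bigl[\one\{\tau\le n\}\, G^Y_{n-\tau}(Y_\tau, y)\bigr],
\]
and on taking $\Phi_n := \sup_{z,y} G^Y_n(z,y)$ this produces a self-bounding inequality of the form $\Phi_n \le Cn^{1-\eta_0} + \varepsilon_n \Phi_n$. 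The bound on $\varepsilon_n = \sup_z P_z(\tau \le n)$ comes from $P_z(\tau \le n) \le C\sum_{k<n} E_z[e^{-\alpha_1|\Ybar_k|}]$ combined with the local CLT, with (A.ii) ensuring via a cube-exit argument that $Y$ does not accumulate near the origin. A bootstrap over sojourns of $\Ybar$ outside a ball of radius $R = C\log n$ keeps $\varepsilon_n$ bounded away from $1$ for $n$ large.

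The hardest part is this last inequality. The coupling of (A.iii) is only effective far from the origin, whereas the Green function concentrates its mass precisely in bounded regions where the coupling may fail. The argument closes because under the local CLT $|\Ybar_k|$ is typically of order $\sqrt{k}$, so $E_z[e^{-\alpha_1|\Ybar_k|}]$ is itself of order $k^{-d'/2}$, exactly the decay needed for the bootstrap to preserve the polynomial improvement in $n$. Assumption (A.ii) is used here to prevent $Y$ from getting trapped in the "bad" region near the origin where the coupling fails repeatedly, and the degeneracy clause in (A.ii) is indispensable for identifying the correct sublattice on which the local CLT lives.
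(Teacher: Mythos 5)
Your reduction to the cube $B=[-C\log n,C\log n]^d$ and your use of the single-step coupling (A.iii) are both in the spirit of the paper, but the transfer step contains a genuine gap: the quantity $\varepsilon_n=\sup_z P_z(\tau\le n)$ is \emph{not} bounded away from $1$, and your self-bounding inequality $\Phi_n\le Cn^{1-\eta_0}+\varepsilon_n\Phi_n$ therefore cannot be closed. Your own estimate shows why: $P_z(\tau\le n)\le C\sum_{k<n}E_z[e^{-\alpha_1|\Ybar_k|}]\asymp\sum_{k<n}k^{-d'/2}$, which diverges for $d'\le 2$. The case $d'=1$ is unavoidable in the application (for $d=2$ the hyperplane $\mathbb{V}_d$ is a one-dimensional lattice), and there $\Ybar$ is recurrent: it visits the unit ball of order $\sqrt n$ times by time $n$, each visit carries a non-vanishing decoupling probability, so $\varepsilon_n\to1$. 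Worse, iterating the recursion $G^Y_n(z,y)\le G^{\Ybar}_n(z,y)+E_z[\one\{\tau\le n\}G^Y_{n-\tau}(Y_\tau,y)]$ requires controlling the number of decoupling epochs, and each decoupling occurs (essentially) only when $Y$ is near the origin — so you would need a bound on the number of visits of $Y$ to $B$, which is precisely the quantity you are trying to estimate. The argument is circular as stated, and assumption (A.ii) alone does not "prevent $Y$ from getting trapped" near the origin in any way that rescues the bootstrap.

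The paper closes this loop by a different decomposition that never asks the coupling to work inside a small cube $B_{r_0}$ of radius $r_0\asymp\log r$. First, the uniform nondegeneracy \eqref{Y-ell-ass} forces $Y$ to exit $B_{r_0}$ in expected time $b_1^{r_0}=n^{O(1)\cdot o(1)}$; outside $B_{r_0}$ the coupling succeeds up to the return time with probability $1-Ce^{-\alpha_1 r_0}$ (a half-space Green function estimate, Lemma \ref{YY-aux-lm-1}), and a one-dimensional exit computation gives probability $\ge\alpha_3/r$ of leaving $B_r$ within $r^{m_0}$ steps without re-entering $B_{r_0}$ (Lemma \ref{YY-aux-lm-2}). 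Combining these yields $E_x(U)\le C r^{m_1}$ for the exit time $U$ from $B=B_r$ with $r=c_1\log n$ (Lemma \ref{YY-aux-lm-6}), i.e.\ each sojourn of $Y$ in $B$ costs only $\mathrm{polylog}(n)$. Second, each excursion of $Y$ outside $B$ is coupled to an excursion of $\Ybar$ and thereby stochastically dominates a variable with tail $c_1a^{-1/2}$ up to $a\le c_2n^\gamma$ (the first-passage tail of a symmetric one-dimensional walk, Lemma \ref{stoch-lm}); a renewal/Wald argument then bounds the number of sojourns by $Cn^{1-\gamma/2}$. The product of the two bounds gives $Cn^{1-\eta}$. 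To repair your proof you would need to replace the global bootstrap by exactly this kind of two-scale control: short sojourns in $B$ and few, long excursions outside it. Your local CLT for $\Ybar$ is not needed for this; what is needed instead is the first-passage (not return-probability) asymptotics of the symmetric walk.
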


To prove the estimate, we begin by
 discarding terms outside a cube of side $r=c_1 \log n$. 
Bounding probabilities crudely by 1  gives  
\begin{align*}
&\sum_{|y|> c_1 \log n} h(y) \sum_{k=0}^{n-1} P_z(Y_k=y)   \le 
n \sum_{|y|> c_1 \log n}  h(y)  \le  Cn\sum_{k > c_1 \log n} k^{d-1} e^{-\alpha_2k} \\
&\le Cn \sum_{k > c_1 \log n}  e^{-(\alpha_2/2)k} 
\le  Cn e^{-(\alpha_2/2)c_1 \log n} \le Cn^{1-\eta} 
\end{align*}
as long as $n$ is large enough so that 
$k^{d-1}\le e^{\alpha_2k/2}$, and this works for any $c_1$. 

Let \[B=[-c_1\log n, c_1\log n]^d.\]
Since $h$ is bounded,   it now remains to show that 
\be
\sum_{k=0}^{n-1} P_z(Y_k\in B) \le Cn^{1-\eta}. 
\label{goal-Y-1}
\ee
For this we can assume $z\in B$ since accounting for the time 
to enter $B$ for the first time can only improve the estimate. 

Bound \eqref{goal-Y-1}  will be achieved in two stages. First we show that the 
Markov chain $Y$ does not stay in $B$ longer  than a time
whose mean 
is a power of the size of $B$.  Second, we show that often
enough $Y$ 
 follows the random walk $\Ybar$ during its excursions outside $B$.
The random walk excursions are long and thereby we obtain \eqref{goal-Y-1}.
Thus our first task is to construct a suitable coupling
of $Y$ and $\Ybar$. 

\begin{lemma}  Let $\zeta=\inf\{n\ge 1: \Ybar\in A\}$ be the 
first entrance time of $\Ybar$ into some set $A\subseteq\Z^d$.
Then we can couple $Y$ and $\Ybar$ so that 
\[
P_{x,x}[\text{ $Y_k\ne\Ybar_k$ for some $1\le k\le \zeta$ }]
\le C E_x \sum_{k=0}^{\zeta-1} e^{-\alpha_1\lvert\Ybar_k\rvert}.
\]
\label{YYbarlm1}
\end{lemma}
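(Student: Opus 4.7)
The plan is to build a Markovian coupling that at every step uses the single-step coupling from assumption (A.iii) as long as the two chains still agree, then track the failure probability by a straightforward union bound over steps.

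First I will construct the coupling inductively. Set $Y_0=\Ybar_0=x$. Suppose $(Y_j,\Ybar_j)_{0\le j\le k}$ has been defined. If $Y_k=\Ybar_k=y$, use assumption (A.iii) at the point $y$ to produce the joint one-step transition, so that
\[
P[Y_{k+1}\ne \Ybar_{k+1}\mid \cF_k]\le Ce^{-\alpha_1|Y_k|}
\]
on the event $\{Y_k=\Ybar_k\}$. If $Y_k\ne\Ybar_k$, run the two chains independently (their marginal laws are preserved regardless). By construction the marginal laws of $(Y_k)$ and $(\Ybar_k)$ under this joint measure are the correct ones.

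Next let $\tau=\inf\{k\ge 1: Y_k\ne\Ybar_k\}$ be the first time the two chains disagree. The event $\{Y_k\ne\Ybar_k\text{ for some }1\le k\le\zeta\}$ coincides with $\{\tau\le\zeta\}$. Decomposing by the value of $\tau$ and using the Markov property at time $k$ together with the one-step bound above,
\begin{align*}
P_{x,x}[\tau\le\zeta]
&=\sum_{k=0}^\infty P_{x,x}[\tau=k+1,\,k<\zeta]\\
&\le\sum_{k=0}^\infty E_{x,x}\bigl[\mathbf{1}\{\tau>k,\,k<\zeta\}\,P[Y_{k+1}\ne\Ybar_{k+1}\mid \cF_k]\bigr]\\
&\le C\sum_{k=0}^\infty E_{x,x}\bigl[\mathbf{1}\{\tau>k,\,k<\zeta\}\,e^{-\alpha_1|Y_k|}\bigr].
\end{align*}
On $\{\tau>k\}$ we have $Y_k=\Ybar_k$, so each summand is bounded by $E_x[\mathbf{1}\{k<\zeta\}e^{-\alpha_1|\Ybar_k|}]$ (note $\zeta$ is a function of $\Ybar$ alone). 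Dropping the indicator $\{\tau>k\}$ and summing gives
\[
P_{x,x}[\tau\le\zeta]\le C\sum_{k=0}^\infty E_x\bigl[\mathbf{1}\{k<\zeta\}\,e^{-\alpha_1|\Ybar_k|}\bigr]
=C E_x\sum_{k=0}^{\zeta-1}e^{-\alpha_1|\Ybar_k|},
\]
which is the claimed bound.

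The argument is genuinely elementary; there is no serious obstacle beyond being careful that the stepwise coupling furnished by (A.iii) can be stitched together into a joint process with the correct marginal laws, which is standard. The only mild subtlety is that on the disagreement event we must use $|Y_k|$ in the bound from (A.iii) and then convert to $|\Ybar_k|$ using $Y_k=\Ybar_k$ on $\{\tau>k\}$; this is what allows the right-hand side to be expressed as a functional of the $\Ybar$-walk alone, which is exactly the form needed for subsequent estimates in the Appendix.
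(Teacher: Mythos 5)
Your proof is correct and follows essentially the same route as the paper's: decompose over the first disagreement time, apply the single-step coupling bound from (A.iii) via the Markov property, use $Y_k=\Ybar_k$ on the agreement event to replace $|Y_k|$ by $|\Ybar_k|$, and drop the agreement indicator to get a functional of $\Ybar$ alone. The only cosmetic difference is in how the coupling is realized: the paper builds it concretely from i.i.d.\ arrays of coupled jump pairs indexed by site and local time, whereas you describe the standard inductive stitching of one-step couplings; both yield the same Markovian coupling with the correct marginals.
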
 

The proof shows that the statement works also
if  $\zeta=\infty$ is possible, but we will not need this case.

\begin{proof} For each state $x$ create an i.i.d.~sequence
$(Z^x_k, \Zbar^x_k)_{k\ge 1}$ such that $Z^x_k$ has 
distribution $q(x,x+\,\cdot\,)$,  $\Zbar^x_k$ has 
distribution $\qbar(x,x+\,\cdot\,)=\qbar(0,\,\cdot\,)$, and 
each pair $(Z^x_k, \Zbar^x_k)$ is coupled so that 
$P(Z^x_k\ne \Zbar^x_k) \le Ce^{-\alpha_1\abs{x}}$.  
For distinct $x$ these sequences are independent. 

Construct the process $(Y_n,\Ybar_n)$ as follows: with 
counting measures 
\[L_n(x)=\sum_{k=0}^n \one\{Y_k=x\}
\quad\text{and}\quad 
 \Lbar_n(x)=\sum_{k=0}^n \one\{\Ybar_k=x\} \quad(n\ge 0) 
\]
and with initial point $(Y_0,\Ybar_0)$ given, define for $n\ge 1$ 
\[
Y_n=Y_{n-1}+ Z^{Y_{n-1}}_{L_{n-1}(Y_{n-1})}
\quad\text{and}\quad
\Ybar_n=\Ybar_{n-1}+ \Zbar^{\Ybar_{n-1}}_{\Lbar_{n-1}(\Ybar_{n-1})}.
\]

In words, every time the chain $Y$ visits a state $x$, it 
reads its next jump from a new variable $Z^x_k$ which is then
discarded and never used again.  And similarly for $\Ybar$.   
This construction has the property that, if $Y_k=\Ybar_k$ for 
$0\le k\le n$ with $Y_{n}=\Ybar_{n}=x$, 
then the next joint step  is 
$(Z^x_k,\Zbar^x_k)$ for $k=L_{n}(x)=\Lbar_{n}(x)$.  In other
words, given that the processes agree up to the present 
and reside together at $x$, the probability that they separate in
the next step is bounded by $Ce^{-\alpha_1\abs{x}}$.

Now follow self-evident steps. 
\begin{align*}
&P_{x,x}[\text{ $Y_k\ne\Ybar_k$ for some $1\le k\le \zeta$ }]\\
&\le \sum_{k=1}^\infty
 P_{x,x}[\text{ $Y_j=\Ybar_j\in A^c$ for $1\le j<k$,   $Y_k\ne\Ybar_k$ } ]\\
&\le \sum_{k=1}^\infty
 E_{x,x}\bigl[\one\{\text{ $Y_j=\Ybar_j\in A^c$ for $1\le j<k$ }\}
P_{Y_{k-1},\Ybar_{k-1}}( Y_1\ne\Ybar_1) \,  \bigr]\\
&\le C\sum_{k=1}^\infty
 E_{x,x}\bigl[\one\{\text{ $Y_j=\Ybar_j\in A^c$ for $1\le j<k$ }\}
e^{-\alpha_1\lvert \Ybar_{k-1}\rvert} \,   \bigr]\\
&\le C E_x \sum_{m=0}^{\zeta-1} e^{-\alpha_1\lvert\Ybar_m\rvert}.
\qedhere
\end{align*}
\end{proof}

For the remainder of this section
 $Y$ and $\Ybar$ are always coupled
in the manner that satisfies Lemma \ref{YYbarlm1}.

\begin{lemma} Let $j\in\{1,\dotsc,d\}$ be such that the one-dimensional
random walk $\Ybar^j$ is not degenerate.  
Let $r_0$ be a positive integer and  $\wbar=\inf\{n\ge 1: \Ybar_n^j\le r_0\}$ 
 the first time the random walk $\Ybar$
 enters the half-space 
$\Hc=\{x: x^j\le r_0\}$. Couple $Y$ and $\Ybar$ starting from a
common initial state $x\notin\Hc$.
Then there is a constant $C$ independent of $r_0$ such that
\[
\sup_{x\notin\Hc } P_{x,x}[\text{ $Y_k\ne\Ybar_k$ for some
 $k\in\{1,\dotsc,\wbar\}$ }] \le  Ce^{-\alpha_1r_0}
\quad \text{ for all $r_0\ge 1$.} 
\]
The same result holds for $\Hc=\{x: x^j\ge -r_0\}$.
\label{YY-aux-lm-1}
\end{lemma}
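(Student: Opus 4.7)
The plan is to invoke Lemma~\ref{YYbarlm1} directly with $A=\Hc$ and $\zeta=\wbar$, so that the claim reduces to showing
\[
E_x\sum_{k=0}^{\wbar-1} e^{-\alpha_1\lvert\Ybar_k\rvert} \le C\,e^{-\alpha_1 r_0}
\]
uniformly for $x\notin\Hc$. Since $\Ybar_k\notin\Hc$ for every $0\le k<\wbar$, we have $\Ybar_k^j>r_0\ge 1$ and hence $\lvert\Ybar_k\rvert\ge\Ybar_k^j$, so it suffices to bound $E_x\sum_{k=0}^{\wbar-1} e^{-\alpha_1\Ybar_k^j}$.

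Next, write $S_k=\Ybar_k^j$. By projection $S$ is a symmetric one-dimensional random walk on $\Z$, nondegenerate by our choice of $j$ and of finite variance by hypothesis (A.i), starting at $m=x^j>r_0$; moreover $\wbar=\inf\{n\ge 1:S_n\le r_0\}$ is determined by $S$ alone. Writing the expectation as a sum over levels,
\[
E_m\sum_{k=0}^{\wbar-1} e^{-\alpha_1 S_k}=\sum_{l>r_0} e^{-\alpha_1 l}\,G_{r_0}(m,l),
\]
where $G_{r_0}(m,l)$ denotes the expected number of visits of $S$ to site $l$ before $\wbar$.

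The key input is the classical Green-function bound
\[
G_{r_0}(m,l)\le C\,(l-r_0),\qquad m,l>r_0,
\]
valid for any symmetric one-dimensional random walk with finite variance killed upon entering a half-line. This bound follows from the asymptotic linearity of the potential kernel ($a(x)\sim c\lvert x\rvert$) together with the identity $G_{r_0}(m,l)=a(m-r_0)+a(l-r_0)-a(m-l)$ after translation (see e.g.\ Spitzer's monograph on random walks). Substituting it into the display above,
\[
\sum_{l>r_0} e^{-\alpha_1 l}\,G_{r_0}(m,l)\le C\,e^{-\alpha_1 r_0}\sum_{l'\ge 1} l'\,e^{-\alpha_1 l'}\le C'\,e^{-\alpha_1 r_0},
\]
uniformly in $m>r_0$, which is what we wanted. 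The second case $\Hc=\{x:x^j\ge -r_0\}$ follows at once by replacing $\Ybar$ with $-\Ybar$, which has the same law by the symmetry assumption on $\qbar$, and then applying the first case.

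The main obstacle is the Green-function estimate invoked above. For nearest-neighbor symmetric walks it is a one-line consequence of the reflection principle (giving the familiar $G_{r_0}(m,l)\le 2\min(m-r_0,l-r_0)$), but since we have only a third moment and possibly unbounded jumps, a combinatorial reflection argument is unavailable and one must instead quote the potential-kernel asymptotics for symmetric finite-variance one-dimensional walks. A secondary minor point is ensuring that $\wbar<\infty$ almost surely, which is automatic since a nondegenerate symmetric random walk on $\Z$ with finite variance is recurrent.
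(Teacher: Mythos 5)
Your proof follows the paper's argument exactly: apply Lemma \ref{YYbarlm1} with $A=\Hc$, drop to the coordinate $j$ via $\lvert\Ybar_k\rvert\ge\Ybar_k^j$, rewrite the expectation as $\sum_{t>r_0}e^{-\alpha_1 t}g(x^j,t)$ with $g$ the Green function of the half-line, and conclude from $g(s,t)\le C(t-r_0)$; the reduction of the second case by symmetry is also fine. The one place you diverge is in justifying the Green-function bound, and there you misquote the classical fact: the identity $G_{r_0}(m,l)=a(m-r_0)+a(l-r_0)-a(m-l)$ is the Green function for the walk killed at \emph{hitting the single point} $r_0$, not for the walk killed upon \emph{entering the half-line} $(-\infty,r_0]$; for a walk with unbounded jumps these differ, since the walk can overshoot into the half-line without touching $r_0$. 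The statement is salvageable — killing on a larger set only decreases the Green function, so $G_{\text{half-line}}\le G_{\{r_0\}}$, and the point-kernel expression is $\le C\min(m-r_0,l-r_0)$ provided one uses not just $a(x)\sim c\lvert x\rvert$ but a uniform bound on the increments of $a$ (available for finite-variance symmetric walks) — but as written the step is not correct. The paper instead derives the bound (its \eqref{spitz-1}) from Spitzer's ladder-variable representation $g(x,y)=\sum_{n=0}^{x\wedge y}u(x-n)v(y-n)$ with $u=v$ bounded, which applies directly to the half-line and avoids this issue; you should either adopt that route or add the monotonicity-plus-potential-kernel argument explicitly.
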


\begin{proof} By Lemma \ref{YYbarlm1} 
\begin{align*}
&P_{x,x}[\text{ $Y_k\ne\Ybar_k$ for some
 $k\in\{1,\dotsc,\wbar\}$ }]
\le  CE_x\biggl[\; \sum_{k=0}^{\wbar-1} 
e^{-\alpha_1 \lvert \Ybar_k\rvert}\,\biggr]\\
&\qquad\qquad\qquad 
\le CE_{x^j}\biggl[\; \sum_{k=0}^{\wbar-1} 
e^{-\alpha_1  \Ybar_k^j}\,\biggr]
=C\sum_{t=r_0+1}^\infty e^{-\alpha_1 t} g(x^j,t) 
\end{align*}
where  for $s,t\in [r_0+1,\infty)$
\[
g(s,t)= \sum_{n=0}^\infty P_s[\Ybar_n^j=t\,,\,\wbar>n] 
\]
is the Green function of the half-line $(-\infty, r_0]$ for 
the one-dimensional random walk $\Ybar^j$.
This is the expected
number of visits to $t$ before entering $(-\infty,r_0]$,
defined on p.~209 in Spitzer \cite{spitzer}.  The development
in Sections 18 and 19 in \cite{spitzer} gives the bound 
\be
g(s,t)\le C(1+(s-r_0-1)\wedge(t-r_0-1))\le C(t-r_0),\quad 
s,t\in [r_0+1,\infty). 
\label{spitz-1}\ee

Here is some more detail. Shift $r_0+1$ to the origin to 
match the setting in \cite{spitzer}.  Then    P19.3 on p.~209
gives
\[
g(x,y)= \sum_{n=0}^{x\wedge y} u(x-n)v(y-n)\qquad \text{for $x,y>0$}
\]
where the functions $u$ and $v$ are defined on p.~201. 
For a symmetric random walk $u=v$. 
P18.7 on p.~202 implies that
\[
v(m)=\frac1{\sqrt{c}} \sum_{k=0}^\infty 
\mathbf{P}[\mathbf{Z}_1+\dotsm+\mathbf{Z}_k=m]
\]
where $c$ is a certain constant
and  $\{\mathbf{Z}_i\}$ are i.i.d.~strictly positive, integer-valued
ladder variables for the underlying random walk.  
Now one can show inductively that $v(m)\le v(0)$ for each $m$
so the quantities $u(m)=v(m)$ are bounded. This 
justifies \eqref{spitz-1}. 

Continuing from further above we get the estimate claimed in the 
statement: 
\[
E_x\biggl[\; \sum_{k=0}^{\wbar-1} 
e^{-\alpha_1 \lvert \Ybar_k\rvert}\,\biggr]
\le C\sum_{t>r_0} (t-r_0) e^{-\alpha_1 t}
\le C e^{-\alpha_1r_0}. 
\qedhere
\]
\end{proof} 

For the next lemmas abbreviate $B_r=[-r,r]^d$ for $d$-dimensional
centered cubes. 

\begin{lemma} 
With $\alpha_1$ given in the coupling hypothesis {\rm (A.iii)},
fix any positive constant $\kappa_1> 2\alpha_1^{-1}$. 
Consider large positive integers $r_0$ and $r$ that  satisfy 
 \[2\alpha_1^{-1}\log r\le  r_0\le \kappa_1\log r< r.\]  Then there exist
a positive integer $m_0$ and  a constant 
$0<\alpha_3 <\infty$ such that, for large enough $r$,  
\be
\inf_{x\in B_r\smallsetminus B_{r_0}} 
P_x[ \text{without entering $B_{r_0}$ chain $Y$ exits $B_r$ by time
 $r^{m_0}$}]
\ge \frac{\alpha_3}r.
\label{YY-aux-1.5}
\ee
\label{YY-aux-lm-2}\end{lemma}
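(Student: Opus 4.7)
The plan is to couple the Markov chain $Y$ with the symmetric random walk $\Ybar$ along a single well-chosen coordinate and reduce the estimate to a one-dimensional gambler's ruin bound for $\Ybar$. A preliminary observation: if $\Ybar^i$ is degenerate for some $i$ then (A.ii) forces $Y^i$ to be constant, and both processes are confined to an affine subspace; passing to this subspace we may assume $\Ybar^i$ is non-degenerate for every relevant $i$. Since $x\in B_r\smallsetminus B_{r_0}$, pick a coordinate $i$ with $|x^i|>r_0$, WLOG $x^i\ge r_0+1$. The argument treats the case where $i$ can be chosen with $\Ybar^i$ non-degenerate; the marginal case where no such $i$ exists is discussed at the end.

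Apply Lemma \ref{YY-aux-lm-1} with the half-space $\Hc=\{y:y^i\le r_0\}$ and the coordinate index $i$ in place of $j$ to couple $Y$ and $\Ybar$ starting at $x$. The coupling remains valid up to the first time $\wbar$ that $\Ybar^i$ enters $\Hc$, with failure probability $\le Ce^{-\alpha_1 r_0}\le Cr^{-2}$ since $r_0\ge 2\alpha_1^{-1}\log r$.

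Next I would analyze the one-dimensional symmetric random walk $\Ybar^i$ starting from $x^i\in(r_0,r]$, with exit time $\tau=\inf\{n:\Ybar^i_n\notin(r_0,r]\}$. Under (A.i), $\Ybar^i$ has finite variance $\sigma^2$ and bounded mean overshoots at the two boundaries. Optional stopping on the mean-zero martingale $\Ybar^i$ yields the gambler's ruin estimate
\[P_{x^i}[\Ybar^i_\tau\ge r]\ge c\,\frac{x^i-r_0}{r-r_0+O(1)}\ge\frac{c}{r},\]
while optional stopping on $(\Ybar^i_n)^2-n\sigma^2$ gives $E_{x^i}[\tau]\le Cr^2$ and hence $P[\tau>r^{m_0}]\le Cr^{2-m_0}$ by Markov's inequality. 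Taking $m_0\ge 4$ yields
\[P_{x^i}[\Ybar^i_\tau\ge r,\ \tau\le r^{m_0}]\ge \frac{c}{r}-Cr^{-2}.\]

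On the intersection of this event with the coupling success, $Y_\tau=\Ybar_\tau$ satisfies $Y^i_\tau\ge r$ (so $Y$ has exited $B_r$), and for $k<\tau$ we have $Y^i_k=\Ybar^i_k>r_0$, which forces $Y_k\notin B_{r_0}$ since $B_{r_0}$ requires every coordinate to lie in $[-r_0,r_0]$. Combining the error terms, for large $r$,
\[P_x[Y\text{ exits }B_r\text{ by time }r^{m_0}\text{ without entering }B_{r_0}]\ge \frac{c}{r}-Cr^{-2}-Cr^{-2}\ge\frac{\alpha_3}{r},\]
which is \eqref{YY-aux-1.5}. The main obstacle is the marginal case where every coordinate $i$ with $|x^i|>r_0$ has $\Ybar^i$ degenerate: then $Y$ is automatically confined away from $B_{r_0}$ by the frozen coordinates, but in the non-degenerate subspace the starting point satisfies $|x^j|\le r_0$ for every $j$, so Lemma \ref{YY-aux-lm-1} does not apply directly. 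One handles this by an initial drift phase using (A.ii) to push $Y^{j_0}$ above $r_0$ before invoking the main argument; verifying that the drift-phase cost is absorbed into $\alpha_3/r$ (possibly by enlarging $m_0$) is the delicate technical step.
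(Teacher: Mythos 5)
Your proposal follows essentially the same route as the paper: couple $Y$ to $\Ybar$ along one nondegenerate coordinate via Lemma \ref{YY-aux-lm-1} (with coupling failure $\le Ce^{-\alpha_1 r_0}\le Cr^{-2}$ from $r_0\ge 2\alpha_1^{-1}\log r$), get the $c/r$ lower bound from one-dimensional gambler's ruin, control the exit time by a second-moment/Green-function bound $E[\tau]\le Cr^2$ plus Markov, and observe that a frozen degenerate coordinate keeps $Y$ out of $B_{r_0}$. Two points remain.

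First, the step you flag as ``the delicate technical step'' is a genuine gap, and it is exactly where the hypotheses $r_0\le\kappa_1\log r$ and \eqref{Y-ell-ass} earn their keep. The paper closes it as follows: let $T$ be the first time some nondegenerate coordinate of $Y$ leaves $[-r_0,r_0]$. By \eqref{Y-ell-ass}, one way to realize $T$ is to wait for $2r_0$ consecutive steps of size $\ge 1$ in the distinguished coordinate $j$, so $T$ is stochastically dominated by a geometric-type variable and $E_xT\le b_1^{r_0}$ uniformly. Chebyshev then gives $P_x[T>r^{m_0}/2]\le 2r^{-m_0}b_1^{r_0}\le 2r^{\kappa_1\log b_1-m_0}\le\tfrac12$ once $m_0\ge\kappa_1\log b_1+1$; this is why $m_0$ must in general be taken larger than $4$ and why the upper bound $r_0\le\kappa_1\log r$ is needed. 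The drift-phase cost is therefore only a factor $\tfrac12$, absorbed into $\alpha_3$, and the Markov property at $T$ (with the first-case bound applied to $Y_T$, using $r^{m_0}/2$ in place of $r^{m_0}$) finishes the argument. Note also that during $[0,T]$ the chain cannot enter $B_{r_0}$ because the coordinate $i$ with $|x^i|>r_0$ is degenerate, hence constant — you stated this correctly.

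Second, a smaller gloss: the gambler's ruin bound you quote, with the overshoot correction, actually reads (after Spitzer) as $(x^i-r_0-1-c_1)/(r-r_0-1)$ with the correction in the \emph{numerator}, which is negative when $x^i$ is within $c_1$ of $r_0+1$. The paper repairs this by noting that from $x^i=r_0+1$ there is a fixed positive probability $\theta$ of taking $m>c_1$ steps to the right before applying the estimate, yielding the uniform bound $\alpha_4/r$. Your optional-stopping phrasing does not by itself produce a bound that is uniform down to $x^i=r_0+1$; you need this extra step or an equivalent one.
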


\begin{proof} 
We consider first the case where 
 $x\in B_r\smallsetminus B_{r_0}$ has a coordinate $x^j$
that satisfies $x^j\in[-r, -r_0-1]\cup[r_0+1,r]$ and 
$\Ybar^j$ is nondegenerate.  For this case we can take $m_0=4$.
A higher $m_0$ may be needed to move a suitable coordinate 
out of the interval $[-r_0,r_0]$.  This is done in the second
step of the proof. 

The same argument works for
  both  $x^j\in[-r, -r_0-1]$
and $x^j\in[r_0+1,r]$.  We  treat the case
$x^j\in[r_0+1,r]$.  
One way to realize the event in \eqref{YY-aux-1.5}  is this:  starting at 
$x^j$,
the $\Ybar^j$ walk exits  $[r_0+1,r]$ by time $r^4$ through the right 
boundary into $[r+1,\infty)$, 
and $Y$ and $\Ybar$ stay coupled together throughout this time. 
Let $\zetabar$ be the time $\Ybar^j$ exits $[r_0+1,r]$ and $\wbar$ the 
time $\Ybar^j$ enters $(-\infty, r_0]$.  Then $\wbar\ge\zetabar$. 
 Thus the 
complementary probability of \eqref{YY-aux-1.5} is bounded by 
\be
\begin{split}
&P_{x^j}\{\text{ $\Ybar^j$  exits  $[r_0+1,r]$ into $(-\infty, r_0]$ }\}\\
&\qquad + \; P_{x^j}\{ \zetabar >r^4\} \; +  \;  
P_{x,x}\{\text{ $Y_k\ne\Ybar_k$ for some
 $k\in\{1,\dotsc,\wbar\}$ }\}.
\end{split} 
\label{YY-aux-3}\ee

We treat the terms one at a time. 
From the development on p.~253-255  in \cite{spitzer} we get the 
bound
\be
P_{x^j}\{\text{ $\Ybar^j$  exits  $[r_0+1,r]$ into $(-\infty, r_0]$ }\}
 \le 1-\frac{\alpha_4}r
\label{YY-aux-4}
\ee
for some constant $\alpha_4>0$. In some more detail:
P22.7 on p.~253, 
the inequality in the third display  of p.~255, and the third moment
assumption on the steps of $\Ybar$  give a lower bound 
\be
P_{x^j}\{\text{ $\Ybar^j$  exits  $[r_0+1,r]$ into $[r+1,\infty)$ }\}
\ge \frac{x^j-r_0-1-c_1}{r-r_0-1}
\label{spitz-8}
\ee
for the probability of exiting to the right.
Here $c_1$ is a constant that comes from the term denoted
in \cite{spitzer}  by
$  M\sum_{s=0}^N(1+s)a(s) $
whose finiteness follows from the third moment assumption.  
The text  on p.~254-255 suggests that these
steps need the aperiodicity assumption. This need for
aperiodicity  can be traced back via P22.5 to 
P22.4 which is used to assert the boundedness of 
$u(x)$ and $v(x)$.  But as we observed above in the derivation
of \eqref{spitz-1} boundedness of $u(x)$ and $v(x)$   is true
without any additional assumptions.  
 
 To go forward from \eqref{spitz-8} 
fix any $m>c_1$ so that the numerator
above is positive for $x^j=r_0+1+m$. 
The probability in \eqref{spitz-8}
 is minimized at $x^j=r_0+1$, and from $x^j=r_0+1$
there is a fixed positive
probability $\theta$ to take $m$ steps to the right to get
past the point $x^j=r_0+1+m$.  Thus for all $x^j\in[r_0+1,r]$ we get the lower bound 
\[
P_{x^j}\{\text{ $\Ybar^j$  exits  $[r_0+1,r]$ into $[r+1,\infty)$ }\}
\ge \frac{\theta(m-c_1)}{r-r_0-1} \ge \frac{\alpha_4}{r}
\] 
and \eqref{YY-aux-4} is verified. 

As in  \eqref{spitz-1} let $g(s,t)$ be the Green function
of the random walk $\Ybar^j$  for the 
half-line $(-\infty, r_0]$,  and 
let $\tilde{g}(s,t)$ be the Green function for the complement
of the interval 
$[r_0+1,r]$. Then $\tilde{g}(s,t)\le g(s,t)$, and by \eqref{spitz-1}
we get this moment bound:
\begin{align*}
 E_{x^j}[\,\zetabar\,]=\sum_{t=r_0+1}^r \tilde{g}(x^j,t) 
\le \sum_{t=r_0+1}^r g(x^j,t) 
\le Cr^2.
\end{align*}
Consequently, 
 uniformly over $x^j\in [r_0+1,r]$, 
\be
 P_{x^j}[ \zetabar >r^4] \le \frac{C}{r^2}.
\label{YY-aux-5} \ee

From Lemma \ref{YY-aux-lm-1}  
\be
P_x[\text{ $Y_k\ne\Ybar_k$ for some
 $k\in\{1,\dotsc,\wbar\}$ }] \le  Ce^{-\alpha_1r_0}.
\label{YY-aux-6}\ee

Putting bounds \eqref{YY-aux-4}, \eqref{YY-aux-5}
and \eqref{YY-aux-6} together gives an upper bound of 
\[
1\;-\;\frac{\alpha_4}r \;+\; \frac{C}{r^2} \;+\;   Ce^{-\alpha_1r_0}
\]
for the sum in \eqref{YY-aux-3} which bounds the complement of the 
probability in \eqref{YY-aux-1.5}.  By assumption 
$r_0>2\alpha_1^{-1}\log r$, so for large enough
$r$  the sum above is not more than $1-\alpha_3/r$ for some
constant $\alpha_3>0$.  

\medskip 

The lemma is now proved for those   $x\in B_r\smallsetminus B_{r_0}$ 
for which  some 
\[ j\in J\equiv \{1\le j\le d: \text{ the one-dimensional 
walk $\Ybar^j$ is nondegenerate}\}\]
satisfies  $x^j\in[-r, -r_0-1]\cup[r_0+1,r]$.
Now suppose  $x\in B_r\smallsetminus B_{r_0}$  but all $j\in J$ 
satisfy $x^j\in[-r_0,r_0]$. 
Let 
\[
T=\inf\{n\ge 1: \text{$Y^j_n\notin[-r_0,r_0]$ for some $j\in J$}\}.
\]
The first part of the proof gives $P_x$-almost surely 
\[
P_{Y_T}[ 
\text{without entering $B_{r_0}$ chain $Y$ exits $B_r$ by time
 $r^{4}/2$}]
\ge \frac{\alpha_3}r.
\]
Replacing $r^4$ by $r^4/2$ only affects the constant in \eqref{YY-aux-5}. 
It can of course happen that $Y_T\notin B_r$ but then we interpret the above
probability as one. 

By the Markov property it remains to show that  for a suitable $m_0$ 
\be
\inf\bigl\{\; P_x[ T\le r^{m_0}/2]:  \text{$x\in B_r\smallsetminus B_{r_0}$  but
$x^j\in[-r_0,r_0]$ for  all $j\in J$}
\,\bigr\} 
\label{YY-aux-10}
\ee
is bounded below by a positive constant.  Hypothesis \eqref{Y-ell-ass} implies
that for some constant $b_1$,
 $E_xT\le b_1^{r_0}$ uniformly over the relevant  $x$.  This is because 
one way to realize $T$ is to wait until some coordinate $Y^j$ 
takes $2r_0$ successive identical steps. By hypothesis
  \eqref{Y-ell-ass} this random time is stochastically bounded by a 
 geometrically distributed
random variable. 
 
It is also necessary for this argument
 that during time $[0, T]$ the chain $Y$
does not enter $B_{r_0}$.   Indeed, under the present assumptions 
 the chain never enters $B_{r_0}$.  This is because for
 $x\in B_r\smallsetminus B_{r_0}$  some 
coordinate $i$ must satisfy  $x^i\in[-r, -r_0-1]\cup[r_0+1,r]$.
But now this coordinate $i\notin J$, and so by hypothesis
(A.ii) the one-dimensional process $Y^i$ is constant, 
$Y^i_n=x^i\notin[-r_0,r_0]$ for all $n$. 
  
Finally, the required positive lower  bound for   \eqref{YY-aux-10}
comes by Chebychev.  Take $m_0\ge \kappa_1\log b_1 + 1$  where $\kappa_1$
comes from the assumptions of the lemma.  Then, by the hypothesis
$r_0\le \kappa_1\log r$,  
\[
P_x[ T> r^{m_0}/2]\le 2r^{-m_0} b_1^{r_0}\le 2r^{\kappa_1\log b_1-m_0}\le \tfrac12
\]
for $r\ge 4$.  
\end{proof}

We come to one of the main auxiliary lemmas of this development. 

\begin{lemma}  Let $U=\inf\{n\ge 0: Y_n\notin B_r\}$  be 
the first exit time from $B_r$ for the Markov chain $Y$. 
 Then there exist finite positive constants $C_1, m_1$ such that 
\[\sup_{x\in B_r}E_x(U)\le C_1r^{m_1}\quad\text{ for all $1\le r<\infty$.}\] 
\label{YY-aux-lm-6}
\end{lemma}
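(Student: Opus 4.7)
The plan is to combine Lemma \ref{YY-aux-lm-2} (which gives an $\alpha_3/r$ escape probability per $r^{m_0}$ steps from the shell $B_r\smallsetminus B_{r_0}$) with a polynomial-in-$r$ bound on the exit time from the small cube $B_{r_0}$, using the scale $r_0=\lceil\kappa_1\log r\rceil$ prescribed by that lemma. For $r$ of bounded size the statement is trivial since Hypothesis \eqref{Y-ell-ass} already forces the exit time from any fixed bounded set to have finite mean, so I assume throughout that $r$ is large enough for $r_0<r$ and Lemma \ref{YY-aux-lm-2} to apply.

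First I would establish that $\sup_{x\in B_{r_0}}E_x[\tau_{r_0}]\le Cr^{m_2}$ for some constant $m_2$, where $\tau_{r_0}$ denotes the exit time from $B_{r_0}$. Indeed, Hypothesis \eqref{Y-ell-ass} and the Markov property give that in each block of $2r_0+1$ consecutive steps, the probability that coordinate $Y^j$ increases by at least $1$ at every step is at least $\kappa_0^{2r_0+1}$. On such a block $Y^j$ gains at least $2r_0+1$ units and therefore leaves $[-r_0,r_0]$ no matter where it started inside, so $Y$ leaves $B_{r_0}$. Hence $\tau_{r_0}$ is stochastically dominated by $(2r_0+1)G$ with $G$ geometric of parameter $\kappa_0^{2r_0+1}$, yielding $E_x[\tau_{r_0}]\le(2r_0+1)\kappa_0^{-(2r_0+1)}\le Cr^{m_2}$ since $r_0=O(\log r)$.

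Next I would set up a renewal decomposition of $U$. Define $\sigma_0=0$ and, given $\sigma_k$ with $Y_{\sigma_k}\in B_r$, put
\[\sigma_{k+1}=\begin{cases}\inf\{n>\sigma_k:Y_n\notin B_{r_0}\}&\text{if }Y_{\sigma_k}\in B_{r_0},\\ \sigma_k+r^{m_0}&\text{if }Y_{\sigma_k}\in B_r\smallsetminus B_{r_0},\end{cases}\]
and let $N=\inf\{k:Y_{\sigma_k}\notin B_r\}$, so that $U\le\sigma_N$. By the strong Markov property and the bound of the previous paragraph, $E_x[\sigma_{k+1}-\sigma_k\mid\mathcal{F}_{\sigma_k}]\le Cr^{m_3}$ on $\{k<N\}$ with $m_3=\max(m_0,m_2)$. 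A cycle of the first type terminates with $Y_{\sigma_{k+1}}\in B_{r_0}^c$, hence is either the final cycle or is immediately followed by a cycle of the second type, so $N$ is bounded by twice the number $N_2$ of second-type cycles, plus one. Lemma \ref{YY-aux-lm-2}, applied at each second-type cycle via the strong Markov property, shows that the conditional probability of exiting $B_r$ during that cycle is at least $\alpha_3/r$, whence $N_2$ is stochastically dominated by a geometric random variable with parameter $\alpha_3/r$ and $E_x[N]\le Cr$. A Wald-type summation then gives $E_x[U]\le E_x[\sigma_N]\le Cr^{m_3}\,E_x[N]\le C_1r^{m_1}$ with $m_1=m_3+1$. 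The only organizational point requiring any care is ensuring that first-type cycles cannot stack up consecutively, so that $N$ is controlled by $N_2$; this is automatic because a first-type cycle ends outside $B_{r_0}$.
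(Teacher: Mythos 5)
Your proposal follows essentially the same route as the paper's proof: a polynomial-in-$r$ bound on the exit time from $B_{r_0}$ obtained from hypothesis \eqref{Y-ell-ass} together with $r_0=O(\log r)$ (this is exactly the paper's bound \eqref{ell-escape}), alternated with shell excursions on which Lemma \ref{YY-aux-lm-2} supplies an escape probability $\alpha_3/r$ per block of $r^{m_0}$ steps. The paper organizes the bookkeeping as an explicit double sum over the excursions of $Y$ outside $B_{r_0}$ (the times $T_j,S_j$ and events $H_j$), whereas you use fixed-length blocks, geometric domination of the cycle count, and a Wald-type summation; the inputs and the resulting exponent are the same.

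One definitional slip needs repair. With $N=\inf\{k: Y_{\sigma_k}\notin B_r\}$, the geometric domination of $N_2$ does not follow from Lemma \ref{YY-aux-lm-2}: that lemma guarantees that the chain exits $B_r$ at \emph{some} time $\le\sigma_k+r^{m_0}$, not that it sits outside $B_r$ at the deterministic time $\sigma_{k+1}=\sigma_k+r^{m_0}$. The chain may exit and re-enter within the block, so your $N$ is not controlled by the lemma (it could even be infinite while $U$ is small). The fix is immediate: define $N$ as the first index $k$ with $U\le\sigma_k$, i.e.\ stop the cycle construction at $U$. Then $U\le\sigma_N$ still holds, the occurrence of the event of Lemma \ref{YY-aux-lm-2} in a second-type cycle forces $N\le k+1$, the alternation argument ruling out consecutive first-type cycles is unaffected, and the Wald step goes through since $\{N>k\}=\{U>\sigma_k\}\in\mathcal{F}_{\sigma_k}$. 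With this adjustment the argument is correct.
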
 

\begin{proof} First observe that $\sup_{x\in B_r} E_x(U)<\infty$
by assumption \eqref{Y-ell-ass} because by a geometric
time some coordinate  $Y^j$ has experienced 
$2r$ identical steps in succession.
Throughout, let $r_0<r$ satisfy the assumptions of Lemma
\ref{YY-aux-lm-2}.  Once the statement is proved for large enough
$r$, we obtain it for all $r\ge 1$ by increasing $C_1$. 

Let $0=T_0=S_0\le T_1\le S_1\le T_2\le\dotsm$ be the successive
exit and  entrance
times into $B_{r_0}$. Precisely,
 for $i\ge 1$ as long as $S_{i-1}<\infty$ 
\[
T_i=\inf\{ n\ge S_{i-1}: Y_n\notin B_{r_0}\}
\quad\text{and}\quad
S_i=\inf\{ n\ge T_{i}: Y_n\in B_{r_0}\}
\]
Once $S_i=\infty$ then we set $T_j=S_j=\infty$ for all $j>i$. 
If $Y_0\in B_r\smallsetminus B_{r_0}$ then also $T_1=0$. 
Again by assumption \eqref{Y-ell-ass} (and as observed
in the proof of Lemma \ref{YY-aux-lm-2})
 there is a constant 
$0<b_1<\infty$ 
such that  
\be \sup_{x\in B_{r_0}}E_x[T_1]\le b_1^{r_0}.  \label{ell-escape}\ee
So a priori $T_1$ is finite but $S_1=\infty$ is possible. 
Since $T_1\le U<\infty$ we can decompose as follows: 
\be
\begin{split}
E_x[U]&= \sum_{j=1}^\infty E_x[U,\, T_j\le U<S_j]\\
 &= \sum_{j=1}^\infty E_x[T_j\,,\, T_j\le U<S_j] + 
\sum_{j=1}^\infty E_x[U-T_j\,,\, T_j\le U<S_j].
\end{split}
\label{YY-aux-11}\ee

We first treat the last sum in \eqref{YY-aux-11}. By an inductive
application of  Lemma \ref{YY-aux-lm-2}, 
for any $z\in B_r\smallsetminus B_{r_0}$,
\begin{align*}
&P_z[U>jr^{m_0},\, U<S_1] \le P_z[\text{ $Y_k\in B_r\smallsetminus B_{r_0}$
for $k\le jr^{m_0}$ }] \\
&=E_z\bigl[\one\{\text{ $Y_k\in B_r\smallsetminus B_{r_0}$          
for $k\le (j-1)r^{m_0}$ }\} P_{Y_{(j-1)r^{m_0}}}\{ \text{ 
$Y_k\in B_r\smallsetminus B_{r_0}$  for $k\le r^{m_0}$ }\}\,\bigr]\\
&\le \dotsm \le (1-\alpha_3r^{-1})^j.
\end{align*} 
Utilizing this, still for $z\in B_r\smallsetminus B_{r_0}$,
\be\begin{split}
E_z[ U,\,U<S_1]&=\sum_{m=0}^\infty P_z[U>m\,,\,U<S_1]\\
&\le r^{m_0} \sum_{j=0}^\infty P_z[ U>jr^{m_0}\,,\,U<S_1] \le r^{m_0+1}\alpha_3^{-1}.
\end{split}\label{YY-aux-12}\ee 
Next we take into consideration the failure to exit $B_r$ 
during the earlier excursions in $B_r\smallsetminus B_{r_0}$.
Let 
\[ H_i=\{ \text{$Y_n\in B_r$ for $T_i\le n<S_i$} \}  \]
be the event that in between the $i$th exit from $B_{r_0}$ and 
entrance back into $B_{r_0}$ the chain $Y$ does not exit $B_r$. 
We shall repeatedly use this  consequence of Lemma \ref{YY-aux-lm-2}:
 \be \text{  for $i\ge 1$, on the event $\{T_i<\infty\}$,   
 $P_x[ H_i\,\vert\,\Fc_{T_i}]\le 1-\alpha_3r^{-1}$.  
}\label{YY-aux-14}\ee
  Here is the 
first instance. 

\begin{align*}
&E_x[U-T_j\,,\, T_j\le U<S_j]
=
E_x\Bigl[\; \prod_{k=1}^{j-1}\one_{H_k} \cdot\one\{T_j<\infty\}
\cdot E_{Y_{T_j}}(U,\,U<S_1)\Bigr]\\
&\le  r^{m_0+1}\alpha_3^{-1} E_x\Bigl[ \;\prod_{k=1}^{j-1}\one_{H_k}
\cdot\one\{T_{j-1}<\infty\} \Bigr]
\le  r^{m_0+1}\alpha_3^{-1} (1-\alpha_3r^{-1})^{j-1}.
\end{align*}
Note that if $Y_{T_j}$ above lies outside $B_r$ then 
$E_{Y_{T_j}}(U)=0$.  In the other case $Y_{T_j}\in 
 B_r\smallsetminus B_{r_0}$ 
and \eqref{YY-aux-12} applies. 
So for  the last sum in \eqref{YY-aux-11}:
\be
\sum_{j=1}^\infty E_x[U-T_j\,,\, T_j\le U<S_j]
\le \sum_{j=1}^\infty  r^{m_0+1}\alpha_3^{-1} (1-\alpha_3r^{-1})^{j-1}
\le r^{m_0+2}\alpha_3^{-2}.
\label{YY-aux-17}
\ee

We turn to the second-last sum in \eqref{YY-aux-11}. 
Utilizing \eqref{ell-escape} and \eqref{YY-aux-14}, 
\be
\begin{split}
&E_x[T_j\,,\, T_j\le U<S_j] \ \le\  
\sum_{i=0}^{j-1} E_x\Bigl[\; \prod_{k=1}^{j-1}\one_{H_k} \cdot\one\{T_j<\infty\}\cdot
 (T_{i+1}-T_i)\Bigr] \\
&\le \  b_1^{r_0} (1-\alpha_3r^{-1})^{j-1}  \\
&\quad   +\; 
 \sum_{i=1}^{j-1} E_x\Bigl[\; \prod_{k=1}^{i-1}\one_{H_k}\cdot
(T_{i+1}-T_i)\one_{H_i} \cdot\one\{T_{i+1}<\infty\} \Bigr] (1-\alpha_3r^{-1})^{j-1-i} .
\end{split}
\label{YY-aux-18}\ee
Split the last expectation  as
\begin{align}
&E_x\Bigl[\; \prod_{k=1}^{i-1}\one_{H_k}\cdot              
(T_{i+1}-T_i)\one_{H_i}\cdot\one\{T_{i+1}<\infty\} \Bigr]\nn\\
&\le
E_x\Bigl[\; \prod_{k=1}^{i-1}\one_{H_k}\cdot              
(T_{i+1}-S_i)\one_{H_i} \cdot\one\{S_{i}<\infty\}\Bigr]\nn\\
&\qquad\quad  +\; E_x\Bigl[\; \prod_{k=1}^{i-1}\one_{H_k}\cdot              
(S_i-T_i)\one_{H_i}\cdot\one\{T_{i}<\infty\} \Bigr]\nn\\
&\le 
E_x\Bigl[\; \prod_{k=1}^{i-1}\one_{H_k}\cdot\one\{S_{i}<\infty\}\cdot              
E_{Y_{S_i}}(T_1) \Bigr]
+ E_x\Bigl[\; \prod_{k=1}^{i-1}\one_{H_k}\cdot \one\{T_{i}<\infty\}\cdot             
E_{Y_{T_i}}(S_1\cdot\one_{H_1}) \Bigr]\nn\\
&\le E_x\Bigl[\; \prod_{k=1}^{i-1}\one_{H_k}\cdot \one\{T_{i-1}<\infty\}\Bigr]
(b_1^{r_0} + r^{m_0+1}\alpha_3^{-1})\nn\\
&\le (1-\alpha_3r^{-1})^{i-1} (b_1^{r_0} + r^{m_0+1}\alpha_3^{-1}).              
\label{YY-aux-20}
\end{align}
In the second-last inequality above,
 before applying \eqref{YY-aux-14} 
to the $H_k$'s,  
$E_{Y_{S_i}}(T_1)\le b_1^{r_0}$ comes
from \eqref{ell-escape}. The other expectation is 
estimated again by iterating Lemma \ref{YY-aux-lm-2} and 
 again with $z\in B_r\smallsetminus B_{r_0}$:
\begin{align*}
E_{z}(S_1\cdot\one_{H_1})
&=\sum_{m=0}^\infty P_z[S_1>m\,,\,H_1]
\le\sum_{m=0}^\infty P_z[\text{ $Y_k\in B_r\smallsetminus B_{r_0}$
for $k\le m$ }]\\
&\le r^{m_0}\sum_{j=0}^\infty P_z[\text{ $Y_k\in B_r\smallsetminus B_{r_0}$
for $k\le jr^{m_0}$ }]
\le r^{m_0+1}\alpha_3^{-1}. 
\end{align*} 
Insert the bound from line \eqref{YY-aux-20} back up into
\eqref{YY-aux-18} to get the bound 
\begin{align*}
E_x[T_j\,,\, T_j\le U<S_j] \le 
 (2 b_1^{r_0} + r^{m_0+1}\alpha_3^{-1})j   (1-\alpha_3r^{-1})^{j-2}.
\end{align*}
Finally, bound  the second-last sum in \eqref{YY-aux-11}:
\begin{align*}
\sum_{j=1}^\infty E_x[T_j\,,\, T_j\le U<S_j] 
\le \bigl(2 b_1^{r_0}r^2\alpha_3^{-2} + r^{m_0+3}\alpha_3^{-3}\bigr)
 (1-\alpha_3r^{-1})^{-1}.
\end{align*}
Taking $r$ large enough so that $\alpha_3r^{-1}<1/2$ and 
combining this with \eqref{YY-aux-11} and \eqref{YY-aux-17} gives
\[
E_x[U]\le r^{m_0+2}\alpha_1^{-2} + 
4 b_1^{r_0}r^2\alpha_3^{-2} + 2r^{m_0+3}\alpha_3^{-3}.
\]
Since $r_0\le \kappa_1\log r$ for some constant $C$,
the above bound simplifies to $C_1r^{m_1}$.  
\end{proof} 

For the remainder of the proof we work with 
$B=B_r$ for $r=c_1\log n$. 
The above estimate gives us one part of the argument for 
\eqref{goal-Y-1}, namely that the Markov chain $Y$ exits
$B=[-c_1\log n, c_1\log n]^d$ fast enough.

Let $0=V_0<U_1<V_1<U_2<V_2<\dotsm$ be the successive entrance
times $V_i$ into $B$  and exit times $U_i$ from $B$ for the 
Markov chain $Y$, assuming that $Y_0=z\in B$. It is possible that 
some $V_i=\infty$.  But if $V_i<\infty$ then also $U_{i+1}<\infty$
due to assumption \eqref{Y-ell-ass},  as already observed.
The time intervals spent in $B$ are $[V_i, U_{i+1})$ each of length
at least 1. Thus, by applying Lemma \ref{YY-aux-lm-6}, 
\be\begin{split}
\sum_{k=0}^{n-1} P_z(Y_k\in B) &\le 
\sum_{i=0}^n E_z\bigl[\, (U_{i+1}-V_i) \one\{V_i\le n\}\bigr]\\
&\le \sum_{i=0}^n E_z\bigl[\, E_{Y_{V_i}} (U_1) \one\{V_i\le n\}\bigr]\\
&\le C(\log n)^{m_1} E_z\biggl[\, \sum_{i=0}^n   \one\{V_i\le n\}\biggr].
\end{split} 
\label{temp-Y-2}
\ee

Next we bound the expected number of returns to $B$ by the number of
 excursions outside $B$ that fit in a time of length $n$:
\begin{align} 
E_z\biggl[\, \sum_{i=0}^n   \one\{V_i\le n\}\biggr]
&=
E_z\biggl[\, \sum_{i=0}^n   \one\Bigl\{\,
\sum_{j=1}^i (V_j-V_{j-1})\le n\Bigr\}\biggr]\nn\\
&\le E_z\biggl[\, \sum_{i=0}^n   \one\Bigl\{\,
\sum_{j=1}^i (V_j-U_{j})\le n\Bigr\}\biggr]
\label{line-a7}
\end{align}

According to the usual notion of stochastic dominance, 
 the random vector $(\xi_1,\dotsc,\xi_n)$
dominates $(\eta_1,\dotsc,\eta_n)$ if 
\[ Ef(\xi_1,\dotsc,\xi_n)\ge Ef(\eta_1,\dotsc,\eta_n) \]
for any function $f$ that is 
coordinatewise nondecreasing.  If the 
 $\{\xi_i:1\le i\le n\}$ are adapted to the filtration
$\{\Gc_i:1\le i\le n\}$, and 
$P[\xi_i> a\vert\Gc_{i-1}]\ge 1-F(a)$ for some
distribution function $F$,  then the 
 $\{\eta_i\}$ can be taken i.i.d.\ $F$-distributed.  

\begin{lemma} There exist positive constants $c_1$, $c_2$
and $\gamma$ such that the following holds:   
the excursion lengths $\{V_j-U_j:1\le j\le n\}$ 
stochastically dominate i.i.d.\ variables $\{\eta_j\}$ whose
common distribution satisfies 
$\Pv[\eta\ge a]\ge c_1a^{-1/2}$ for $1\le a\le c_2n^\gamma$. 
\label{stoch-lm} 
\end{lemma}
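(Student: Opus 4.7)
The plan is to couple each excursion of $Y$ away from $B$ with an excursion of the symmetric random walk $\Ybar$, then invoke the classical random-walk persistence estimate to obtain an $a^{-1/2}$ lower tail.

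First I identify the escaping coordinate. Since $Y_{V_{j-1}} \in B$ and, by hypothesis (A.ii), every coordinate for which $\Ybar^i$ is degenerate is fixed under $Y$, the exit point $Y_{U_j} \notin B$ must have some nondegenerate coordinate $j_* \in J$ with $|Y^{j_*}_{U_j}| > c_1 \log n$ (writing $c_1$ for the box-size constant as in the appendix). Conditioning on $\Fc_{U_j}$ and on $j_*$, and using sign symmetry, I may assume $Y^{j_*}_{U_j} > c_1 \log n$. I then couple $Y$ and $\Ybar$ starting from $x := Y_{U_j}$ as in Lemma \ref{YYbarlm1}.

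Set $A_a := \{\Ybar^{j_*}_k > c_1 \log n \text{ for all } 0 \le k \le a\}$. The shifted process $(\Ybar^{j_*}_k - x^{j_*})_{k \ge 0}$ is a symmetric one-dimensional random walk with finite third moment, and $A_a$ is at least as likely as this walk staying nonnegative up to time $a$; the classical persistence estimate for mean-zero finite-variance symmetric walks yields $\Pv_x(A_a) \ge c\, a^{-1/2}$ for all $a \ge 1$, uniformly in the starting point. On $A_a$ one has $|\Ybar_k| \ge \Ybar^{j_*}_k > c_1 \log n$ for all $k \le a$, and the coupling bound from Lemma \ref{YYbarlm1} (applied with target set $B$, or a direct summation using that decoupling at step $k$ costs $Ce^{-\alpha_1|\Ybar_{k-1}|}$) gives
\[ \Pv_{x,x}\{Y_k \ne \Ybar_k \text{ for some } k \le a\,;\, A_a\} \le a C e^{-\alpha_1 c_1 \log n} = a C n^{-\alpha_1 c_1}. \]

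Now I take $c_1$ large and set $\gamma := \tfrac13 \alpha_1 c_1$. For $1 \le a \le c_2 n^\gamma$ the coupling-error term is smaller than $\tfrac12 c\, a^{-1/2}$, so the intersection of $A_a$ with the no-decoupling event has probability at least $\tfrac12 c\, a^{-1/2}$; on that intersection $Y_k = \Ybar_k \notin B$ throughout $0 \le k \le a$, hence $V_j - U_j \ge a$. This yields the conditional tail bound $\Pv\{V_j - U_j \ge a \mid \Fc_{U_j}\} \ge c'\, a^{-1/2}$, uniformly in $Y_{U_j}$, and the stochastic-dominance criterion recalled just before the lemma delivers the required i.i.d.\ minorants $\{\eta_j\}$. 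The main technical delicacy is the balance of scales: the coupling error is $O(a\, n^{-\alpha_1 c_1})$ while the persistence probability is $\Theta(a^{-1/2})$, so $c_1$ must be taken sufficiently large (relative to $\alpha_1^{-1}$) to carry the $a^{-1/2}$ tail out to $a$ of polynomial size in $n$.
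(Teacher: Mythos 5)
Your argument is correct and follows the paper's proof in all essentials: a nondegenerate escaping coordinate guaranteed by (A.ii), coupling with the symmetric walk $\Ybar$, the classical $a^{-1/2}$ persistence estimate for the one-dimensional coordinate, and a decoupling error that is exponentially small in $r=c_1\log n$. The only difference is that you control the decoupling probability by a union bound over the $a$ steps (error $aCn^{-\alpha_1 c_1}$), whereas the paper invokes Lemma \ref{YY-aux-lm-1}, whose half-line Green-function estimate gives a bound uniform in the excursion length; your version therefore yields the smaller exponent $\gamma=\alpha_1 c_1/3$ in place of $2\alpha_1 c_1$, which is still all that the lemma and its downstream use require (and, for the same reason, your insistence on taking $c_1$ large is unnecessary---any positive $\gamma$ suffices).
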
 

\begin{proof} 
Since $P_z[V_j-U_j\ge a\vert \Fc_{U_j}]=P_{Y_{U_j}}[V\ge a]$
where $V$ means first entrance time into $B$, we shall bound
$P_x[V\ge a]$ below uniformly over 
\[
\Bigl\{ x\notin B:  \sum_{z\in B} P_z[Y_{U_1}=x] >0\,\Bigr\}. 
\]
Fix such an $x$ and an index $1\le j\le d$ such that $x^j\notin[-r,r]$. 
Since the coordinate $Y^j$ can move out of $[-r,r]$, this
coordinate is not degenerate, and hence by assumption 
(A.ii) the random walk $\Ybar^j$ is nondegenerate.  
  As before we work through 
the case $x^j>r$ because the argument for the other case
$x^j<-r$  is the same.

Let $\wbar=\inf\{n\ge 1: \Ybar^j_n\le r\}$ 
be the first time the one-dimensional random walk $\Ybar^j$
 enters the half-line 
$(-\infty, r]$.   If both $Y$ and $\Ybar$ start at $x$ and 
stay coupled together  until time $\wbar$, then $V\ge\wbar$.  This 
 way we bound $V$ from below.   Since the random walk
is symmetric and 
can be translated, we can move the origin to $x^j$ and use 
classic results about the first entrance  time
into the left half-line,  $\Tbar=\inf\{ n\ge 1: \Ybar^j_n<0\}$. 
Thus 
\[
P_{x^j}[\wbar\ge a]\ge P_{r+1}[\wbar\ge a]= P_0[\Tbar\ge a]
\ge \frac{\alpha_5}{\sqrt{a}}
\]
for a constant $\alpha_5$. 
The last inequality follows for  one-dimensional symmetric walks
 from basic random walk theory. For example, combine 
equation (7) on p.~185 of \cite{spitzer} with a Tauberian theorem 
such as Theorem 5 on p.~447 of Feller \cite{fellerII}.   Or see directly 
Theorem 1a on p.~415 of \cite{fellerII}.  

Now start both $Y$ and $\Ybar$ from $x$.
 Apply Lemma 
\ref{YY-aux-lm-1} and recall that  $r=c_1\log n$.   
\begin{align*}
P_x[V\ge a]&\ge P_{x,x}[V\ge a, \text{ $Y_k=\Ybar_k$ for $k=1,\dotsc,\wbar$ }]\\
&\ge P_{x,x}[\wbar\ge a, \text{ $Y_k=\Ybar_k$ for $k=1,\dotsc,\wbar$ }]\\
&\ge P_{x^j}[\wbar\ge a] - P_{x,x}[\text{ $Y_k\ne\Ybar_k$ for some
 $k\in\{1,\dotsc,\wbar\}$ }]\\
&\ge \frac{\alpha_5}{\sqrt{a}} - Cn^{-c_1\alpha_1}. 
\end{align*}
This gives a lower bound 
\[
P_x[V\ge a] \ge \frac{\alpha_5}{2\sqrt{a}} 
\]
if $a\le \alpha_5^2(2C)^{-2}n^{2c_1\alpha_1}$. 
This lower bound is independent 
of $x$.  We have proved the lemma. 
\end{proof}

We can assume that the random variables $\eta_j$ given by
the lemma satisfy $1\le \eta_j\le c_2n^\gamma$ and we can 
assume both $c_2,\gamma\le 1$ because this merely weakens the result.  
 For the renewal process
determined by $\{\eta_j\}$ write  
\[
S_0=0\, ,\; S_k=\sum_{j=1}^k \eta_j\,,
\quad\text{and}\quad  K(n)=\inf\{ k: S_k > n\}
\]
for the renewal times 
and the number of renewals up to time $n$ (counting the 
renewal $S_0=0$).  
Since the random variables are
bounded, Wald's identity gives 
\[  \Ev K(n) \cdot \Ev\eta = 
\Ev S_{K(n)} \le n+c_2n^\gamma \le 2n,
\]
while 
\[
\Ev\eta \ge \int_1^{c_2n^\gamma}  \frac{c_1}{\sqrt{s}}\,ds \ge c_3 
n^{\gamma/2}.
\]
Together these give 
\[ \Ev K(n) \le \frac{2n}{\Ev\eta} \le C_2n^{1-\gamma/2}. 
\]

Now we pick up the development from line \eqref{line-a7}. 
Since the negative of the function of $(V_j-U_j)_{1\le i\le n}$ 
in the expectation on line \eqref{line-a7} is nondecreasing, 
the stochastic domination of Lemma \ref{stoch-lm} gives
 an upper  bound of  \eqref{line-a7} in terms of the i.i.d.\ $\{\eta_j\}$.  
Then 
we use the renewal bound from  above.
\begin{align*} 
E_z\biggl[\, \sum_{i=0}^n   \one\{V_i\le n\}\biggr]
&\le E_z\biggl[\, \sum_{i=0}^n   \one\Bigl\{\,
\sum_{j=1}^i (V_j-U_{j})\le n\Bigr\}\biggr]\\
&\le \Ev\biggl[\, \sum_{i=0}^n   \one\Bigl\{\,
\sum_{j=1}^i \eta_j\le n\Bigr\}\biggr]
=  \Ev K(n) \le C_2n^{1-\gamma/2}.  
\end{align*}
Returning back to \eqref{temp-Y-2} to collect the bounds, we
have shown that 
\[
\sum_{k=0}^{n-1} P_z(Y_k\in B) \le 
C(\log n)^{m_1} E_z\biggl[\, \sum_{i=0}^n   \one\{V_i\le n\}\biggr] 
\le C(\log n)^{m_1} C_2n^{1-\gamma/2}
\]
and thereby verified \eqref{goal-Y-1}.












\bibliographystyle{abbrv} 
\bibliography{nnrefs}

\def\cprime{$'$} \def\cprime{$'$}
\begin{thebibliography}{10}

\bibitem{berg-zeit-07-}
N.~Berger and O.~Zeitouni.
\newblock A quenched invariance principle for certain ballistic random walks in
  i.i.d.\ environments.
\newblock \newline{\tt http://front.math.ucdavis.edu/math.PR/0702306}.

\bibitem{dynstat}
E.~Bolthausen and A.-S. Sznitman.
\newblock On the static and dynamic points of view for certain random walks in
  random environment.
\newblock {\em Methods Appl. Anal.}, 9(3):345--375, 2002.
\newblock Special issue dedicated to Daniel W.\ Stroock and Srinivasa S. R.\
  Varadhan on the occasion of their 60th birthday.

\bibitem{tenlectures}
E.~Bolthausen and A.-S. Sznitman.
\newblock {\em Ten lectures on random media}, volume~32 of {\em DMV Seminar}.
\newblock Birkh\"auser Verlag, Basel, 2002.

\bibitem{BK}
J.~Bricmont and A.~Kupiainen.
\newblock Random walks in asymmetric random environments.
\newblock {\em Comm. Math. Phys.}, 142(2):345--420, 1991.

\bibitem{DL}
Y.~Derriennic and M.~Lin.
\newblock The central limit theorem for {M}arkov chains started at a point.
\newblock {\em Probab. Theory Related Fields}, 125(1):73--76, 2003.

\bibitem{EK}
S.~N. Ethier and T.~G. Kurtz.
\newblock {\em Markov processes}.
\newblock Wiley Series in Probability and Mathematical Statistics: Probability
  and Mathematical Statistics. John Wiley \& Sons Inc., New York, 1986.
\newblock Characterization and convergence.

\bibitem{fellerII}
W.~Feller.
\newblock {\em An introduction to probability theory and its applications.
  {V}ol. {II}.}
\newblock Second edition. John Wiley \& Sons Inc., New York, 1971.

\bibitem{MW}
M.~Maxwell and M.~Woodroofe.
\newblock Central limit theorems for additive functionals of {M}arkov chains.
\newblock {\em Ann. Probab.}, 28(2):713--724, 2000.

\bibitem{qclt-spacetime}
F.~Rassoul-Agha and T.~Sepp{\"a}l{\"a}inen.
\newblock An almost sure invariance principle for random walks in a space-time
  random environment.
\newblock {\em Probab. Theory Related Fields}, 133(3):299--314, 2005.

\bibitem{forbidden-alea}
F.~Rassoul-Agha and T.~Sepp{\"a}l{\"a}inen.
\newblock Ballistic random walk in a random environment with a forbidden
  direction.
\newblock {\em ALEA Lat. Am. J. Probab. Math. Stat.}, 1:111--147 (electronic),
  2006.

\bibitem{forbidden-qclt}
F.~Rassoul-Agha and T.~Sepp{\"a}l{\"a}inen.
\newblock Quenched invariance principle for multidimensional ballistic random
  walk in a random environment with a forbidden direction.
\newblock {\em Ann. Probab.}, 35(1):1--31, 2007.

\bibitem{rosenblatt}
M.~Rosenblatt.
\newblock {\em Markov processes. Structure and asymptotic behavior}.
\newblock Springer-Verlag, New York, 1971.
\newblock Die Grundlehren der mathematischen Wissenschaften, Band 184.

\bibitem{spitzer}
F.~Spitzer.
\newblock {\em Principles of random walks}.
\newblock Springer-Verlag, New York, second edition, 1976.
\newblock Graduate Texts in Mathematics, Vol. 34.

\bibitem{slowdown+clt}
A.-S. Sznitman.
\newblock Slowdown estimates and central limit theorem for random walks in
  random environment.
\newblock {\em J. Eur. Math. Soc. (JEMS)}, 2(2):93--143, 2000.

\bibitem{rwrelectures}
A.-S. Sznitman.
\newblock Topics in random walks in random environment.
\newblock In {\em School and Conference on Probability Theory}, ICTP Lect.
  Notes, XVII, pages 203--266 (electronic). Abdus Salam Int. Cent. Theoret.
  Phys., Trieste, 2004.

\bibitem{szni-zeit-06}
A.-S. Sznitman and O.~Zeitouni.
\newblock An invariance principle for isotropic diffusions in random
  environment.
\newblock {\em Invent. Math.}, 164(3):455--567, 2006.

\bibitem{szlln}
A.-S. Sznitman and M.~Zerner.
\newblock A law of large numbers for random walks in random environment.
\newblock {\em Ann. Probab.}, 27(4):1851--1869, 1999.

\bibitem{stflour}
O.~Zeitouni.
\newblock {\em Random walks in random environments}, volume 1837 of {\em
  Lecture Notes in Mathematics}.
\newblock Springer-Verlag, Berlin, 2004.
\newblock Lectures from the 31st Summer School on Probability Theory held in
  Saint-Flour, July 8--25, 2001, Edited by Jean Picard.

\bibitem{zernerldp}
M.~P.~W. Zerner.
\newblock Lyapounov exponents and quenched large deviations for
  multidimensional random walk in random environment.
\newblock {\em Ann. Probab.}, 26(4):1446--1476, 1998.

\end{thebibliography}




%
%



\end{document}